\tikzset{
	>=stealth,
	every picture/.style={thick},
	graphs/every graph/.style={empty nodes},
}
\tikzstyle{vertex}=[
\tikzstyle{printersafe}=[decoration={snake,amplitude=0pt}]
\newcommand{\rank}{\operatorname{rank}}
\newcommand{\Spec}{\operatorname{Spec}}
\newcommand{\Aut}{\operatorname{Aut}}
\newcommand{\pp}{\mathbb{P}}
\newcommand{\qq}{\mathbb{Q}}
\newcommand{\zz}{\mathbb{Z}}
\newcommand{\nn}{\mathbb{N}}
\newcommand{\rr}{\mathbb{R}}
\newcommand{\kk}{\mathbb{K}}
\def\O#1.{\mathcal {O}_{#1}}			
\def\pr #1.{\mathbb P^{#1}}				
\def\af #1.{\mathbb A^{#1}}			
\def\ses#1.#2.#3.{0\to #1\to #2\to #3 \to 0}	
\def\xrar#1.{\xrightarrow{#1}}			
\def\K#1.{K_{#1}}						
\def\bA#1.{\mathbf{A}_{#1}}			
\def\bM#1.{\mathbf{M}_{#1}}				
\def\bL#1.{\mathbf{L}_{#1}}				
\def\bB#1.{\mathbf{B}_{#1}}				
\def\bK#1.{\mathbf{K}_{#1}}			
\def\subs#1.{_{#1}}					
\def\sups#1.{^{#1}}
\DeclareMathOperator{\Supp}{Supp}
\newtheorem{theorem}{Theorem}[section]
\newtheorem{introthm}{Theorem}
\newtheorem{lemma}[theorem]{Lemma}
\newtheorem{corollary}[theorem]{Corollary}
\newtheorem{notation}[theorem]{Notation}
\newtheorem{definition}[theorem]{Definition}
\newtheorem{question}[theorem]{Question}
\newtheorem{remark}[theorem]{Remark}
\theoremstyle{remark}
\numberwithin{equation}{section}
\begin{document}
	
	\title[Complexity One Varieties are Cluster Type]{Complexity One Varieties are Cluster Type}
	
	\author[J. Enwright]{Joshua Enwright}
	\address{Department of Mathematics, UCLA, Mathematical Sciences Building, 520 Portola Plaza, Los Angeles, 90095, USA}
	\email{jlenwright1@math.ucla.edu}
	
	\author[J. Li]{Jennifer Li}
	\address{Department of Mathematics, Princeton University,
		Fine Hall, 304 Washington Road, Princeton, NJ}
	\email{jenniferli@princeton.edu}
	
	\author[J.I.~Y\'a\~nez]{Jos\'e Ignacio Y\'a\~nez}
	\address{Departamento de Matem\'atica, Universidad T\'ecnica Federico Santa Mar\'ia, Avenida Espa\~na 1680, Valpara\'iso, Chile}
	\email{jose.yaneze@usm.cl}
	
	\keywords{Calabi--Yau pairs, complexity, toric geometry, cluster type varieties, Cox rings.}
	
	\subjclass[2020]{Primary: 14E30; Secondary: 14M25.}
	
	\begin{abstract}
		The complexity of a Calabi--Yau pair $(X,B)$ is an invariant that relates the dimension of $X$, 
		the rank of the group of divisors, and the coefficients of $B$. 
		If the complexity is less than one, then $X$ is a toric variety. 
		We prove that if the complexity is less than two, then $X$ is a Fano type variety.
		Furthermore, if the complexity is less than 3/2, then $X$ admits a Calabi--Yau structure of complexity one and index at most two, and it admits a finite cover 
		$Y \to X$ of degree at most 2, where $Y$ is a cluster type variety.
		In particular, if the complexity is one and the index is one, $(X,B)$ is cluster type.
		Finally, we establish a connection with the theory of $T$-varieties. We prove that a variety of $T$-complexity one admits a similar finite cover from a cluster type variety.
	\end{abstract}
	
	\maketitle
	
	\setcounter{tocdepth}{1} 
	\tableofcontents
	
	\section{Introduction}
	Throughout this article, we work over an algebraically closed field $\mathbb{K}$ of characteristic zero.\par
	Let $X$ be a normal projective variety. The \textit{complexity} of a log pair $(X,B)$ is defined as
	$$c(X,B)=\dim X +\rank {\rm WDiv}_{\rm alg}(X)- |B|,$$
	where ${\rm WDiv}_{\rm alg}(X)$ is the group of Weil divisors on $X$ modulo algebraic equivalence and $|B|$ is the sum of the coefficients of $B$.
	While not particularly well-behaved for arbitrary log pairs, the complexity enjoys remarkable properties when restricted to certain sub-classes of pairs. In ~\cite[Theorem 1.2, Corollary 1.3]{BMSZ18}, Brown, McKernan, Svaldi and Zong show the following:
	
	\begin{introthm}[{\cite[Theorem 1.2, Corollary 1.3]{BMSZ18}}]\label{introthm:BMSZ18}
		Let $(X,B)$ be a log canonical pair with $-(K_X+B)$ nef. Then $c(X,B)\geq 0.$ If $c(X,B)<1,$ then there is a toric log Calabi--Yau pair $(X,\Delta)$ with $\lfloor B \rfloor \leq \Delta$ and all but possibly $\lfloor 2c(X,B)\rfloor$ components of $\Delta$ lie in the support of $B.$
	\end{introthm}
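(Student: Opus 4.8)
The plan is to prove both assertions at once by induction on $\dim X$, running a minimal model program and controlling the complexity at every step. The base case $\dim X = 1$ is immediate: for a smooth projective curve, $-(K_X+B)$ nef gives $\deg(K_X+B)\le 0$, hence $|B|\le 2-2g(X)$, so $c(X,B)=2-|B|\ge 2g(X)\ge 0$; and $c(X,B)<1$ forces $g(X)=0$, i.e. $X=\mathbb{P}^1$, after which the required $\Delta$ is written down by hand.

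\emph{Reduction to a log Calabi--Yau pair.} First I would reduce to $K_X+B\equiv 0$. If $\Delta\ge B$ is a boundary with $(X,\Delta)$ log canonical and $K_X+\Delta\equiv 0$, then $c(X,\Delta)=\dim X+\rank\WDiv_{\mathrm{alg}}(X)-|\Delta|\le c(X,B)$, and the conclusion for $(X,B)$ follows from the one for $(X,\Delta)$ since $\lfloor B\rfloor\le\lfloor\Delta\rfloor\le\Delta$. Producing such a $\Delta$ from the hypothesis that $-(K_X+B)$ is merely nef (via a good model, or by perturbing among the components of $B$) is technical but is not the heart of the matter.

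\emph{Reduction to a Mori fibre space.} Next, pass to a $\mathbb{Q}$-factorial dlt modification $g\colon (X',\Delta')\to (X,\Delta)$, which is crepant. Extracting $e$ divisors raises $\rank\WDiv_{\mathrm{alg}}$ by $e$ and adds $e$ boundary components of coefficient $1$, so the complexity is unchanged, and $X'$ toric forces $X$ toric. Running a $K_{X'}$-MMP: as $K_{X'}+\Delta'\equiv 0$, every step is $\Delta'$-positive, so a divisorial contraction contracts a component $E\subseteq\Supp\Delta'$, dropping $\rank\WDiv_{\mathrm{alg}}$ by $1$ and $|\Delta'|$ by $\mathrm{coeff}_E(\Delta')\le 1$, while a flip changes neither; hence the complexity never increases. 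The program terminates with a Mori fibre space $f\colon Y\to Z$, $\dim Z<\dim Y$, $\rank\WDiv_{\mathrm{alg}}(Y)=\rank\WDiv_{\mathrm{alg}}(Z)+1$, and $c(Y,\Delta_Y)\le c(X,B)$ for the pushforward boundary $\Delta_Y$.

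\emph{Adjunction, induction, and the crux.} Write $\Delta_Y=\Delta_Y^h+\Delta_Y^v$ for the $f$-horizontal and $f$-vertical parts. Adjunction to a general fibre $F$ gives a log Calabi--Yau pair $(F,\Delta_F)$ whose boundary is assembled from the restrictions of the components of $\Delta_Y^h$, and the canonical bundle formula gives a generalized pair $(Z,\Delta_Z+M_Z)$ with $M_Z$ and $-(K_Z+\Delta_Z+M_Z)$ nef, where $\Delta_Z$ encodes $\Delta_Y^v$. A careful accounting of how boundary coefficients and N\'eron--Severi ranks transform under adjunction and the canonical bundle formula shows that $(F,\Delta_F)$ and $(Z,\Delta_Z)$ each have complexity $<1$ (resp.\ $\ge 0$), so by induction $F$ is toric and $Z$ carries a toric log Calabi--Yau structure. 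The remaining step, which I expect to be the main obstacle, is to glue: the torus acting on the general fibre spreads out over a dense open subset of $Z$ and --- precisely because $c(Y,\Delta_Y)<1$ leaves no slack for boundary components to disappear in the adjunction bookkeeping --- globalizes, together with the torus of $Z$, to a faithful action of a torus of dimension $\dim Y$ with a dense orbit; the union of the boundary divisors, completed by at most $\lfloor 2c(X,B)\rfloor$ additional invariant divisors, is then the toric boundary $\Delta$. Carrying this out --- spreading the fibrewise torus action over an open subset of $Z$, extending it across the singular fibres, verifying faithfulness and the orbit dimension, and tracking exactly which invariant divisors lie in $\Supp B$, all from the inductive control of only the general fibre and the base --- is where the genuine difficulty lies, every earlier step being bookkeeping around it.
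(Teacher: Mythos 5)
First, note that this statement is not proved in the paper at all: it is quoted verbatim from Brown--McKernan--Svaldi--Zong \cite[Theorem 1.2, Corollary 1.3]{BMSZ18} and used as a black box, so your attempt can only be measured against the strategy of that cited work, whose proof is long and delicate.

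Your outline reproduces the expected skeleton (reduce to a log Calabi--Yau pair, pass to a dlt modification, run an MMP to a Mori fibre space, apply adjunction to the general fibre and the canonical bundle formula to the base, induct on dimension), but it has a genuine gap exactly where the theorem is hard, and you say so yourself: the step in which the fibrewise torus and the torus on the base are glued to a torus action on $Y$ with dense orbit, and --- more seriously --- the step in which the toric structure is transported \emph{back} from the Mori fibre space output through the MMP and the dlt modification to the original $X$, are asserted rather than proved. The second of these is not bookkeeping: a crepant birational modification of a toric log Calabi--Yau pair which only extracts log canonical places need \emph{not} be toric (this failure is precisely why the class of cluster type pairs in the present paper is strictly larger than the toric class), so ``$X'$ toric forces $X$ toric'' and the implicit lifting of toricity along the inverse MMP steps require the quantitative input of $c<1$ in an essential way; in \cite{BMSZ18} this is where most of the work lies. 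Two smaller problems: (a) your reduction to $K_X+\Delta\equiv 0$ with $\Delta\geq B$ only yields that all but $\lfloor 2c(X,\Delta)\rfloor$ components of the toric boundary lie in $\Supp\Delta$, which is weaker than the required statement about $\Supp B$, since the new components of $\Delta$ produced from sections of $-(K_X+B)$ need not be supported on $B$; (b) the claim that a $K$-negative, $\Delta'$-positive divisorial contraction must contract a component of $\Supp\Delta'$ is unjustified (only some component has positive intersection with the ray), though this is harmless because the complexity drops by $1-\mathrm{coeff}_E(\Delta')\geq 0$ in any case. As it stands the proposal is a reasonable plan for rediscovering the BMSZ argument, not a proof: the torus-globalization and descent steps, which you defer, are the theorem.
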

	
	Toric varieties occupy an interesting role in algebraic geometry. While they are, in some sense, extremely rare, they provide one of the most useful classes of varieties for hypothesis testing since they admit combinatorial descriptions that allow for explicit computation. Theorem ~\ref{introthm:BMSZ18} provides a useful criterion for detecting toric varieties in practice using only properties of log pairs. In particular, in ~\cite{EF24}, the first author and Figueroa show that if $c(X,B) = 0$, then $B$ is a weighted average of toric boundaries $\Delta_1,\ldots, \Delta_r$, corresponding to different choices of maximal tori in $\Aut(X)$. 
	
	Our first result generalizes Theorem \ref{introthm:BMSZ18} and~\cite[Lemma 2.4.3 and Theorem 3.2]{BMSZ18} to the case $c(X,B) < 2.$
	
	\begin{introthm}~\label{introthm:compl-less-than-2}
		Let $(X,B)$ be a log canonical pair with $-(K_X+B)$ nef and $c(X,B)<2.$ Then X is Fano type and $\Spec {\rm Cox}(X)$ has at worst a compound Du Val singularity at its vertex. If, moreover, $K_X+B\sim_{\rr}0,$ then the components of $B$ generate ${\rm Cl}(X)_\qq.$
	\end{introthm}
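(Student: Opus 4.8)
The plan is to establish the three assertions in turn — Fano type, then the compound Du Val statement, then the generation statement — reducing each to a situation already under control and using throughout that the complexity does not increase under the relevant geometric operations: passage to a small modification, adjunction to a component of $\lfloor B\rfloor$, restriction to the general fibre of a Mori fibre space, and restriction to a general hyperplane through the vertex of $\Spec\Cox(X)$. After replacing $X$ by a small $\qq$-factorialization, which changes none of the hypotheses, the complexity, $\Cl(X)_\qq$ or $\Cox(X)$, I would assume $X$ is $\qq$-factorial and run the d\'evissage of \cite{BMSZ18} with the threshold $1$ replaced by $2$. If $\lfloor B\rfloor$ contains a prime divisor $S$, adjunction gives a log canonical pair $(S,B_S)$ with $-(K_S+B_S)$ nef and $c(S,B_S)\le c(X,B)<2$ by the (strengthened) subadditivity of complexity under adjunction; induction on dimension yields that $S$ is of Fano type, and this promotes to $X$ because $S\subseteq\lfloor B\rfloor$ is a nonzero reduced Cartier divisor with $-(K_X+B)$ nef. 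If $\lfloor B\rfloor=0$, then $(X,B)$ is klt and, since Fano type is preserved by the steps of a $K_X$-MMP, it suffices to treat the resulting Mori fibre space $X'\to Z$: one passes to a general fibre $F$ and to the base $Z$, keeping both induced pairs in the range $c<2$ by the fibration version of the complexity inequality, and concludes by induction together with the toric base cases supplied by Theorem \ref{introthm:BMSZ18}. I expect this complexity bookkeeping to be the first main obstacle: the numerical inequalities in \cite{BMSZ18} are calibrated to the value $1$, where the geometry is essentially rigid, and they must be re-derived with one extra unit of slack so that no step escapes the class $c<2$; the klt case, where adjunction is unavailable, is precisely what forces the MMP.

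Granting that $X$ is of Fano type, it is a Mori dream space, so $\Cox(X)$ is finitely generated, $\widehat X:=\Spec\Cox(X)$ is a normal affine variety with at worst klt singularities, $K_{\widehat X}\sim_\qq 0$, and $\widehat X$ carries an action of the torus $T=\Hom(\Cl(X),\G_m)$ with a unique fixed point, the vertex $o$. Moreover $X$ is rationally connected, so $\Cl(X)$ is finitely generated and algebraic and $\qq$-linear equivalence agree on $\qq$-Weil divisors; hence $\rank\WDiv_{\alg}(X)=\rank\Cl(X)$ and $\dim\widehat X=\dim X+\rank\Cl(X)=c(X,B)+|B|$. For the remaining two assertions we may add to $B$ a general effective $\qq$-divisor $D\sim_\qq-(K_X+B)$ — legitimate now that $-(K_X+B)$ is semiample — staying log canonical and only decreasing the complexity, and so assume $K_X+B\sim_\qq 0$, which is also the hypothesis of the last clause. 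The canonical sections of the components $B_1,\dots,B_k$ of $B$ then lie in the maximal graded ideal of $\Cox(X)$, so their divisors $\widehat B_1,\dots,\widehat B_k$ all pass through $o$; since each coefficient of $B$ is at most $1$, $k\ge|B|=\dim\widehat X-c(X,B)>\dim\widehat X-2$, so at least $\dim\widehat X-1$ of the divisors $\widehat B_i$ pass through $o$, and $(\widehat X,\widehat B)$ with $\widehat B=\sum b_i\widehat B_i$ is log canonical with $K_{\widehat X}+\widehat B\sim_\qq 0$.

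The compound Du Val statement thus reduces to a local variant of Theorem \ref{introthm:BMSZ18}: a point of a log Calabi--Yau pair at which the local complexity is $<2$ is either smooth or a compound Du Val singularity — the threshold $1$ recovering smoothness of $\widehat X$ at $o$, equivalently that $\Cox(X)$ is a polynomial ring and $X$ is toric. I would prove this by induction on dimension, slicing $\widehat X$ by a general hyperplane through $o$: one must check that the sliced pair is again log canonical and log Calabi--Yau and that the local complexity stays below $2$ (indeed drops), and then the surface germ is the base case, where the $\ge\dim\widehat X-1$ boundary components through the point together with $K+\widehat B\sim_\qq 0$ pin down a Du Val equation. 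The second main obstacle is exactly this slicing: the pencil of hyperplanes through $o$ has $o$ as a base point, so naive Bertini controls the section only away from $o$, and one must show the local complexity — in particular the behaviour of the local class group — is not spoiled at the vertex; this is the local counterpart of the bookkeeping in the first step.

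Finally, for generation of $\Cl(X)_\qq$ by $[B_1],\dots,[B_k]$ when $K_X+B\sim_\qq 0$: if these classes spanned a proper subspace, then a positive-dimensional subtorus of $T$ would fix each $\widehat B_i$, hence $\widehat B$, as a set — impossible in the compound Du Val normal form at $o$, where, up to the deformation parameters, the $\widehat B_i$ are coordinate hyperplanes and their number $k\ge\dim\widehat X-1$ leaves no room for such a symmetry. Making this precise reduces to a short rank computation in the normal form, the only delicate point being the torsion of $\Cl(X)$; I would expect this to follow formally once the previous steps are in place.
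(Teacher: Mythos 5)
Your plan for the Fano type statement has two unjustified steps that are, in fact, the entire content of the result. First, the ``strengthened subadditivity of complexity under adjunction,'' $c(S,B_S)\le c(X,B)$ for a component $S\subseteq\lfloor B\rfloor$, is not available for the complexity $c$ of Definition~\ref{defn:complexity}: $\rank {\rm WDiv}_{\rm alg}(S)$ can be arbitrarily larger than $\rank {\rm WDiv}_{\rm alg}(X)$ (already for a surface in a threefold of Picard rank one), and nothing in the adjunction formula compensates. This is precisely why \cite{BMSZ18} run their induction with the fine complexity of a \emph{chosen decomposition}, which restricts well; even there the inequalities are calibrated to the threshold $1$, and your remark that they ``must be re-derived with one extra unit of slack'' concedes the main point without supplying it. Second, the ``promotion'' step --- $S\subseteq\lfloor B\rfloor$ of Fano type and $-(K_X+B)$ merely nef implies $X$ of Fano type --- is not a standard implication (with nef but not big anticanonical one cannot lift complements by vanishing), and you give no argument. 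The paper avoids both issues entirely: after a dlt modification it produces, via \cite[Corollary 4.2, Lemma 5.1]{BMSZ18}, a klt pair $(X,B'+A)$ with $K_X+B'+A\sim_\qq N\ge 0$ of numerical dimension zero, runs a log terminal model contracting only components of $N$, and uses Lemma~\ref{lem:complexity-and-exceptional-divisors} together with \cite[Corollary 1.3]{BMSZ18} to force $N$ irreducible and the image to have complexity $<1$, so that the threshold-one results of \cite{BMSZ18} finish the argument; no new adjunction induction or Mori fibre space dichotomy is needed.

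The other two parts also stop short of proofs. For the compound Du Val statement, your reduction to a local statement on $\Spec\,{\rm Cox}(X)$ with at least $\dim\overline{X}-1$ boundary components through the vertex is in the right spirit, but the ``local variant of Theorem~\ref{introthm:BMSZ18}'' and the hyperplane slicing through the vertex are exactly the unproven core, and the obstacle you flag (Bertini fails at the base point; control of the local class group) is left unresolved; the paper gets this directly from the proof of \cite[Lemma 2.4.3]{BMSZ18}. For generation of $\Cl(X)_\qq$, your dichotomy is off: each $\widehat{B}_i$ is cut out by a $\Cl(X)$-homogeneous section, so it is invariant under the \emph{whole} characteristic quasitorus regardless of whether the classes $[B_i]$ span; what a proper subspace gives is a subtorus acting with weight zero on those sections, and the asserted contradiction with the cA/cDV normal form presumes the $\widehat{B}_i$ are coordinate hyperplanes, which is not established (the normal form of Lemma~\ref{lem:cA-cox} concerns a chosen coordinate system, not the given boundary). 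The paper instead notes that $c(X,B)<2$ forces $|B|>\dim X$ when $\rho(X)>1$, invokes \cite[Lemma 3.3]{BMSZ18} to handle the one-ray case, and then applies the proof of \cite[Theorem 3.2]{BMSZ18} verbatim. As it stands, each of the three assertions rests on a step that is either false as stated or explicitly deferred, so the proposal does not constitute a proof.
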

	
	We define the \textit{absolute complexity} of $X$ to be the quantity $$\widehat{c}(X)=\inf\{c(X,B)\mid (X,B) \text{ is log Calabi--Yau}\}.$$
	A consequence of Theorem ~\ref{introthm:BMSZ18} is that $\widehat{c}(X)\geq 0$ for all projective varieties $X,$ with $\widehat{c}(X)<1$ holding if and only if $X$ is toric. In fact, if $X$ is a projective toric variety and $B$ its reduced toric boundary, then $c(X,B)=0$ as both $\dim X +\rank {\rm WDiv}_{\rm alg}(X)$ and $|B|$ can be identified with the number of rays in the fan of $X.$ Thus, $\widehat{c}(X)<1$ if and only if $\widehat{c}(X)=0.$ We provide analogues to these results for higher values of the absolute complexity. In particular, we prove the following:
	\begin{introthm}\label{introthm:abs-compl-cA}
		Let $X$ be a normal projective variety. Then the following are equivalent:
		\begin{enumerate}
			\item $\widehat{c}(X)=1,$
			\item $\widehat{c}(X)\in \left[1,\frac{3}{2}\right),$
			\item $X$ is a non-toric variety of Fano type, and $\Spec {\rm Cox}(X)$ has a cA-type singularity at its vertex.
		\end{enumerate}
		If these equivalent conditions hold, then there is a reduced divisor $B$ on $X$ such that $(X,B)$ is a log Calabi--Yau pair of complexity one and index at most two.
	\end{introthm}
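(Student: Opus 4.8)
The plan is to prove the chain of implications $(3)\Rightarrow(1)\Rightarrow(2)\Rightarrow(3)$, together with the final assertion, leveraging Theorem \ref{introthm:compl-less-than-2} as the main input. First, $(1)\Rightarrow(2)$ is trivial. For $(2)\Rightarrow(3)$: if $\widehat c(X)\in[1,3/2)$, then in particular $\widehat c(X)<2$, so by (a limiting/infimum version of) Theorem \ref{introthm:compl-less-than-2}, $X$ is of Fano type and $\Spec\Cox(X)$ has a compound Du Val singularity at its vertex; moreover $\widehat c(X)\geq 1$ forces $X$ to be non-toric by the toric characterization recalled in the excerpt. The content is to upgrade "compound Du Val" to "cA-type". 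Here I would argue that a cD- or cE-type singularity would force the complexity to be at least $3/2$ (or that such a Cox ring cannot be generated as economically as the complexity bound $c(X,B)<3/2$ demands): roughly, the number of generators of $\Cox(X)$ and their relations are controlled by $\dim X + \rank\Cl(X)$ versus $|B|$, and a cD/cE singularity at the vertex imposes an extra generator or relation that bumps the complexity up past $3/2$. This is where I would need a careful local analysis of the grading on $\Cox(X)$ near the vertex, probably comparing with the $cA_n$ normal form $xy = f(z,w)$.

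For $(3)\Rightarrow(1)$, and the final statement, the strategy is constructive. Assuming $X$ is non-toric Fano type with $\Spec\Cox(X)$ having a cA-type singularity at the vertex, I would produce an explicit reduced divisor $B$ with $(X,B)$ log Calabi--Yau, $K_X+B\sim_\qq 0$, and $c(X,B)=1$. The natural candidate for $B$ is (the strict transform/pushforward of) the divisor on $X$ cut out by the toric-like coordinates coming from a maximal torus action, or equivalently the divisor corresponding to the vanishing loci of the Cox ring generators appearing in the cA relation $xy=f(z,w)$. Since $\Spec\Cox(X)$ is $cA_n$, its minimal resolution / the structure of its defining equation gives exactly one "extra" unit of complexity compared to the toric (smooth, i.e. $cA_0$) case, which should translate into $c(X,B)=1$ rather than $0$. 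I would then check $\widehat c(X)\geq 1$ (it cannot be $<1$ since $X$ is non-toric) so that combined with the constructed pair $\widehat c(X)=1$, giving $(1)$ and simultaneously the existence clause. The index-at-most-two claim should come from analyzing $K_X+B$: the cA singularity is Gorenstein canonical, and the discrepancy/different computation on the boundary shows $2(K_X+B)\sim 0$; the index is exactly one precisely when the relevant double cover is trivial.

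The main obstacle I anticipate is the implication $(2)\Rightarrow(3)$, specifically ruling out cD and cE type singularities when $\widehat c(X)<3/2$. Theorem \ref{introthm:compl-less-than-2} only gives "compound Du Val" in the range $c<2$, and the cutoff $3/2$ is delicate; I expect one must track exactly how the complexity deficit $2 - c(X,B)$ bounds the "excess" in the number of boundary components versus Picard rank, then translate this into a bound on the embedding dimension or on the degree of the hypersurface singularity $\Spec\Cox(X)$. A cA-type singularity is a hypersurface singularity whose tangent cone has rank-$\leq 2$ quadratic part that can be taken to be $xy$, and I would need to show that anything with rank-$\geq 3$ quadratic part (cD, cE) forces at least one more boundary component to be "lost", pushing $c$ up to at least $3/2$. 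A secondary technical point is justifying the passage from the finitely-many-$B$ statement of Theorem \ref{introthm:compl-less-than-2} to a statement about the infimum $\widehat c(X)$ — one must verify the infimum is attained, or at least that the structural conclusions (Fano type, cDV vertex) are insensitive to taking infima, which should follow from boundedness of the relevant class of pairs once $\widehat c(X)$ is bounded.
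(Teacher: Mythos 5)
Your skeleton matches the paper's route ($(1)\Rightarrow(2)$ trivial, $(2)\Rightarrow(3)$ via Theorem~\ref{introthm:compl-less-than-2}, $(3)\Rightarrow(1)$ by constructing a pair from the cA normal form of the Cox ring), but both of the substantive steps are left as acknowledged gaps rather than proved. For $(2)\Rightarrow(3)$, the upgrade from ``compound Du Val'' to ``cA-type'' is exactly the point, and your heuristic that a cD/cE vertex ``imposes an extra generator or relation'' is not an argument: cA versus cD/cE is about the rank of the quadratic part of the single defining equation of the hypersurface singularity, not about the number of generators of $\Cox(X)$, and nothing in your sketch converts the bound $c(X,B)<\tfrac32$ into a rank-two quadratic part. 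The paper does not reprove this; it invokes \cite[Lemma~2.4.3]{BMSZ18}, whose proof is precisely the mechanism tying the complexity bound to the quadratic rank of the equation of $\Spec\Cox(X)$ at the vertex. Also note that your worry about whether the infimum $\widehat{c}(X)$ is attained is a non-issue: one simply picks any log Calabi--Yau pair $(X,B)$ with $c(X,B)<\tfrac32$, which exists by the definition of the infimum, and applies Theorem~\ref{introthm:compl-less-than-2} to it.

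For $(3)\Rightarrow(1)$ and the final clause, your proposal names a candidate boundary but never verifies the one thing that must be checked, namely that the resulting pair is log canonical, nor does it pin down the complexity count or the index. The paper's argument is: write $\Cox(X)=\kk[x_1,\dots,x_{d+1}]/f$ in the normal form of Lemma~\ref{lem:cA-cox}, realize $X=\dv_T(f)$ as a hypersurface in a projective toric variety $T$ with boundary divisors $D_i=\dv_T(x_i)$; prove that $(\overline{T},\overline{X}+\sum_{i\ge 3}\overline{D}_i)$ is log canonical by inversion of adjunction down to the nodal plane curve $\{f(x_1,x_2,0,\dots,0)=0\}\subset\mathbb{A}^2$ at the origin, spread this out using $H_X$-invariance, and descend to $(T,X+\sum_{i\ge 3}D_i)$ by Lemma~\ref{lem:check-sings-cox}; then use homogeneity of the monomial $x_1x_2$ (or $x_1^2+x_2^2$) to get $2X\sim 2(D_1+D_2)$, hence $2(K_T+X+\sum_{i\ge 3}D_i)\sim 0$, and perform adjunction to $X$. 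This simultaneously gives the index bound (your appeal to ``Gorenstein canonical'' plays no role; the index~$2$ case arises precisely from the second normal form with $2$-torsion degree difference) and the complexity: $B$ has at least $d-1$ coefficient-one components with $d=\dim X+\rho(X)$, so $c(X,B)\le 1$, and non-toricity forces equality. Without the log canonicity verification and this explicit count, your ``one extra unit of complexity'' remains a plausibility statement, so the proposal as written has genuine gaps at both key steps.
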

	
	We prove that $\frac{3}{2}$ is, indeed, the next value taken by the absolute complexity after the value $1.$
	
	\begin{introthm}\label{introthm:abs-compl-3/2}
		For every $n\geq 3,$ there is a Fano $n$-fold with ${\rm Cl}(X)=\zz$ such that $\widehat{c}(X)=\frac{3}{2}.$
	\end{introthm}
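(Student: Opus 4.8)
The plan is to produce, for each $n\ge 3$, an explicit Fano $n$-fold $X$ with $\mathrm{Cl}(X)=\mathbb{Z}$ whose Cox ring has a compound Du Val singularity at its vertex that is \emph{not} of cA-type. By Theorem~\ref{introthm:abs-compl-cA} this rules out $\widehat{c}(X)\in[1,\tfrac32)$, and since $X$ is Fano but not toric we will also have $\widehat{c}(X)\ge 1$ (Theorem~\ref{introthm:BMSZ18}); hence $\widehat{c}(X)\ge\tfrac32$. A single explicit log Calabi--Yau boundary of complexity $\tfrac32$ then gives the reverse inequality. Concretely, I would take
\[
X \;=\; \bigl\{\,x^2 + y_1^3+\cdots+y_n^3 + u^6 = 0\,\bigr\}\;\subset\;\mathbb{P}\bigl(3,\underbrace{2,\dots,2}_{n},1\bigr),
\]
with $x$ of weight $3$, each $y_i$ of weight $2$, and $u$ of weight $1$. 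First I would record the standard facts: this weighted projective space is well-formed; $X$ is quasismooth (the only common zero of the partials is the origin) and is not a linear cone; hence $X$ is a normal projective variety with only cyclic quotient — in particular canonical, so klt — singularities, $\mathrm{Cl}(X)\cong\mathrm{Cl}\bigl(\mathbb{P}(3,2^{n},1)\bigr)=\mathbb{Z}$ since $\dim X=n\ge 3$, $\Cox(X)=\mathbb{K}[x,y_1,\dots,y_n,u]/(x^2+\sum y_i^3+u^6)$, and $-K_X=\mathcal{O}_X\bigl((3+2n+1)-6\bigr)=\mathcal{O}_X(2n-2)$ is ample, so $X$ is Fano and not toric (its Cox ring is not polynomial). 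The weight-$1$ variable $u$ is essential: omitting it, the analogue $\{x^2+\sum y_i^3=0\}\subset\mathbb{P}(3,2^{\,n+1})$ is isomorphic to $\mathbb{P}^{n}$ (the quadratic appearance of $x$ collapses the would-be double cover), so it would be toric.

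Next I would identify the vertex singularity of $\Spec\Cox(X)=\{x^2+\sum y_i^3+u^6=0\}\subset\mathbb{A}^{n+2}$. Its lowest-degree part is $x^2$, so the singularity has multiplicity $2$ and quadratic part of rank $1$; its leading terms are $x^2 + C(y_1,\dots,y_n)$ with $C=\sum y_i^3$ a cubic whose restriction to a general $2$-plane is a binary cubic with three distinct roots. Iterating general hyperplane sections down to a surface therefore yields $x^2 + (\text{binary cubic with distinct roots})$, i.e. a $D_4$ Du Val singularity, and every intermediate section is compound Du Val of the appropriate dimension. Thus $\Spec\Cox(X)$ has a $cD_4$ singularity at its vertex, and since cA-type singularities have quadratic part of rank at least $2$, this is not of cA-type. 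By Theorem~\ref{introthm:abs-compl-cA}, condition~(3) fails for the non-toric Fano type variety $X$, so $\widehat{c}(X)\notin[1,\tfrac32)$, whence $\widehat{c}(X)\ge\tfrac32$.

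For the upper bound I would exhibit a matching boundary. Since $\mathrm{Cl}(X)=\mathbb{Z}$ we have $\rank\WDiv_{\mathrm{alg}}(X)=1$, so $c(X,B)=n+1-|B|$ for every boundary, and it suffices to find an effective $\mathbb{Q}$-divisor $B$ with $(X,B)$ log canonical, $K_X+B\sim 0$, and $|B|=n-\tfrac12$. Writing $D_u=\{u=0\}\cap X\sim\mathcal{O}_X(1)$ and $D_i=\{y_i=0\}\cap X\sim\mathcal{O}_X(2)$, set
\[
B \;=\; D_u + D_1 + \cdots + D_{n-2} + \tfrac12\,D_{n-1}.
\]
Then $|B|=(n-1)+\tfrac12=n-\tfrac12$, and the class of $B$ is $1+2(n-2)+1=2n-2$, so $B\sim-K_X$ and $K_X+B\sim 0$. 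Log canonicity is a local statement: the $D_u,D_i$ are restrictions of distinct toric boundary divisors of $\mathbb{P}(3,2^{n},1)$ and $X$ is quasismooth, so away from the lower-dimensional quotient-singular locus the pair $(X,B)$ is simple normal crossing, and along that locus it is a toric configuration with all coefficients $\le 1$ and at most $\dim X$ components through a point, the half-coefficient component only improving singularities. Hence $(X,B)$ is log canonical, $c(X,B)=n+1-(n-\tfrac12)=\tfrac32$, so $\widehat{c}(X)\le\tfrac32$ and therefore $\widehat{c}(X)=\tfrac32$.

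The delicate step is the construction: one must choose the weights so that, simultaneously, the ambient weighted projective space is well-formed (so that $\mathrm{Cl}(X)=\mathbb{Z}$ and $\Cox(X)$ is the naive hypersurface ring), the vertex singularity of $\Cox(X)$ is a genuine non-cA compound Du Val point, and no isomorphism of ambient weighted projective spaces secretly replaces $X$ by a simpler variety or the singularity by a $cA$ or non-Du-Val point — this is exactly the phenomenon that kills the first candidates one writes down (e.g. $\{x^2+\sum y_i^3=0\}\subset\mathbb{P}(3,2^{n+1})\cong\mathbb{P}^n$, or variants whose well-formed model exposes a second squared variable, turning $cD_4$ into $cA$). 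Once the weights are arranged as above, the remaining work — a leading-term analysis for the singularity type and a toric/discrepancy computation for log canonicity of $(X,B)$ — is routine.
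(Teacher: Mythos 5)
Your proposal is correct and follows the same overall strategy as the paper: produce a Fano $X$ with $\Cl(X)=\zz$ whose Cox ring has a hypersurface vertex singularity with quadratic part of rank one (hence not cA-type), use Theorem~\ref{introthm:abs-compl-cA} together with Theorem~\ref{introthm:BMSZ18} to get $\widehat{c}(X)\geq\frac32$, and then exhibit one explicit boundary with a single coefficient $\frac12$ to get $\widehat{c}(X)\leq\frac32$. What differs is the realization. The paper works with the bespoke graded ring $\kk[x,y,z,w_1,\dots,w_{n-1}]/(x^2-y^2z+4z^3+y^2w_1+\sum w_i^6)$, proves by hand that it is a UFD (its $\Spec$ has non-isolated singularities, so this is not automatic) and invokes \cite[Theorem 3.2.1.4]{ADHL15} to identify it with $\Cox(X)$, while you take the Fermat-type sextic $X_6\subset\pp(3,2^n,1)$, where quasismoothness gives an isolated cone singularity and the identifications $\Cl(X)=\zz$, $\Cox(X)=\kk[x,y_1,\dots,y_n,u]/(f)$ follow from standard weighted-hypersurface facts (valid precisely because $n\geq 3$); this buys a cleaner construction and an immediately visible klt Fano structure. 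The one step you state too loosely is log canonicity of $(X,B)$: ``toric configuration with coefficients $\leq 1$'' is not an argument, and this is exactly where the paper spends its effort (Lemma~\ref{lem:check-sings-cox}, $H_X$-invariance to localize at the vertex, inversion of adjunction, and a quasi-\'etale cover of the resulting surface germ). Your claim is nonetheless true and can be completed either by the orbifold-chart analysis you gesture at (along $\{x=u=0\}$ the local model is a $\frac12(1,1,0,\dots,0)$ quotient with the boundary components becoming coordinate divisors, one of which is halved), or, closer to the paper, by descending to $\Spec\Cox(X)$ and cutting with the coefficient-one hyperplanes to reduce to the germ $\bigl(\{x^2+s^3+t^3=0\},\tfrac12\{s=0\}|\bigr)$; pulling back under the smooth cover $\mathbb{A}^2\rightarrow\mathbb{A}^2/Q_8$ of this $D_4$ point turns the boundary into $\tfrac12$ times four distinct lines through the origin, which is log canonical. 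With that verification written out, your example proves the theorem.
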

	
	In light of these results, it seems natural to ask what values the absolute complexity can take and through what ranges these values will be discrete. In this general direction we prove that, for varieties of Fano type, the infimum in the definition of absolute complexity is actually a minimum and is always a rational number.
	\begin{introthm}\label{introthm:abs-comp-is-min}
		Let $X$ be a Fano type variety. Then there exists a log Calabi--Yau pair $(X,B)$ with $B$ a $\qq$-divisor satisfying $c(X,B)=\widehat{c}(X).$ In particular, $\widehat{c}(X)\in \qq.$
	\end{introthm}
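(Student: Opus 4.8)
\emph{(Proof plan.)} Since $X$ is of Fano type it is a Mori dream space, so $\rank {\rm WDiv}_{\rm alg}(X)$ is finite; as both $\dim X$ and $\rank{\rm WDiv}_{\rm alg}(X)$ depend only on $X$, we have $\widehat{c}(X)=\dim X+\rank {\rm WDiv}_{\rm alg}(X)-s$, where $s=\sup\{|B| : (X,B)\text{ is log Calabi--Yau}\}$, and the whole point is to show that this supremum is attained. I would first record that $s<\infty$: the pseudoeffective cone $\overline{\mathrm{Eff}}(X)\subseteq {\rm WDiv}_{\rm alg}(X)_\qq$ is pointed and rational polyhedral, so a functional $h$ in the interior of its dual cone is bounded below by some $\varepsilon>0$ on the (discrete) set of nonzero prime divisor classes; for $B=\sum b_iB_i$ with $\sum b_i[B_i]=[-K_X]$ this gives $|B|=\sum b_i\le\varepsilon^{-1}h([-K_X])$. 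I would also note that the set of log Calabi--Yau structures on $X$ is nonempty, by applying boundedness of complements (below) to a klt boundary $\Delta$ with $-(K_X+\Delta)$ ample.

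The key input is Birkar's theorem on boundedness of complements: there is $N\in\zz_{>0}$, depending only on $\dim X$, such that every log Calabi--Yau pair $(X,B)$ admits an $N$-complement $(X,B^+)$ — a log canonical pair with $N(K_X+B^+)\sim 0$ that dominates $(X,B)$ and, in particular, satisfies $|B^+|\ge|B|$. Consequently $s$ equals the supremum of $|B^+|$ over the $N$-complements alone (each such pair is itself log Calabi--Yau, and every log Calabi--Yau pair is dominated by one). Each such $B^+$ has $NB^+$ an effective Weil divisor linearly equivalent to $-NK_X$, hence $NB^+$ varies in the finite-dimensional linear system $\mathbb P\big(H^0(X,\mathcal O_X(-NK_X))\big)$. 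Fixing an ample Cartier divisor $H$ on $X$, every prime component $D_j$ of $NB^+=\sum_j m_jD_j$ satisfies $D_j\cdot H^{\dim X-1}\ge1$, so $\sum_j m_j\le -NK_X\cdot H^{\dim X-1}=:M$; therefore $|B^+|=\tfrac1N\sum_j m_j$ takes only the finitely many values in $\tfrac1N\{0,1,\dots,M\}$. Its maximum over the (nonempty, closed) locus of $D\in|-NK_X|$ with $(X,\tfrac1N D)$ log canonical is thus attained by some $D_0$; then $B_0:=\tfrac1N D_0$ is a log Calabi--Yau pair with $|B_0|=s$, i.e. $c(X,B_0)=\widehat{c}(X)$.

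The main obstacle is the second step: invoking boundedness of complements in exactly the form needed — a \emph{uniform} $N$, together with the comparison $|B^+|\ge|B|$ — for arbitrary log canonical boundaries, whose coefficients need not lie in a prescribed finite or DCC set. This is where I would use the refined theory of bounded $\mathbb R$-complements for Fano type varieties (or first reduce the problem to boundaries with coefficients in a fixed DCC set, and then appeal to the $N$-complement statement). The remaining steps are routine manipulations with the pseudoeffective cone and with linear systems on $X$.
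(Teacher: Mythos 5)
Your first and last steps are fine: $s=\sup\{|B|\}$ is finite because the pseudoeffective cone of the Mori dream space $X$ is rational polyhedral, and once one restricts to boundaries $B$ with $NB\in|-NK_X|$ for a \emph{fixed} $N$, the norm $|B|$ takes only finitely many values, so its supremum over a nonempty set is attained. The genuine gap is the central reduction, and it is exactly where the difficulty of the theorem sits. First, note that for a pair which is already log Calabi--Yau there is no room to ``complete'' it: if $B^+\geq B$ and $N(K_X+B^+)\sim 0$, then $B^+-B$ is an effective divisor numerically equivalent to zero, hence $B^+=B$. So the statement you invoke --- a uniform $N$ such that every log Calabi--Yau pair $(X,B)$ is dominated by an $N$-complement --- is literally the assertion that every log Calabi--Yau structure on $X$ has index dividing $N$, which is false: already $(\mathbb{P}^2,\tfrac{3}{d}C_d)$ with $C_d$ a smooth curve of degree $d$ is log Calabi--Yau of index $d/\gcd(d,3)$. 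Second, Birkar's boundedness of complements requires coefficients in a hyperstandard set $\Phi(\mathfrak{R})$ (or a fixed DCC set), and even there the complement only satisfies $NB^+\geq N\lfloor B\rfloor+\lfloor (N+1)\{B\}\rfloor$, which discards every component of coefficient less than $1/(N+1)$. Hence even the weaker statement your argument really needs --- that $s$ is computed by boundaries of uniformly bounded index --- does not follow from complement theory: a minimizing sequence $(X,B_n)$ could a priori carry a fixed amount of its mass in many components of tiny coefficient with varying supports, and neither ``rounding to a DCC set'' (rounding down loses an uncontrolled amount of $|B_n|$; rounding up is impossible while keeping $K_X+B_n\equiv 0$) nor the $\mathbb{R}$-complement/uniform-polytope results control this. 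You flag this obstacle yourself, but the proposed fixes do not close it; what is missing is a mechanism that forces near-optimal boundaries to concentrate on a fixed finite set of divisors.

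That mechanism is precisely what the paper's proof supplies, by a quite different route: for a sequence $(X,B_n)$ with $c(X,B_n)\to\widehat c(X)$ it splits each $B_n$ into components that are $(m+1)$-very movable and the rest. If the very movable part carried total coefficient bounded away from $0$, trading such a component for a general member of a multiple of its linear system (which has at least $m+1$ base-point-free pieces) would produce a log Calabi--Yau pair of complexity strictly below $\widehat c(X)$, a contradiction; so that mass tends to zero. The remaining components lie among finitely many prime divisors (Lemma~\ref{lem:fin-m-mov}), so one may pass to a coefficient limit $\widetilde B$ there, and finally use that $X$ is Fano type and $-(K_X+\widetilde B)$ is movable to enlarge $\widetilde B$ to an actual log Calabi--Yau boundary $B\geq\widetilde B$ with $c(X,B)=\widehat c(X)$. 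Some analogue of this ``very movable mass goes to zero'' step would have to be added before your complement-based finiteness argument could start.
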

	
	Finally, we describe pairs $(X,B)$ of complexity one in terms of cluster type varieties 
	(see Definition~\ref{def:cluster-type}). Cluster type varieties are rational varieties that 
	generalize toric varieties and were introduced in~\cite{EFM24} by the first author, 
	Figueroa and Moraga. If $X$ is a $\qq$-factorial Fano variety and $(X,B)$ is a cluster type log Calabi--Yau pair of index one, then $X\setminus B$ is an affine variety covered up to codimension two by finitely many algebraic tori (\cite[Theorem 1.3(4)]{EFM24}, \cite[Theorem 23]{Cor23}). We show that a log Calabi--Yau pair $(X,B)$ with fine complexity $\overline{c}(X,B) \leq 1$ (see Definition \ref{def:fine-comp}) is a cluster type pair.
	
	\begin{introthm}\label{introthm:fine-compl-one-cluster-type}
		Let $(X,B)$ be a log Calabi--Yau pair of index one and $\overline{c}(X,B)\leq 1$ on a rationally connected variety $X.$ Then $(X,B)$ is cluster type.
	\end{introthm}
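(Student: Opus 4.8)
The plan is to reduce, through crepant birational modifications, to the toric case --- where the statement is known --- by extracting the cluster charts of $X\setminus\Supp B$ from a log canonical centre of the boundary and inducting on $\dim X$; throughout we use that being of cluster type is a crepant birational invariant of index-one log Calabi--Yau pairs~\cite{EFM24}. Since the index is one, $B$ is reduced, and after a small $\qq$-factorialization (and, where convenient, a dlt modification) we may assume $X$ is $\qq$-factorial. The hypothesis $\overline c(X,B)\le 1$ implies in particular $c(X,B)<2$, so Theorem~\ref{introthm:compl-less-than-2} applies: $X$ is of Fano type, $\Spec\Cox(X)$ has a compound Du Val singularity at its vertex, and --- because $K_X+B\sim 0$ --- the components of $B$ generate $\Cl(X)_\qq$. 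Moreover $\widehat c(X)\le\overline c(X,B)\le 1$; if $\widehat c(X)=0$ then $X$ is toric, a case handled directly since the bound $\overline c(X,B)\le 1$ forces $B$ to agree with the toric boundary outside at most one component, which a single mutation absorbs. So we may assume $\widehat c(X)=1$, whence Theorem~\ref{introthm:abs-compl-cA} shows $\Spec\Cox(X)$ is of cA-type at its vertex; this structure will constrain the combinatorics of $B$ in what follows.

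Now I would argue by induction on $\dim X$, the case $\dim X=1$ being immediate. If $c(X,B)<1$, then $X$ is toric by Theorem~\ref{introthm:BMSZ18}, with $B$ crepant birational to the toric boundary, so $(X,B)$ is of cluster type. If $c(X,B)=1$ --- equivalently, $B$ has exactly $\dim X+\rank\Cl(X)_\qq-1$ components --- the key claim is that this extremal situation, together with the requirement that the classes $[B_i]$ span $\Cl(X)_\qq$ and the cA-type structure of $\Spec\Cox(X)$, forces the existence of a log canonical centre $B_0$ of $(X,B)$ (an intersection of components of $B$) that admits an open neighborhood on which $(X,B)$ restricts to a toric pair. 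Adjunction then produces an index-one log Calabi--Yau pair $\bigl(B_0,\operatorname{Diff}_{B_0}(B-B_0)\bigr)$, for which one verifies $\overline c\bigl(B_0,\operatorname{Diff}_{B_0}(B-B_0)\bigr)\le\overline c(X,B)\le 1$; by the inductive hypothesis this pair is of cluster type.

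To close the induction, I would assemble the cluster structure of $(X,B)$ from that of $\bigl(B_0,\operatorname{Diff}_{B_0}(B-B_0)\bigr)$. Since $(X,B)$ is toric along $B_0$, blowing up $B_0$ is a crepant modification that, inside the toric neighborhood, is an ordinary star subdivision; combined with the appropriate birational contractions away from the exceptional locus this realizes the geometric counterpart of a cluster mutation. A cone-type argument then yields: a log Calabi--Yau pair that is toric along a log canonical centre $B_0$ with $\bigl(B_0,\operatorname{Diff}\bigr)$ of cluster type is itself of cluster type --- the cluster charts of $B_0$ spread out over the exceptional divisor, and together with the charts away from it cover $X\setminus\Supp B$ up to codimension two. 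Hence $(X,B)$ is of cluster type.

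The step I expect to be the main obstacle is the inductive claim: showing that $\overline c(X,B)\le 1$ genuinely produces a log canonical centre $B_0$ along which $(X,B)$ is \emph{toric}, rather than merely some centre, so that the blow-up above is a legitimate cluster mutation; and the accompanying numerical point that the fine complexity does not increase under adjunction to $B_0$, which is what makes the induction terminate. This is precisely where replacing the ordinary complexity --- which sees divisors only up to algebraic equivalence --- by the fine complexity, which records the boundary classes in $\Cl(X)_\qq$ exactly, is indispensable: the coarser bound $c(X,B)\le 1$ does not control the combinatorics needed, and part of the work is to show that the finer invariant rules out the boundary configurations obstructing a cluster structure. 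The remaining verifications --- that crepancy, log canonicity and the index-one property survive each blow-up, and that the charts cover $X\setminus\Supp B$ up to codimension two --- are routine.
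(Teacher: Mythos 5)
Your reduction at the start is numerically backwards. Since the tautological decomposition of $B$ shows $\overline{c}(X,B)\leq c(X,B)$ (the rank term can only drop), the hypothesis $\overline{c}(X,B)\leq 1$ gives \emph{no} upper bound on $c(X,B)$ or on $\widehat{c}(X)$; so your appeals to Theorem~\ref{introthm:compl-less-than-2} (via ``$c(X,B)<2$''), to the statement that the components of $B$ generate $\Cl(X)_\qq$, and to Theorem~\ref{introthm:abs-compl-cA} (via ``$\widehat{c}(X)\leq 1$'', cA-type vertex of $\Spec\Cox(X)$) are not justified by the hypothesis. One can still get that $X$ is of Fano type from the fine-complexity version of the argument in~\cite{BMSZ18}, but the cA-type and spanning statements you lean on to ``constrain the combinatorics of $B$'' are simply not available under $\overline{c}\leq 1$.

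More seriously, the two claims that carry your induction are asserted rather than proved, and they are exactly the content of the theorem: (a) that $\overline{c}(X,B)\leq 1$ produces a log canonical centre $B_0$ with a neighborhood on which $(X,B)$ is \emph{toric}, and (b) that a pair which is toric along such a centre, with $\bigl(B_0,\operatorname{Diff}_{B_0}(B-B_0)\bigr)$ of cluster type, is itself of cluster type (your ``cluster charts spread out'' / ``cone-type argument''). Neither is routine: cluster type requires an explicit crepant birational map from a toric log Calabi--Yau pair extracting only log canonical places of $(X,B)$, and no mechanism is given to build one from boundary data; likewise the monotonicity $\overline{c}\bigl(B_0,\operatorname{Diff}\bigr)\leq\overline{c}(X,B)$ under adjunction is nontrivial and unproved. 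You yourself flag (a) as the main obstacle, which means the proposal is an outline of a strategy, not a proof. The paper takes a different and self-contained route: by Lemma~\ref{lem:check-cluster-type-on-bir-model} it suffices to treat a crepant model $(X_0,B_0)$ extracting only log canonical places, which by~\cite[Theorem 5.6]{mor_con24} carries a tower of $\dim X-1$ Mori fiber spaces, all conic fibrations, ending in a curve; if $c(X_0,B_0)=1$, Lemma~\ref{lem:compl-one-horiz-divs} together with the canonical bundle formula (Lemma~\ref{lem:no-moduli-gen-finite-lcc}) propagates ``index one, complexity one'' down the tower to $\pp^1$, a contradiction, so $c(X_0,B_0)=0$ and $(X_0,B_0)$ is toric by Theorem~\ref{introthm:BMSZ18}. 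Unless you supply genuine proofs of (a), (b), and the adjunction inequality, your argument does not establish the theorem.
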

	
	Theorem \ref{introthm:fine-compl-one-cluster-type} fails if $X$ is not rationally connected. Indeed, let $E$ be a smooth curve of genus one. Then $(E,0)$ is a Calabi--Yau pair, and $\overline{c}(E,0) = 1,$ but this pair cannot be cluster type, as $E$ is not rational.
	
	The fine complexity $\overline{c}(X,B)$ of a pair $(X,B)$ is always bounded above by its complexity $c(X,B).$ In particular, since Fano type varieties are rationally connected ~\cite[Theorem 1]{Zha06}, it follows from Theorems ~\ref{introthm:compl-less-than-2} and ~\ref{introthm:fine-compl-one-cluster-type} that a log Calabi--Yau pair $(X,B)$ of index one and complexity one is cluster type. In this special case, we can provide an explicit description of the Cox ring of $X$ (see Lemma ~\ref{lem:cA-cox}). It is an example of a very simple \textit{Laurent phenomenon algebra} in the sense of ~\cite{LP16}. If, moreover, $X$ is $\mathbb{Q}$-factorial, then using Theorem ~\ref{introthm:compl-less-than-2} and arguing as in ~\cite{EFM24} one may show in this case that $X \setminus B$ is an affine variety covered up to codimension two by two algebraic tori. Following ~\cite[Example 2.9]{ducat24}, it follows that such an affine variety is also the spectrum of a Laurent phenomenon algebra. In \cite{MY24}, Moraga and the third author use cluster type varieties to describe log Calabi--Yau pairs of index one and complexity two.\par 
	If $\widehat{c}(X) = 1$, Theorem \ref{introthm:abs-compl-3/2} implies that there exists a reduced divisor $B$ on $X$ such that $(X,B)$ is log Calabi--Yau of index at most two, and $c(X,B) = 1$. We prove, in that case, that $X$ admits a finite morphism $Y\to X$ of degree at most 2, such that there exists a divisor $B_Y$ on $Y$ with $\overline{c}(Y,B_Y) \leq 1$. As another application of Theorem \ref{introthm:fine-compl-one-cluster-type}, we obtain the following result.
	
	\begin{introthm}\label{introthm:compl-one-quot-of-cluster-type}
		Let $X$ be a normal projective variety with $\widehat{c}(X)=1.$ Then there is a cluster type variety $Y$ and a surjective finite morphism $Y\rightarrow X$ of degree at most $2.$
	\end{introthm}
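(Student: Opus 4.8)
The plan is to reduce to Theorem~\ref{introthm:fine-compl-one-cluster-type} by passing to an index-one cover. By Theorem~\ref{introthm:abs-compl-cA}, the hypothesis $\widehat c(X)=1$ forces $X$ to be a non-toric variety of Fano type such that $\Spec\Cox(X)$ has a cA-type singularity at its vertex, and it produces a reduced divisor $B$ on $X$ for which $(X,B)$ is log Calabi--Yau of complexity one and index at most two. If $(X,B)$ has index one, then $\overline c(X,B)\le c(X,B)=1$, so Theorem~\ref{introthm:fine-compl-one-cluster-type} already shows that $(X,B)$ is cluster type; then $X$ itself is a cluster type variety and we may take $Y=X$ with the identity morphism, which has degree $1$.

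Assume now that $(X,B)$ has index exactly two, so that $\tau:=-(K_X+B)$ is a nonzero $2$-torsion class in $\Cl(X)$. Let $\pi\colon Y:=\Spec_X\bigl(\oo_X\oplus\oo_X(\tau)\bigr)\to X$ be the associated index-one cover. Since $X$ is klt (being of Fano type), $Y$ is a normal projective variety, $\pi$ is a finite surjective morphism of degree two which is \'etale in codimension one, and $Y$ is again of Fano type (the pullback of a boundary witnessing Fano type witnesses it on $Y$). Writing $B_Y:=\pi^{-1}(B)$ for the reduced preimage, which equals $\pi^*B$ since $\pi$ is quasi-\'etale, we get $K_Y+B_Y=\pi^*(K_X+B)\sim 0$, and $(Y,B_Y)$ is log canonical because discrepancies are unchanged under quasi-\'etale morphisms; hence $(Y,B_Y)$ is a log Calabi--Yau pair of index one. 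It now suffices to prove that $\overline c(Y,B_Y)\le 1$: granting this, Theorem~\ref{introthm:fine-compl-one-cluster-type} applies to $(Y,B_Y)$ and shows it is cluster type, so $Y$ is a cluster type variety and $\pi$ is the desired finite surjective morphism of degree $2$.

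To bound the fine complexity, I would compare Cox rings. Because $\pi$ is a quasi-\'etale $\zz/2$-quotient, pushing forward divisorial (reflexive rank one) sheaves along $\pi$ identifies $\Cox(Y)$ with $\Cox(X)$ as rings, the cover merely coarsening the grading from $\Cl(X)$ to $\Cl(Y)$; in particular $\Spec\Cox(Y)$ still carries a cA-type singularity, and comparing dimensions of these affine cones yields $\dim Y+\rank\Cl(Y)=\dim X+\rank\Cl(X)$, hence $\rank\Cl(Y)=\rank\Cl(X)$. As $Y$ is of Fano type this gives $\rank\WDiv_{\mathrm{alg}}(Y)=\rank\WDiv_{\mathrm{alg}}(X)$. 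Since every component of $B$ pulls back to at least one component of $B_Y$, we have $|B_Y|\ge|B|$, and therefore
\[
c(Y,B_Y)=\dim Y+\rank\WDiv_{\mathrm{alg}}(Y)-|B_Y|\le\dim X+\rank\WDiv_{\mathrm{alg}}(X)-|B|=c(X,B)=1,
\]
so that $\overline c(Y,B_Y)\le c(Y,B_Y)\le 1$, which finishes the argument.

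The step I expect to be the main obstacle is the control of $\rank\WDiv_{\mathrm{alg}}(Y)$, equivalently the claim that the (fine) complexity does not increase when passing to the index-one cover. A priori the class group can grow under a quasi-\'etale cover --- as happens already for the K3 double cover of an Enriques surface --- so one genuinely has to use that $Y$ is of Fano type together with the explicit cA structure of $\Spec\Cox(X)$ coming from Theorems~\ref{introthm:compl-less-than-2} and~\ref{introthm:abs-compl-cA}; concretely, this amounts to establishing the Cox-ring identification above (or, alternatively, the corresponding fine-complexity estimate for $(Y,B_Y)$) with care.
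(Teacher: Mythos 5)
Your overall architecture matches the paper's: use Theorem~\ref{introthm:abs-compl-cA} to get a reduced $(X,B)$ of complexity one and index at most two, handle the index-one case directly by Theorem~\ref{introthm:fine-compl-one-cluster-type}, and in the index-two case pass to the degree-two quasi-\'etale cover determined by the $2$-torsion class of $K_X+B$ (this is the same cover the paper builds, as the quotient of the characteristic space by the subgroup corresponding to $E=\deg(x_1)-\deg(x_2)$ in the presentation of Lemma~\ref{lem:cA-cox}), and then apply Theorem~\ref{introthm:fine-compl-one-cluster-type} to $(Y,B_Y=\pi^*B)$. However, there is a genuine gap at exactly the step you flag: your bound on $\overline c(Y,B_Y)$ rests on the asserted identification $\Cox(Y)\cong\Cox(X)$ and the resulting equality $\rank\WDiv_{\rm alg}(Y)=\rank\WDiv_{\rm alg}(X)$, and this is neither proved nor automatic. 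Pushing forward divisorial sheaves along $\pi$ only recovers the part of $\Cl(Y)$ pulled back from $X$ (the Galois-invariant part up to torsion); it does not rule out that the covering involution acts nontrivially on $\Cl(Y)_\qq$, i.e.\ that $Y$ carries genuinely new divisor classes. Concretely, in the paper's coordinates $Y$ is (the normalization of) the hypersurface $\{x_1x_2=g\}$ in the coarsened toric ambient $S$, and a hypersurface in a toric variety can have strictly larger class group than the ambient restriction -- the quadric in $\pp^3$ is the standard example -- so "$\Cox$ of the ambient modulo $f$" need not be the Cox ring of $Y$. Being of Fano type does not by itself prevent this, and nothing in your sketch uses the cA structure to exclude it.

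The paper sidesteps this issue entirely, and this is precisely why Theorem~\ref{introthm:fine-compl-one-cluster-type} is stated for the \emph{fine} complexity: instead of controlling $\Cl(Y)$, one bounds $\overline c(Y,B_Y)$ using the decomposition of $B_Y$ given by the pullbacks $\pi^*D_i$ of the components $D_i$ of $B$. Its norm is $|B|$, and its rank equals the rank of the span of the $D_i$ in $\WDiv_{\rm alg}(X)_\qq$ (pullback is injective rationally), which is $|B|+1-\dim X$ because, by Theorem~\ref{introthm:compl-less-than-2} applied to the log Calabi--Yau pair $(X,B)$ with $c(X,B)=1<2$, the components of $B$ generate $\Cl(X)_\qq$. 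Hence
\[
\overline c(Y,B_Y)\le \dim Y+(|B|+1-\dim X)-|B|=1,
\]
with no statement about $\rank\Cl(Y)$ needed. To repair your write-up, replace the Cox-ring identification by this fine-complexity estimate (or else supply a genuine proof that the involution acts trivially on $\Cl(Y)_\qq$ in this setting, which is a substantially harder claim and is not what the paper does); the rest of your argument, including the construction of the index-one cover and the crepant pullback of the boundary, is sound.
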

	In light of Theorem ~\ref{introthm:abs-compl-cA}, Theorem ~\ref{introthm:compl-one-quot-of-cluster-type} can be viewed as a strengthening of ~\cite[Theorem 1.8]{BMSZ18}.
	
	A \emph{$T$-variety} is a normal projective variety $X$ with an effective action of an algebraic torus $T\cong \mathbb{G}_m^k$. 
	These varieties were introduced by Altmann and Hausen in \cite{AH06}, where they described $T$-varieties  via ``polyhedral divisor'', generalizing the theory of toric varieties and varieties with a $\mathbb{G}_m$-action.
	Several results of toric varieties have been generalized to the setting of $T$-varieties. For example, results about the fundamental group \cite{LLM19}, singularities \cite{LS13}, and Cox rings \cite{HS10}.
	The \emph{$T$-complexity} of $X$ is the difference $\dim X - k$, which is the smallest codimension in $X$ of an orbit of the $T$-action. The $T$-varieties of $T$-complexity zero are precisely the toric varieties. We prove that a Fano type variety $X$ of dimension $n$ and $T$-complexity one admits a finite morphism $Y \to X$, with $(Y,B_Y)$ a log Calabi--Yau sub-pair and $Y\setminus \Supp B_Y$ contains an open algebraic torus of dimension $n$.
	
	\begin{introthm}\label{introthm:t-compl-one}
		Let $X$ be an $n$-dimensional Fano type variety of $\mathbb{T}$-complexity one. Then there exists a surjective finite morphism $Y\rightarrow X$ of degree at most $60$ from a normal projective variety $Y$ that admits a log Calabi--Yau sub-pair $(Y,B_Y)$ and an open embedding $\mathbb{G}_m^n\hookrightarrow Y\setminus \Supp B_Y.$
	\end{introthm}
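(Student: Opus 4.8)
The plan is to reduce the statement about $\TT$-complexity one to the statement about fine complexity one (Theorem \ref{introthm:fine-compl-one-cluster-type}) and the absolute complexity one cover (Theorem \ref{introthm:compl-one-quot-of-cluster-type}), after first passing through the Altmann--Hausen description. Concretely, since $X$ has $\TT$-complexity one, its $\TT$-action comes from a polyhedral divisor on a \emph{curve} $C$; because $X$ is of Fano type (hence rationally connected), the Chow quotient $C$ is $\pp^1$. Thus $X$ is a $\TT$-variety of complexity one over $\pp^1$. The first step is to produce a $\TT$-invariant log Calabi--Yau sub-pair structure: since $X$ is of Fano type, it has a $\TT$-invariant anti-canonical $\qq$-divisor, and after running a $\TT$-equivariant MMP (or directly from the combinatorial data) we may choose $B_X$ with $K_X+B_X\sim_\qq 0$ and $(X,B_X)$ log canonical, where $B_X$ is supported on $\TT$-invariant divisors together with fibers over $\pp^1$. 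The $\TT$-complexity one hypothesis means that the generic $\TT$-orbit has codimension one, so $X\setminus \Supp B_X$ is fibered over an open subset of $\pp^1$ with general fiber $\G_m^n$ — we just need to arrange $B_X$ to contain the ``bad'' fibers.

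The second step is to bound the number of special fibers. This is where the degree bound $60$ must come from: over $\pp^1$, a log Calabi--Yau structure forces the boundary to have degree two (Euler characteristic reasons, i.e. $\deg K_{\pp^1} = -2$), so after accounting for the invariant divisors there are at most finitely many fibral components of $B_X$, and controlling their multiplicities is an orbifold-curve computation. The number $60$ is exactly the order of the largest finite subgroup of $\mathrm{PGL}_2(\kk)$ acting on $\pp^1$ with a Calabi--Yau orbifold quotient — i.e. $A_5$, the icosahedral group — so the finite cover $Y\to X$ will be built by base-changing along the corresponding cover $\pp^1\to\pp^1$ (or a sub-cover) that trivializes the orbifold structure of the base, combined with the degree-$\le 2$ cover of Theorem \ref{introthm:compl-one-quot-of-cluster-type} to kill the index. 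One then checks that on $Y$ the induced pair $(Y,B_Y)$ has a torus of the full dimension $n$ in its complement: after the base change the fibration over $\pp^1$ acquires a section-like structure (or becomes a product over the generic point in a way compatible with the toric fibers), so $Y\setminus\Supp B_Y$ contains $\G_m \times \G_m^{n-1} = \G_m^n$ as an open set.

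The third step is the verification that $(Y,B_Y)$ is a genuine log Calabi--Yau sub-pair, i.e. $K_Y+B_Y\sim_\qq 0$ — this follows from Riemann--Hurwitz applied to $Y\to X$ together with the ramification being exactly along the divisors we used to define the cover — and that $\overline{c}(Y,B_Y)\le 1$, so that (if one wants the stronger cluster-type conclusion) Theorem \ref{introthm:fine-compl-one-cluster-type} applies. Actually for the statement as written we need less: we only need the open torus embedding, which is immediate once the base orbifold has been trivialized and the $\TT$-fibers are genuine tori over the generic point.

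The main obstacle I expect is \emph{controlling the degree}: showing that trivializing the orbifold structure of the base $\pp^1$ — together with whatever gerbe/index issue appears in the $\TT$-quotient presentation — can always be done by a cover of degree at most $60$. This requires the classification of log Calabi--Yau orbifold structures on $\pp^1$ (the spherical triples $(2,2,n)$, $(2,3,3)$, $(2,3,4)$, $(2,3,5)$ and the degenerate ones), checking that the worst case is $(2,3,5)$ with its $|A_5| = 60$ cover, and — more delicately — checking that the \emph{stacky} structure coming from the Altmann--Hausen presentation of $X$ (not just a naive quotient singularity structure on a coarse $\pp^1$) is still resolved by such a cover, and that this cover can be chosen compatibly with the degree-$\le 2$ index cover so that the two do not multiply beyond $60$. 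A secondary subtlety is ensuring $Y$ is normal and projective and the morphism finite surjective; this is standard for normalized base change but should be stated carefully.
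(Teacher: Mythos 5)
Your outline follows the same route as the paper (Altmann--Hausen presentation over a curve, $\TT$-invariant complement, canonical bundle formula to an orbifold base $\pp^1$, covering the worst spherical triple $(2,3,5)$ by its $A_5$-cover of order $60$, then base change), but it has a genuine gap at the decisive step. You take for granted that the base orbifold $(C,\widetilde{B}_C)$ is one of the triples $(2,2,n)$, $(2,3,3)$, $(2,3,4)$, $(2,3,5)$ --- which, incidentally, are the log \emph{Fano} orbifold structures on $\pp^1$, not the log Calabi--Yau ones --- and you never exclude the genuinely log Calabi--Yau cases $(2,3,6)$, $(2,4,4)$, $(3,3,3)$, $(2,2,2,2)$ (or a base boundary with coefficient-one points). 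These also have standard coefficients and nonpositive degree, so nothing in your Euler-characteristic remark rules them out; yet they have infinite orbifold fundamental group (index one cover an elliptic curve), so no finite cover of bounded degree trivializes the orbifold structure, the bound $60$ fails, and $Y\setminus\Supp B_Y$ could not even contain an open $\G_m^n$ since it would dominate a genus one curve. This exclusion is exactly where the Fano type hypothesis does its hardest work in the paper: one proves $K_C+\widetilde{B}_C$ is not numerically trivial by passing to the index one cover $E\to C$, base changing the equivariant model with irreducible fibers, noting the induced finite morphism is quasi-\'etale (ramification indices of $E \to C$ match the fiber multiplicities), hence the cover is again of Fano type, and then contradicting \cite[Theorem 5.2]{FG12} because a genus one curve is not of Fano type. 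Your proposal has no substitute for this argument.

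Two secondary points. First, the extra degree-$\leq 2$ cover from Theorem~\ref{introthm:compl-one-quot-of-cluster-type} that you propose to ``kill the index'' is not used in the paper and is a liability: combined with the $A_5$-cover it could push the degree to $120$, and you flag but do not resolve this compatibility. It is unnecessary because, after base changing to $C'$ and running a $\TT$-equivariant MMP over $C'$ so that all fibers become irreducible with the expected multiplicities, one exhibits a \emph{reduced} boundary $\widehat{B}_{Y''}$ with $|\widehat{B}_{Y''}|=\dim Y''+\rho(Y'')$, so $(Y'',\widehat{B}_{Y''})$ is toric by Theorem~\ref{introthm:BMSZ18} directly. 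Second, your claim that the torus embedding is immediate once the base orbifold is trivialized glosses over real work: the generic trivialization $\pi^{-1}(U)\cong U\times F$ only yields a copy of $\G_m^n$ if the vertical boundary can be arranged over at most two points of the base, and one must then transport the open torus orbit of $Y''$ back through the birational contraction and the Stein factorization into $Y\setminus \Supp B_Y$; the paper does this by noting that $\TT$-orbits in the open orbit are divisors, so the relevant maps restrict to open immersions there. These steps need to be carried out, not asserted.
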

	
	The number $60$ in the statement of Theorem ~\ref{introthm:t-compl-one} shows up as the order of the icosohedral group in its guise as the orbifold fundamental group of the smooth orbifold $(\pp^1, \frac{1}{2}p+\frac{2}{3}q+\frac{4}{5}r)$ (see ~\cite[Definition 1.2]{Clau08}). In the event that the sub-boundary $B_Y$ is effective, it will follow from the proof that $(Y,B_Y)$ is a cluster type pair. In general, one might refer to $(Y,B_Y)$ as a \textit{cluster type sub-pair}.
	
	\subsection*{Acknowledgements}
	The authors would like to thank J\'{a}nos Koll\'{a}r, Joaquín Moraga and Burt Totaro for valuable comments and discussions. The first author is partially supported by NSF grant DMS-2136090. Part of this project was done during the participation of the first and second authors in the \href{https://aimath.org/pastworkshops/higherdimlogcy.html}{``Higher-dimensional log Calabi-Yau pairs''} workshop at the American Institute of Mathematics.
	
	\section{Preliminaries}
	We work over an algebraically closed field $\kk$ of characteristic zero. In this section we will recall definitions and results related to the Minimal Model Program and its singularities, the complexity, cluster type pairs, conic fibrations, and Cox rings and Mori Dream Spaces.
	
	\subsection{Log pairs} For definitions of the Minimal Model Program and its singularities, we refer to \cite{KM98} and \cite{Kol13}. We recall some of the definitions for the reader's convenience.
	
	\begin{definition}{\em
			A \textit{log sub-pair} $(X,B)$ consists of a normal quasi-projective variety $X$ and a $\rr$-divisor $B$ with the property that $K_X+B$ is $\rr$-Cartier. We say that a log sub-pair $(X,B)$ is a \textit{log pair} if $B$ is effective.}
	\end{definition}
	
	\begin{definition}{\em
			Let $(X,B)$ be a log sub-pair, and let $f\colon Y\rightarrow X$ be a projective birational morphism from a normal variety $Y$. We will refer to the unique log sub-pair $(Y,B_Y)$ satisfying
			\begin{itemize}
				\item $K_Y+B_Y\sim_\rr f^*(K_X+B),$ and
				\item $f_*B_Y=B$
			\end{itemize}
			as the \textit{log pullback of $(X,B)$ via $f.$}}
	\end{definition}
	
	\begin{definition}{\em
			Let $(X,B)$ be a log sub-pair and let $f\colon Y \rightarrow X$ be a projective birational morphism from a normal variety $Y.$ Given a prime divisor $E\subset Y,$ its \textit{log discrepancy} with respect to $(X,B)$ is the quantity
			$$a_E(X,B)=1-{\rm coeff}_E(B_Y),$$
			where $(Y,B_Y)$ is the log pullback of $(X,B)$ via $f.$\par We say that a log pair $(X,B)$ is \textit{log canonical} (respectively \textit{Kawamata log terminal}) if $a_E(X,B)\geq 0$ (respectively $a_E(X,B)> 0$) for all prime divisors $E$ over $X.$}
	\end{definition}
	
	\begin{definition}{\em
			Let $(X,B)$ be a log canonical pair. A \textit{log canonical place} of $(X,B)$ is a divisor $E$ over $X$ for which $a_E(X,B)=0$. A \textit{log canonical center} of $(X,B)$ is a subvariety $Z\subset X$ which is the image on $X$ of a log canonical place of $(X,B)$.}
	\end{definition}
	
	\begin{definition}\label{def:dlt}
		{\em
			Let $(X,B)$ be a log pair. We say that $(X,B)$ is \textit{divisorially log terminal} if it is log canonical and there exists an open subset $U \subset X$, such that:
			\begin{enumerate}
				\item the pair $(U,B_U)$ is log smooth, and
				\item every log canonical center of $(X,B)$ intersects $U.$
			\end{enumerate}
		}
	\end{definition}
	
	\begin{notation}{\em
			We will often abbreviate log canonical, Kawamata log terminal and divisorially log terminal as lc, klt and dlt respectively.}
	\end{notation}
	
	\begin{definition}\label{def:dlt-mod}
		{\em Let $(X,B)$ be a log canonical pair.  A \textit{dlt modification} of $(X,B)$ is a projective birational morphism $f \colon Y \rightarrow X$, satisfying the following conditions:
			
			\begin{enumerate}
				\item $Y$ is $\qq$-factorial,
				\item every $f$-exceptional divisor is a log canonical place of $(X,B)$, and
				\item the log pullback $(Y,B_Y)$ of $(X,B)$ via $f$ is dlt.
			\end{enumerate}
		}
	\end{definition}
	
	We recall the existence of dlt modifications.
	
	\begin{lemma}\label{lem:exist-dlt-mod}
		Let $(X,B)$ be a log canonical pair. Then there exists a dlt modification $f\colon Y\rightarrow X$ of $(X,B)$.
	\end{lemma}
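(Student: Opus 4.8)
The plan is to realize the dlt modification as the output of a relative minimal model program over $X$, following the standard construction of a ``dlt blow-up'' (see, e.g., \cite{Kol13}). First I would choose a log resolution $g\colon W\to X$ of $(X,B)$, so that $W$ is smooth and $\Supp(g_*^{-1}B)\cup\mathrm{Exc}(g)$ is a simple normal crossing divisor. Let $E_1,\dots,E_m$ be the $g$-exceptional prime divisors and put
\[
\Gamma \;=\; g_*^{-1}B \;+\; \sum_{i=1}^{m} E_i,
\]
so that $(W,\Gamma)$ is log smooth, hence $\qq$-factorial and dlt. Writing the crepant decomposition $K_W+\Gamma=g^*(K_X+B)+F$, log canonicity of $(X,B)$ forces $F=\sum_i a_{E_i}(X,B)\,E_i\ge 0$; moreover $F$ is $g$-exceptional and $\Supp F$ is exactly the union of those $E_i$ which are \emph{not} log canonical places of $(X,B)$.

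Next I would run a $(K_W+\Gamma)$-MMP over $X$ with scaling of a $g$-ample divisor. Since $K_W+\Gamma\sim_{\qq,X}F\ge 0$ with $F$ effective and $g$-exceptional, every step takes place over the image of $\mathrm{Exc}(g)$, and the program terminates with a model $f\colon Y\to X$ on which $K_Y+\Gamma_Y$ is $f$-nef; here $\Gamma_Y$ and $F_Y$ denote the strict transforms on $Y$ of $\Gamma$ and $F$. Then $K_Y+\Gamma_Y\sim_{\qq,X}F_Y$ with $F_Y$ effective, $f$-nef and $f$-exceptional, so the negativity lemma gives $F_Y=0$; that is, all of $\Supp F$ has been contracted. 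From this I would read off the three defining properties: $Y$ is $\qq$-factorial because the MMP preserves $\qq$-factoriality; the $f$-exceptional divisors are precisely the strict transforms of the surviving $E_i$, and these are log canonical places of $(X,B)$ because the exceptional divisors that are not log canonical places lay in $\Supp F$ and were contracted; and $K_Y+\Gamma_Y=f^*(K_X+B)$ with $f_*\Gamma_Y=B$, so $\Gamma_Y$ is the log pullback $B_Y$, while $(Y,\Gamma_Y)$ is dlt since dlt-ness propagates through the steps of an MMP started from the log smooth pair $(W,\Gamma)$. Hence $f\colon Y\to X$ is a dlt modification.

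The main obstacle is the existence and termination of the relative MMP used above, since termination of flips for dlt pairs is not known in general. I would handle this by the usual route: run the MMP with scaling and either reduce to the minimal model program for klt pairs via a small perturbation of $\Gamma$ supported on $\Supp F$, or simply quote the existence of dlt modifications directly from \cite{Kol13} and the references therein. Everything else in the argument is formal and follows from the negativity lemma together with the fact that $\qq$-factoriality and dlt-ness propagate through the MMP.
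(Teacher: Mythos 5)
Your argument is correct, but it takes a different (more self-contained) route than the paper: the paper proves this lemma by simply citing \cite[Corollary 1.36]{Kol13}, whereas you reconstruct the standard ``dlt blow-up'' argument that underlies that citation. Your steps are the right ones: with $\Gamma=g_*^{-1}B+\sum_i E_i$ on a log resolution one has $K_W+\Gamma=g^*(K_X+B)+F$ with $F\geq 0$ exceptional and $\Supp F$ exactly the non-lc-places, the relative $(K_W+\Gamma)$-MMP over $X$ ends with $F_Y=0$ by the negativity lemma, and $\qq$-factoriality and dlt-ness persist through the MMP, so the surviving exceptional divisors are lc places and $\Gamma_Y$ is the log pullback. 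You also correctly isolate the only genuinely nontrivial input, namely that this relative MMP with scaling can be run and terminates for the dlt pair $(W,\Gamma)$ (note that over the birational base every divisor is relatively big, so this is within reach of the known MMP technology, but it still requires care), and your fallback for that point is precisely the reference the paper uses. In short: the paper buys brevity by quoting the literature; your version buys transparency, at the cost of having to defer the termination statement to the same source or to a perturbation argument that would need to be written out carefully.
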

	
	\begin{proof}
		This is ~\cite[Corollary 3.6]{Bir12}.
	\end{proof}
	
	\begin{definition}\label{def:logCY}
		{\em Let $(X,B)$ be a log pair. We say that $(X,B)$ is a \textit{log Calabi--Yau pair} if $X$ is projective, $(X,B)$ is log canonical and $K_X+B\sim_{\rr} 0$.
		}
	\end{definition}
	
	\begin{definition}
		{\em
			Let $(X,B)$ be a log Calabi--Yau pair with $B$ a $\qq$-divisor. We define the \emph{index of (X,B)} as the smallest positive integer $m$ for which $m(K_X + B) \sim 0.$
		}
	\end{definition}
	
	\begin{definition}\label{def:crepant}
		{\em
			Let $(X,B)$ and $(Y,B_Y)$ be two log sub-pairs. We will say that a birational map $f\colon X \dashrightarrow Y$ is \textit{crepant} with respect to these sub-pairs if it admits a resolution 
			\[
			\xymatrix{ 
				& Z \ar[ld]_-{p} \ar[rd]^-{q} & \\ 
				X \ar@{-->}[rr]^-{f}& & Y
			}
			\]
			with proper birational morphisms $p$ and $q$ such that the log pullback of $(X,B)$ via $p$ is equal to the log pullback of $(Y,B_Y)$ via $q.$
		}
	\end{definition}
	
	\begin{remark}\label{rmk:crepant-discrepancy}
		{\em
			Let $f\colon (X,B)\dashrightarrow (Y,B_Y)$ be a crepant birational map between two log pairs. It follows from ~\cite[Lemma 2.30]{KM98} that $a_E(X,B)=a_E(Y,B_Y)$ for every divisor $E$ over $X$ and $Y.$
		}
	\end{remark}
	
	\begin{definition}\label{def:crep-fin}
		{\em
			Let $f\colon X\rightarrow Y$ be a surjective finite morphism between normal quasi-projective varieties, and let $(X,B)$ and $(Y,B_Y)$ be log sub-pairs. We will say that $f$ is \textit{crepant} with respect to $(X,B)$ and $(Y,B_Y)$ if for all prime divisors $D\subset X$ we have $${\rm coeff}_D(B)=1-r_D\left(1-{\rm coeff}_{D_Y}(B_Y)\right),$$
			where $D_Y=f(D)$ and $r_D$ is the ramification index of $f$ at the generic point of $D.$
		}
	\end{definition}
	\begin{remark}\label{rem:crep-pullback-fin}
		{\em
			If $f\colon X \rightarrow Y$ is a surjective finite morphism between normal quasi-projective varieties that is crepant with respect to log sub-pairs $(X,B)$ and $(Y,B_Y),$ then it follows that $$f^*(K_Y+B_Y)\sim_{\rr}K_X+B.$$
		}
	\end{remark}
	\begin{definition}\label{def:crep-gen-fin}
		{\em
			Let $f\colon X\rightarrow Y$ be a projective, surjective and generically finite morphism between normal quasi-projective varieties, and let $(X,B)$ and $(Y,B_Y)$ be log sub-pairs. Let $$X\xrightarrow{g}W\xrightarrow{h}Y$$
			be the Stein factorization of $f,$ and write $B_W=f_*B.$ We will say that $f$ is \textit{crepant} with respect to the pairs $(X,B)$ and $(Y,B_Y)$ if the birational morphism $g$ is crepant with respect to $(X,B)$ and $(W,B_W)$ and if the finite morphism $h$ is crepant with respect to $(W,B_W)$ and $(Y,B_Y).$
		}
	\end{definition}

	\subsection{Complexity}
	
	In this subsection we recall the notion of complexity and some basic facts about its behavior under contracting birational maps.
	
	\begin{definition}\label{defn:complexity}{\em
			Let $X$ be a normal projective variety and let $(X,B)$ be a log sub-pair. The \textit{complexity} of $(X,B)$ is $$c(X,B)=\dim X + \rank {\rm WDiv}_{\rm alg}(X)-|B|,$$
			where ${\rm WDiv}_{\rm alg}(X)$ is the group of Weil divisors on $X$ modulo algebraic equivalence and $|B|$ is the sum of the coefficients of $B$. 
		}
	\end{definition}
	
	The following Lemma allows us to compare the behavior of the complexity under a birational contraction (see Definition \ref{def:bir-contraction-map}).
	
	\begin{lemma}[{\cite[Lemma 2.41]{EF24}}]\label{lem:complexity-and-exceptional-divisors}
		Let $f\colon X \dashrightarrow Y$ be a contracting birational map between normal projective varieties. Let $(X,B)$ be a log sub-pair. Denote by $E_1,\hdots, E_r$ the prime $f$-exceptional divisors and write $B_Y=f_*B.$ Then $$c(X,B)=c(Y,B_Y)+\sum_{i=1}^ra_{E_i}(X,B).$$ 
	\end{lemma}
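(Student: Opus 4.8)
The statement to prove is Lemma \ref{lem:complexity-and-exceptional-divisors}: for a birational contraction $f\colon X \dashrightarrow Y$ between normal projective varieties and a log sub-pair $(X,B)$, with prime $f$-exceptional divisors $E_1,\dots,E_r$ and $B_Y = f_*B$, one has
$$c(X,B) = c(Y,B_Y) + \sum_{i=1}^r a_{E_i}(X,B).$$

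\textbf{Proof proposal.} The plan is to compare the three ingredients of the complexity — dimension, rank of $\operatorname{WDiv}_{\mathrm{alg}}$, and the coefficient sum $|B|$ — across the map $f$, since the formula is just a bookkeeping identity once the behavior of each piece is understood. First, $\dim X = \dim Y$ because $f$ is birational, so the dimension terms contribute nothing to the difference. The key geometric input is that a birational contraction $f$ does not extract divisors: every prime divisor on $Y$ is the strict transform of a unique prime divisor on $X$, and the prime divisors on $X$ that are contracted by $f$ are exactly $E_1,\dots,E_r$. This gives a decomposition of the divisors on $X$ into those dominating a divisor on $Y$ and the $E_i$, and it is the mechanism by which the $r$ exceptional terms appear on the right-hand side.

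Next I would handle the rank term. Resolving $f$ by a common resolution $p\colon Z \to X$, $q\colon Z \to Y$ and using that $\operatorname{WDiv}_{\mathrm{alg}}$ (Weil divisors modulo algebraic equivalence) behaves additively with respect to adding or removing finitely many prime divisors — passing to the open locus where $f$ is an isomorphism and using the right-exact sequence relating $\operatorname{WDiv}_{\mathrm{alg}}$ of a variety and of an open subset — one gets
$$\operatorname{rank}\operatorname{WDiv}_{\mathrm{alg}}(X) = \operatorname{rank}\operatorname{WDiv}_{\mathrm{alg}}(Y) + r,$$
because $X$ and $Y$ share a common open subset whose complement in $X$ is $E_1 \cup \cdots \cup E_r \cup (\text{indeterminacy-related loci of codim} \geq 2)$ and whose complement in $Y$ has codimension $\geq 2$. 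Codimension-$\geq 2$ loci do not affect $\operatorname{WDiv}$, so only the $r$ divisorial contributions survive. This is the step I expect to require the most care: one must be precise that the indeterminacy locus of $f$ on $X$ has codimension $\geq 2$ (true since $f$ is a contraction, not merely birational) and that no divisor gets contracted to something of the wrong dimension in a way that double-counts. Citing \cite[Lemma 2.41]{EF24} or the analogous discussion in \cite{BMSZ18} for this rank computation is the cleanest route, but it is worth spelling out.

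Finally, for the coefficient sum: by definition of the log pullback and of log discrepancy, writing $K_Y + B_Y' $ for the push-forward data, one has $\operatorname{coeff}_{E_i}(B_Z) = 1 - a_{E_i}(X,B)$ on a common resolution, while all other coefficients match those of $B$ and of $B_Y$ respectively since $f$ is crepant-compatible with the definition $B_Y = f_*B$ only up to the discrepancies along the $E_i$. Concretely, $|B| = |B_Y| + \sum_{i=1}^r \operatorname{coeff}_{E_i}(B) = |B_Y| + \sum_{i=1}^r(1 - a_{E_i}(X,B))$ — wait, more carefully: the discrepancy $a_{E_i}(X,B)$ is computed as $1 - \operatorname{coeff}_{E_i}(B_{Y'})$ where $(Y', B_{Y'})$ is a model extracting $E_i$; since here $E_i$ is already a divisor on $X$, $\operatorname{coeff}_{E_i}(B_Y\text{-type pullback on }X) = \operatorname{coeff}_{E_i}(B)$ so $a_{E_i}(X,B) = 1 - \operatorname{coeff}_{E_i}(B)$ only if $Y$ were the relevant model; in general one compares via $Z$. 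Assembling: $|B| - |B_Y| = \sum_i \operatorname{coeff}_{E_i}(B)$, and one checks via the log pullback to $Z$ that $\operatorname{coeff}_{E_i}(B) = 1 - a_{E_i}(X,B)$ is \emph{not} quite right — rather the combination $\dim X + \operatorname{rank}\operatorname{WDiv}_{\mathrm{alg}}(X) - |B|$ versus the $Y$-version differs by $\sum_i\big(1 - \operatorname{coeff}_{E_i}(B)\big) = \sum_i a_{E_i}(X,B)$, using that the rank drops by $r$ (contributing $-r$ going from $X$ to $Y$, i.e.\ $+r$ on the right) and $|B|$ drops by $\sum_i \operatorname{coeff}_{E_i}(B)$. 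Collecting the terms $+r - \sum_i\operatorname{coeff}_{E_i}(B) = \sum_i(1-\operatorname{coeff}_{E_i}(B)) = \sum_i a_{E_i}(X,B)$ gives the formula. The main obstacle is therefore purely the rank computation for $\operatorname{WDiv}_{\mathrm{alg}}$ under removing the exceptional divisors; everything else is a direct unwinding of definitions, and I would lean on \cite[Lemma 2.41]{EF24} for the delicate point while presenting the coefficient and dimension bookkeeping explicitly.
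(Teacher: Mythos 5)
First, note that the paper does not actually prove this lemma: it is quoted verbatim with a citation to \cite[Lemma 2.41]{EF24}, so there is no in-paper argument to compare against; your proposal is an attempt to reprove the cited result, and its overall bookkeeping skeleton is the right one. One remark to simplify your exposition: the passage where you worry that $a_{E_i}(X,B)=1-\operatorname{coeff}_{E_i}(B)$ ``is not quite right'' and must be mediated through a common resolution $Z$ is self-contradictory, because your final assembly uses exactly that identity --- and in fact the identity is correct on the nose, with no resolution needed: each $E_i$ is a prime divisor on $X$ itself, so taking the identity morphism as the model in the definition of log discrepancy gives $a_{E_i}(X,B)=1-\operatorname{coeff}_{E_i}(B)$ directly. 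With that, $\dim X=\dim Y$, $|B|-|B_Y|=\sum_i\operatorname{coeff}_{E_i}(B)$ (strict transforms of non-exceptional components are distinct prime divisors with the same coefficients, and every prime divisor of $Y$ arises this way since $f$ is surjective in codimension one), and the formula reduces entirely to the rank statement $\operatorname{rank}\operatorname{WDiv}_{\mathrm{alg}}(X)=\operatorname{rank}\operatorname{WDiv}_{\mathrm{alg}}(Y)+r$.

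That rank statement is where your argument has a genuine gap. The excision-type sequence you invoke (restriction to a common open subset with small complement on the $Y$-side) shows at best that the kernel of $f_*\colon \operatorname{WDiv}_{\mathrm{alg}}(X)_\qq\to\operatorname{WDiv}_{\mathrm{alg}}(Y)_\qq$ is \emph{spanned} by the classes $[E_1],\hdots,[E_r]$, which only yields the inequality $\operatorname{rank}\operatorname{WDiv}_{\mathrm{alg}}(X)\leq \operatorname{rank}\operatorname{WDiv}_{\mathrm{alg}}(Y)+r$; the lemma asserts an equality, so you must also prove that no nontrivial rational combination $\sum_i a_iE_i$ is algebraically equivalent to zero on $X$, i.e.\ that the exceptional classes are linearly independent modulo algebraic (equivalently, numerical) equivalence. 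This is the one non-formal input (standard via the negativity lemma on a common resolution, or by producing curves detecting each $E_i$), and it is precisely the point your proposal leaves to a citation of \cite[Lemma 2.41]{EF24} --- which is circular, since that is the statement being proven. You should also make explicit the small-complement argument that algebraic equivalence is insensitive to removing closed subsets of codimension at least two (closing up the witnessing family and noting that no fiber component can lie in the small locus), which you gesture at but do not carry out.
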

	
	As a corollary, we see that the complexity of a log canonical pair is unaffected by extracting log canonical places.
	
	\begin{corollary}\label{cor:compl-dlt-mod}
		Let $(X,B)$ be a log canonical pair and let $f\colon (Y,B_Y)\rightarrow (X,B)$ be a birational morphism extracting only log canonical places of $(X,B)$. Then $$c(X,B)=c(Y,B_Y).$$
	\end{corollary}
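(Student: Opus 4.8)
The plan is to obtain this as an immediate consequence of Lemma~\ref{lem:complexity-and-exceptional-divisors}, once we account for the direction of the map. Although $f$ is written as a morphism $Y \to X$, when regarded as a birational map $f \colon Y \dashrightarrow X$ it is a birational contraction: since $f$ is a morphism, its inverse $X \dashrightarrow Y$ contracts no divisors. Thus Lemma~\ref{lem:complexity-and-exceptional-divisors} applies with $Y$ in the role of the source and $X$ in the role of the target.

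Here I read $(Y,B_Y)$ as the log pullback of $(X,B)$ via $f$, so that $K_Y + B_Y \sim_\qq f^*(K_X+B)$ and $f_* B_Y = B$; the assumption that $f$ extracts only log canonical places of $(X,B)$ is exactly the statement that every $f$-exceptional prime divisor $E_i$ satisfies $a_{E_i}(X,B) = 0$. The pair $(Y,B_Y)$ is a log sub-pair because $K_Y+B_Y$ is the pullback of the $\qq$-Cartier divisor $K_X+B$, and one checks that $B_Y$ is effective (the exceptional components have coefficient $1$, and the remaining components have the coefficients of $B$), so that $c(Y,B_Y)$ is defined.

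Applying Lemma~\ref{lem:complexity-and-exceptional-divisors} to the birational contraction $f \colon Y \dashrightarrow X$ and the sub-pair $(Y,B_Y)$ — whose prime $f$-exceptional divisors are $E_1,\dots,E_r$ and with $f_* B_Y = B$ — yields
\[
c(Y,B_Y) = c(X,B) + \sum_{i=1}^r a_{E_i}(Y,B_Y).
\]
Now each $E_i$ is a prime divisor on $Y$ itself, so $a_{E_i}(Y,B_Y) = 1 - \coeff_{E_i}(B_Y)$; but by definition of the log pullback one also has $a_{E_i}(X,B) = 1 - \coeff_{E_i}(B_Y)$, and this is $0$ by hypothesis. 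Hence every term in the sum vanishes and $c(X,B) = c(Y,B_Y)$.

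The argument is essentially bookkeeping, so there is no serious obstacle. The one point to handle with a little care is the reversal of direction — recognizing that a birational morphism, read as a birational map the other way, is a birational contraction, so that Lemma~\ref{lem:complexity-and-exceptional-divisors} is applicable — together with the identification of $a_{E_i}(Y,B_Y)$, which appears in that formula, with $a_{E_i}(X,B)$, which is $0$ precisely because $f$ extracts only log canonical places.
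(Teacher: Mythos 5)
Your proof is correct and is essentially the paper's own argument: apply Lemma~\ref{lem:complexity-and-exceptional-divisors} with $Y$ as the source of the birational contraction and observe that each exceptional divisor $E_i$, being a log canonical place of $(X,B)$ with $(Y,B_Y)$ the crepant pullback, satisfies $a_{E_i}(Y,B_Y)=a_{E_i}(X,B)=0$, so the correction terms vanish. The paper states this more tersely, but the content and route are the same.
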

	\begin{proof}
		Denote by $E_1,\hdots, E_r$ the prime $f$-exceptional divisors. By assumption, $a_{E_i}(Y,B_Y)=0$ for all $1\leq i \leq r.$ The desired result follows from Lemma ~\ref{lem:complexity-and-exceptional-divisors}.
	\end{proof}
	
	The following variant of the complexity, which we will refer to as the \textit{fine complexity}, is what was referred to as the complexity in ~\cite[Definition 1.1]{BMSZ18}.
	
	\begin{definition}\label{def:fine-comp}
		{\em
			Let $X$ be a normal projective variety, and let $(X,B)$ be a log pair. A \textit{decomposition} $\Sigma$ of $B$ is an expression of the form $\sum_{i=1}^ka_iB_i\leq B,$ where $B_i$ is an integral Weil divisor and $a_i\geq 0$ for each $1\leq i \leq k.$ Given a decomposition $\Sigma$ of $B,$ its \textit{norm} is $|\Sigma|=\sum_{i=1}^ka_i$, its \textit{rank} $\rho(\Sigma)$ is the rank of the subgroup spanned by $B_1,\hdots, B_k$ in ${\rm WDiv}_{\rm alg}(X),$ and its \textit{complexity} is the quantity $c(\Sigma)=\dim X + \rho(\Sigma)-|\Sigma|.$ The \textit{fine complexity} of the log pair $(X,B)$ is the quantity $$\overline{c}(X,B)=\inf\{c(\Sigma)\mid \Sigma\text{ is a decomposition of }B\}.$$
		}
	\end{definition}
	
	We recall the following definition from the introduction.
	\begin{definition}\label{def:abs-comp}
		{\em
			Let $X$ be a normal projective variety $X,$ its \textit{absolute complexity} is the quantity
			$$\widehat{c}(X)=\inf\{c(X,B)\mid (X,B)\text{ is log Calabi--Yau}\}.$$
		}
	\end{definition}
	
	As a consequence of Lemma ~\ref{lem:complexity-and-exceptional-divisors}, we obtain the following basic fact regarding the behavior of absolute complexity under birational contractions.
	\begin{lemma}\label{lem:abs-comp-bir-contr}
		Let $f\colon X\dashrightarrow Y$ be a contracting birational map between normal projective varieties. Then $\widehat{c}(Y)\leq \widehat{c}(X).$
	\end{lemma}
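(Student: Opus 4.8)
The plan is to take an arbitrary log Calabi--Yau pair $(X,B)$, transport it along $f$ to a log Calabi--Yau pair $(Y,B_Y)$ of no larger complexity, and then pass to the infimum. If $X$ admits no log Calabi--Yau structure the statement is vacuous (the infimum defining $\widehat c(X)$ is $+\infty$), so assume $(X,B)$ is log Calabi--Yau and set $B_Y=f_*B$, which is effective since $B$ is. The two things to establish are that $(Y,B_Y)$ is again log Calabi--Yau and that $c(Y,B_Y)\leq c(X,B)$; combining them with $\widehat c(Y)\leq c(Y,B_Y)$ and taking the infimum over all such $(X,B)$ gives the claim.

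To see that $(Y,B_Y)$ is log Calabi--Yau, I would pick a common resolution $p\colon W\to X$, $q\colon W\to Y$ of $f$ and let $(W,B_W)$ be the log pullback of $(X,B)$ via $p$, so that $K_W+B_W\sim_\qq p^*(K_X+B)\sim_\qq 0$ by Remark~\ref{rmk:alt-logCY-def}. Pushing forward by $q$ gives $K_Y+B_Y=q_*(K_W+B_W)\sim_\qq 0$; in particular $K_Y+B_Y$ is $\qq$-Cartier, so $(Y,B_Y)$ is a genuine log pair. Here one uses that $f$ is a \emph{birational contraction}: the strict transform on $W$ of a prime divisor on $Y$ is not $p$-exceptional, which identifies $q_*B_W$ with $f_*B=B_Y$. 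Next, the divisor $D:=(K_W+B_W)-q^*(K_Y+B_Y)$ satisfies $D\sim_\qq 0$ and $q_*D=0$, hence $D=0$: if $mD=\operatorname{div}_W(g)$, then $0=q_*(mD)=\operatorname{div}_Y(g)$, so $g$ is constant and $mD=0$. Thus $(W,B_W)$ is simultaneously the log pullback of $(X,B)$ via $p$ and of $(Y,B_Y)$ via $q$, i.e. $f$ is crepant in the sense of Definition~\ref{def:crepant}. By Remark~\ref{rmk:crepant-discrepancy} we get $a_E(X,B)=a_E(Y,B_Y)$ for every divisor $E$, so log canonicity of $(X,B)$ passes to $(Y,B_Y)$, which is therefore log Calabi--Yau.

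With $(Y,B_Y)$ in hand, I would apply Lemma~\ref{lem:complexity-and-exceptional-divisors} to $f$: writing $E_1,\dots,E_r$ for the prime $f$-exceptional divisors, $c(X,B)=c(Y,B_Y)+\sum_{i=1}^r a_{E_i}(X,B)$. Since $(X,B)$ is log canonical, each $a_{E_i}(X,B)\geq 0$, so $\widehat c(Y)\leq c(Y,B_Y)\leq c(X,B)$. Taking the infimum over all log Calabi--Yau pairs $(X,B)$ yields $\widehat c(Y)\leq \widehat c(X)$.

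The one point requiring care — and the step I expect to be the main obstacle — is verifying that $(Y,B_Y)$ is honestly a log pair and is log canonical, i.e. that no discrepancy is lost in passing through $f$. This is precisely where the hypothesis that $f$ is a birational contraction (rather than an arbitrary birational map) enters, through the identification $q_*B_W=f_*B$ and the vanishing of $D$; without it, $f^{-1}$ could contract a divisor and $B_Y$ would fail to record the corresponding coefficient. Everything else is a direct application of Lemma~\ref{lem:complexity-and-exceptional-divisors}.
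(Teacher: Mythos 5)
Your proof is correct and follows essentially the same route as the paper: push $B$ forward to $B_Y=f_*B$, apply Lemma~\ref{lem:complexity-and-exceptional-divisors} together with nonnegativity of the log discrepancies to get $\widehat{c}(Y)\leq c(Y,f_*B)\leq c(X,B)$, and take the infimum. The paper's proof is exactly this one-liner, leaving implicit the verification (which you spell out via the common resolution and the vanishing of $D$) that $(Y,f_*B)$ is again log Calabi--Yau.
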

	\begin{proof}
		This is automatic if $\widehat{c}(X)=\infty,$ so it suffices to show the result when $\widehat{c}(X)<\infty.$ Given any log Calabi--Yau pair $(X,B)$ supported on $X,$ we obtain $$\widehat{c}(Y)\leq c(Y,f_*B)\leq c(X,B)$$
		by Lemma ~\ref{lem:complexity-and-exceptional-divisors}. The desired inequality follows.
	\end{proof}
	
	\subsection{Cluster type pairs}
	
	In this subsection we define cluster type pairs and cluster type varieties.
	
	\begin{definition}[c.f. {\cite[Definition 2.26]{EFM24}}]\label{def:cluster-type}
		{\em
			We say that a log Calabi--Yau pair $(X,B)$ is a \textit{cluster type pair} if there exists a toric log Calabi--Yau pair $(T,B_T)$ and a crepant birational map $\phi\colon (T,B_T)\dashrightarrow (X,B)$
			such that all $\phi$-exceptional divisors have coefficient one in $(T,B_T).$ We say that a variety $X$ is \textit{cluster type} if it admits a log Calabi--Yau pair $(X,B)$ of cluster type.
		}
	\end{definition}
	
	\begin{remark}
		\label{rem:Corti-clusterVariety}
		We may think of a cluster type variety $X$ as a projective variety $X$ such that $X \setminus B$ is a cluster variety in the sense of Corti (see \cite{Cor23}, Definition 2)
	\end{remark}
	The following observation follows immediately from the definition.
	\begin{lemma}\label{lem:check-cluster-type-on-bir-model}
		Let $(X',B')\dashrightarrow (X,B)$ be a crepant birational map between log Calabi--Yau pairs extracting only log canonical places of $(X,B).$ If $(X',B')$ is cluster type, then so is $(X,B).$ 
	\end{lemma}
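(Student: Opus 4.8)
The statement to prove is Lemma~\ref{lem:check-cluster-type-on-bir-model}: if $(X',B')\dashrightarrow(X,B)$ is a crepant birational map between log Calabi--Yau pairs extracting only log canonical places of $(X,B)$, and $(X',B')$ is cluster type, then $(X,B)$ is cluster type.

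The plan is to chain together the crepant birational map witnessing that $(X',B')$ is cluster type with the given map $(X',B')\dashrightarrow(X,B)$, and check that the composite still qualifies as a cluster type witness for $(X,B)$. Concretely, since $(X',B')$ is cluster type, by Definition~\ref{def:cluster-type} there is a toric log Calabi--Yau pair $(T,B_T)$ and a crepant birational map $g\colon(T,B_T)\dashrightarrow(X',B')$ extracting only log canonical places of $(X',B')$. Let $h\colon(X',B')\dashrightarrow(X,B)$ denote the given map. I would set $f=h\circ g\colon(T,B_T)\dashrightarrow(X,B)$ and argue that $f$ is crepant and extracts only log canonical places of $(X,B)$, which is exactly what is needed to conclude $(X,B)$ is cluster type.

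There are two things to verify. First, crepancy of the composite: this is essentially formal. Taking a common resolution of $g$ and then of $h$ (or a single normal variety $Z$ dominating $T$, $X'$, and $X$ via proper birational morphisms, which exists by resolving the graph), the log pullback of $(T,B_T)$ to $Z$ agrees with the log pullback of $(X',B')$ to $Z$ (because $g$ is crepant), and the latter agrees with the log pullback of $(X,B)$ to $Z$ (because $h$ is crepant); hence the log pullbacks of $(T,B_T)$ and $(X,B)$ to $Z$ coincide, so $f$ is crepant. Equivalently, one may invoke Remark~\ref{rmk:crepant-discrepancy}: all three pairs have the same log discrepancy $a_E$ at every divisor $E$ over them. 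Second, and this is the only point requiring a small argument, I must check that $f$ extracts only log canonical places of $(X,B)$: every $f$-exceptional prime divisor $E$ on $T$ must satisfy $a_E(X,B)=0$. Since $f=h\circ g$, an $f$-exceptional divisor $E\subset T$ is either $g$-exceptional or the strict transform of an $h^{-1}$-exceptional divisor (i.e. a divisor extracted by $h$ viewed from the $X'$ side is not relevant here since $g$ is dominant onto $X'$; what matters is divisors on $T$ that become non-divisorial on $X$). In the first case, $a_E(X',B')=0$ by hypothesis on $g$, and $a_E(X,B)=a_E(X',B')$ by crepancy of $h$ (Remark~\ref{rmk:crepant-discrepancy}), so $a_E(X,B)=0$. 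In the second case, $E$ corresponds to a prime divisor on $X'$ that $h$ contracts (a place extracted by $h^{-1}$, equivalently a log canonical place of $(X,B)$ extracted by $h$... wait, $h$ goes $X'\dashrightarrow X$, so a divisor contracted by $h$ is a divisor on $X'$ that is a log canonical place of $(X,B)$ since $h\colon(X',B')\dashrightarrow(X,B)$ extracts only log canonical places of $(X,B)$). Either way $a_E(X,B)=0$.

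The main (very mild) obstacle is bookkeeping about which divisors are exceptional for the composite versus the factors: one needs that the set of $f$-exceptional prime divisors on $T$ is contained in the union of the $g$-exceptional divisors and the strict transforms under $g^{-1}$ of the $h$-exceptional divisors on $X'$. This is a standard fact about composites of birational contractions (note $g$ and $h$, being extractions of divisors, have inverses that are birational contractions, so $h\circ g$ — read appropriately — is also a birational contraction, and there is no divisor on $T$ that is ``created'' out of nowhere). Once that inclusion is granted, the discrepancy computations above are immediate from crepancy, and one concludes by Definition~\ref{def:cluster-type} that $(X,B)$ is cluster type. In fact, the cleanest phrasing avoids even discussing $f$-exceptionality of individual divisors: the map $f\colon(T,B_T)\dashrightarrow(X,B)$ extracts only divisors $E$ with $a_E(X,B)=0$ because by crepancy $a_E(X,B)=a_E(X',B')=a_E(T,B_T)$ for every divisor over these pairs, and the divisors extracted by $f$ but not already divisorial on $X$ are precisely those extracted by $g$ over $X'$ (each a log canonical place of $(X',B')$, hence of $(X,B)$) together with those extracted by $h$ over $X$ (each a log canonical place of $(X,B)$ by hypothesis); all have log discrepancy zero.
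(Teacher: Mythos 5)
Your proof is correct and is exactly the argument the paper has in mind: the paper states this lemma with no proof, calling it immediate from Definition~\ref{def:cluster-type}, and your composition of the toric witness with the given map, together with the discrepancy bookkeeping via Remark~\ref{rmk:crepant-discrepancy}, is the intended justification. Nothing is missing; you have simply written out the ``immediate'' argument in full.
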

	
	\subsection{Contractions, Fibrations and Canonical Bundle Formula}\label{subsec:contractions} In this subsection we recall the definition of contractions and fibrations.
	
	\begin{definition}{\em
			Let $f\colon X \rightarrow Y$ be a proper morphism of varieties. We denote by $$N^1(X/Y)=\left({\rm Pic}(X)\otimes_\zz\rr\right)/\equiv_Y$$
			and $$N_1(X/Y)=\left(Z_1(X/Y)\otimes_\zz\rr\right)/\equiv_Y$$
			the real vector spaces of Cartier divisors and relative $1$-cycles modulo numerical equivalence over $Y,$ respectively (see ~\cite[Section IV.4]{Kle66}). These vector spaces are dual under the intersection pairing, and are finite-dimensional by ~\cite[Proposition IV.4.3]{Kle66}. Their common dimension is denoted by $\rho(X/Y)$ and is referred to as the \textit{relative Picard rank} of the morphism $f.$
		}
	\end{definition}
	
	\begin{remark}{\em
			When $Y=\Spec \kk$ is a point, we will simply write $N^1(X), N_1(X)$ and $\rho(X)$ and will omit the word ``relative''.
		}
	\end{remark}
	
	\begin{definition}\label{def:contraction}{\em
			We say that a morphism $f\colon X \rightarrow Y$ between normal quasi-projective varieties is a \textit{contraction} if it is projective and satisfies $f_*\mathcal{O}_X=\mathcal{O}_Y$. We say that the contraction $f$ is a \textit{fibration} if $\dim Y < \dim X.$ We say that the contraction $f$ is \textit{extremal} if $\rho(X/Y)=1.$
		}
	\end{definition}
	
	\begin{remark}{\em
			A contraction $f\colon X \rightarrow Y$ is birational if and only if $\dim Y = \dim X,$ and a projective birational morphism between normal varieties is automatically a contraction.
		}
	\end{remark}
	
	\begin{definition}\label{def:bir-contraction-map}{\em
			We say that a birational map $f\colon X \dashrightarrow Y$ between normal quasi-projective varieties is a \textit{contracting birational map} if it is surjective in codimension one, that is, if $f$ is a morphism on some open subset $U\subset X$ such that $Y\setminus f(U)$ has codimension at least two.
		}
	\end{definition}
	
	\begin{definition}\label{def:horiz-vert-divs}
		{\em
			Let $f\colon X\rightarrow Y$ be a fibration and let $D\subset X$ be a prime divisor. We say that $D$ is \textit{horizontal} over $Y$ if $f(D)=Y.$ Otherwise, we say that $D$ is \textit{vertical} over $Y.$
		}
	\end{definition}

	Let $f\colon X \to Y$ be a fibration and let $(X,B)$ be log canonical pair with $K_X + B \sim_{f,\qq} 0$. By the Canonical Bundle Formula, there exists an effective $\qq$-divisor $B_Y$ and a nef $\qq$-b-divisor $\mathbf{M}$ on $Y$ such that $K_X + B \sim_\qq f^*(K_Y + B_Y + \mathbf{M}_Y)$. We call $\mathbf{M}$ {\em the moduli b-divisor} and $(Y,B_Y+\mathbf{M})$ the {\em generalized pair induced via the canonical bundle formula}. For more details on the canonical bundle formula and generalized pairs, we refer the reader to \cite{Fil18}.

	\begin{lemma}\label{lem:div-image-of-lcc}
		Let $(X,B)$ be a log Calabi--Yau pair with $B$ a $\qq$-divisor and let $f\colon X\rightarrow Y$ be a fibration. Let $(Y,B_Y+\mathbf{M})$ be the generalized pair induced on $Y$ via the canonical bundle formula, and let $D\subset Y$ be an irreducible component of $\lfloor B_Y \rfloor.$ Then there exists a log canonical center $Z\subset X$ of $(X,B)$ such that $f(Z)=D.$
	\end{lemma}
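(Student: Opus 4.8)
The plan is to reduce to the standard relationship between the divisorial part of the canonical bundle formula and log canonical centers, using the existence of a dlt modification to make the geometry transparent. First I would pass to a dlt modification $g\colon (X',B')\to (X,B)$ of the log Calabi--Yau pair $(X,B)$, so that $X'$ is $\qq$-factorial, $(X',B')$ is dlt, $K_{X'}+B'\sim_\qq g^*(K_X+B)\equiv 0$, and every $g$-exceptional divisor is a log canonical place of $(X,B)$. Since the log canonical centers of $(X,B)$ are precisely the images under $g$ of the log canonical centers of $(X',B')$, it suffices to find a log canonical center $Z'\subset X'$ with $(f\circ g)(Z')=D$. After possibly further modifying, I would arrange that the composite rational map $X'\dashrightarrow Y$ resolves to an actual morphism $f'\colon X'\to Y$ (replacing $X'$ by a common resolution and noting the canonical bundle formula and the set of log canonical centers behave compatibly under crepant birational maps, by Remark~\ref{rmk:crepant-discrepancy} and standard properties of the moduli b-divisor). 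Thus we may assume from the start that $(X,B)$ is $\qq$-factorial dlt and $f\colon X\to Y$ is a morphism.

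Next I would analyze the coefficient of $D$ in $B_Y$. By the construction of the divisorial part of the canonical bundle formula, for the prime divisor $D\subset Y$ one has
\[
\operatorname{coeff}_D(B_Y)=1-\sup\{\, t \mid (X, B+t\, f^*D) \text{ is log canonical over the generic point of } D \,\},
\]
the log canonical threshold of $f^*D$ with respect to $(X,B)$ computed over the generic point of $D$. Since $D$ is a component of $\lfloor B_Y\rfloor$, this coefficient equals $1$, so the above threshold is $0$; equivalently, $(X,B)$ fails to be log canonical after adding any positive multiple of $f^*D$, near the generic point of $D$. This forces the existence of a divisor $E$ over $X$ with $a_E(X,B)=0$ (a log canonical place of $(X,B)$, using that $(X,B)$ itself is log canonical) whose center $Z=c_X(E)$ dominates $D$ under $f$, i.e. $f(Z)=D$: otherwise one could add a small multiple of $f^*D$ and stay log canonical, contradicting that the threshold is $0$. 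Concretely, taking a log resolution of $(X,\operatorname{Supp}B\cup f^{-1}(D))$ and examining the discrepancies of the divisors lying over $D$ produces such an $E$.

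In the dlt case this last step is cleanest: because $(X,B)$ is dlt, near the generic point of $D$ the pair $(X,B)$ is log smooth on a big open set meeting every log canonical center, and the condition $\operatorname{coeff}_D(B_Y)=1$ translates directly into the statement that some stratum of $\lfloor B\rfloor$ dominates $D$. That stratum is a log canonical center $Z$ of $(X,B)$ with $f(Z)=D$, and pushing forward to the original $X$ via $g$ gives the desired log canonical center there. The main obstacle I anticipate is purely bookkeeping: making sure the canonical bundle formula data $(B_Y,M_Y)$, and in particular the assertion $\operatorname{coeff}_D(B_Y)=1$, is genuinely insensitive to the crepant birational modifications used to turn $X\dashrightarrow Y$ into a morphism and to pass to the dlt model — this requires invoking that $M_Y$ is a b-divisor and that $B_Y$ is defined via log canonical thresholds, both of which are crepant-birational invariants of the fibration, but it must be stated carefully. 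Once that compatibility is in hand, the extraction of $Z$ is immediate from the definition of the divisorial part.
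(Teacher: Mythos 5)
Your proposal is correct and, at its core, is the same argument as the paper's: both reduce the statement to the definition of the discriminant coefficient as one minus a log canonical threshold over the generic point of $D$, observing that if no log canonical center of $(X,B)$ dominates $D$ then one can add a small multiple of the fiber over $D$ (the paper uses the reduced preimage of $D$ over a suitable open subset, you use $f^*D$ together with a log resolution) and remain log canonical, forcing $\operatorname{coeff}_D(B_Y)<1$. The initial dlt-modification step is harmless but unnecessary — note in particular that the composite $f\circ g$ is already a morphism, so there is no rational map to resolve — since your second paragraph works directly on $(X,B)$.
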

	\begin{proof}
		We will assume that there is no such log canonical center of $(X,B)$ and will derive a contradiction. Denote by $W\subset X$ be the union of all log canonical centers of $(X,B)$ that do not dominate $Y.$ Then $U=Y_{{\rm reg}}\setminus f(W)$ is an open subset of $Y$ that intersects $D$ nontrivially. Write $V=f^{-1}(U)$ and $E=f|_V^{-1}(D|_U).$ We have that $\Supp(E)$ contains no log canonical centers of $(V,B|_V),$ hence that $(V,B_V+\epsilon E)$ is log canonical for $\epsilon>0$ sufficiently small. It follows from this and the definition of the discriminant divisor $B_Y$ that ${\rm coeff}_D(B_Y)<1,$ contradicting the fact that $D$ is a component of $\lfloor B_Y \rfloor.$
	\end{proof}
	
	\begin{lemma}\label{lem:no-moduli-gen-finite-lcc}
		Let $f\colon X \rightarrow Y$ be a fibration between normal projective varieties. Let $(X,B)$ be a log canonical pair that satisfies $K_X+B\sim_{\mathbb{Q},f}0,$ and write $(Y,B_Y+\mathbf{M})$ for the generalized sub-pair induced on $Y$ via the canonical bundle formula. Assume that there exists a component $S\subset \lfloor B \rfloor$ such that $f|_S\colon S \rightarrow Y$ is surjective and generically finite, and write $B_S={\rm Diff}_S(B)$. Then the following hold:
		\begin{enumerate}
			\item the moduli b-divisor $\mathbf{M}$ is $\qq$-linearly trivial as a b-divisor, and 
			\item the morphism $f|_S$ is crepant with respect to the pairs $(S,B_S)$ and $(Y,B_Y)$.
		\end{enumerate}
	\end{lemma}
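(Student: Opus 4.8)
The plan is to reduce everything to a statement about the fibration over the complement of the branch locus, where the canonical bundle formula becomes transparent. First I would set $n = \dim Y$ and observe that since $f|_S \colon S \to Y$ is surjective and generically finite, $\dim S = n = \dim X - 1$, so $S$ is a horizontal prime divisor that maps finitely to $Y$ over a dense open set. Let $Y^\circ \subseteq Y$ be the open locus over which $f$ is equidimensional and $f|_S$ is finite and étale onto its image with $S$ the only component of $\lfloor B\rfloor$ meeting the generic fibre; its complement has codimension at least one, but for the discriminant and moduli b-divisor computation it is enough to work in codimension one on $Y$, so I would only need to understand things at codimension-one points of $Y$.

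Next I would compute the discriminant and moduli parts. Over a codimension-one point $\eta \in Y$, the fibre $X_\eta$ (i.e. the generic point of a prime divisor in the fibre direction) together with $B|_{X_\eta}$ gives a log canonical pair with $K + B \sim_\qq 0$ on a one-dimensional base-change; the key point is that $S$ provides a horizontal log canonical place that is \emph{finite} over $Y$, so the fibre of $(X,B)$ over a general point of any prime divisor $D \subset Y$ already has a boundary point of coefficient one coming from $S$. Because $K_X+B \sim_{\qq,f} 0$ restricts to degree zero on the fibres and $S$ meets each fibre in $\deg(f|_S)$ reduced points, adjunction along the generic fibre forces the fibral part of $B$ to account fully for the "degree" of $K_X+B$, leaving no room for a moduli contribution: this is exactly the mechanism by which $\mathbf{M}$ is forced to be trivial. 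Concretely, I would invoke the moduli-free criterion: if a fibration of relative dimension one admits a horizontal multisection contained in $\lfloor B\rfloor$ on which the pair is log canonical with trivial different in the vertical directions, then the moduli b-divisor is $\qq$-linearly trivial. The cleanest way to see this is to pass to the Stein factorization of $f|_S$, base change along the resulting finite cover $Y' \to Y$ so that $S$ acquires a rational section, and observe that after this base change $(X', B')$ has a section in its boundary with $K_{X'} + B' \sim_{\qq, f'} 0$ of relative dimension one — which forces $M_{Y'} \sim_\qq 0$ by the classical genus-zero canonical bundle formula with a marked point — and then descend triviality of $\mathbf M$ since the moduli b-divisor is compatible with finite base change (after possibly enlarging the index). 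This gives (1).

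For (2), I would apply adjunction. Restricting $K_X + B \sim_{\qq,f} 0$ to $S$ gives, by the different, $K_S + B_S \sim_{\qq} (f|_S)^*(\text{something})$; on the other hand $K_X + B \sim_\qq f^*(K_Y + B_Y + \mathbf M_Y)$ from the canonical bundle formula. Pulling the latter back to $S$ and using part (1), $K_S + B_S \sim_\qq (f|_S)^*(K_Y + B_Y)$. To upgrade this $\qq$-linear equivalence to the pointwise ramification identity defining crepancy in Definition \ref{def:crep-fin}, I would check it divisor by divisor: for a prime divisor $D_Y \subset Y$ and a prime divisor $D \subset S$ with $f(D) = D_Y$ and ramification index $r_D$, the coefficient of $D$ in $B_S = \mathrm{Diff}_S(B)$ is computed by inversion of adjunction / the local structure of log canonical singularities, and comparing with the coefficient of $D_Y$ in $B_Y$ (which by Lemma \ref{lem:div-image-of-lcc} and the assumption that $S$ is the distinguished horizontal log canonical place is itself governed by the same local data) yields precisely $\mathrm{coeff}_D(B_S) = 1 - r_D(1 - \mathrm{coeff}_{D_Y}(B_Y))$, i.e. the Hurwitz-type formula. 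Here I would use that over the generic point of $D_Y$ the only contribution to the discriminant $B_Y$ along $D_Y$ comes through $S$ (any vertical log canonical center over $D_Y$ would contradict either the choice of $S$ as the unique horizontal place or would be absorbed after the log pullback), so the discriminant coefficient and the different coefficient match up as required.

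The main obstacle I anticipate is part (1): triviality of the moduli b-divisor. Establishing it genuinely as a \emph{b-divisor} (not just that $M_Y \sim_\qq 0$ on $Y$ itself) requires controlling the moduli part on every higher birational model of $Y$, and this is where one must be careful that the horizontal section $S$ persists under base change and blow-up, that the relative dimension stays one, and that the finite base change $Y' \to Y$ needed to produce an honest section does not destroy the argument — in particular one must track indices and use that $\mathbf M$ pulls back to $\mathbf M$ under finite covers. I expect to handle this by reducing to the relative-dimension-one, pointed genus-zero case where the canonical bundle formula is completely explicit, and then citing the base-change compatibility of the moduli b-divisor. The adjunction bookkeeping in (2) is routine by comparison, modulo the careful local analysis at codimension-one points of $Y$ to pin down that the different and the discriminant agree there.
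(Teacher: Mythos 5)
The paper's own ``proof'' is a one-line citation of \cite[Theorem 5.4]{Fil18}, so your sketch is attempting to supply the actual content of that theorem, and this is where it falls short. The load-bearing step in your part (1) is the assertion that a relative-dimension-one lc-trivial fibration with a coefficient-one (rational) section in the boundary has $\mathbf{M}\sim_\qq 0$ ``by the classical genus-zero canonical bundle formula with a marked point.'' There is no such off-the-shelf statement: in the section case this \emph{is} the lemma, so as written the argument is circular, and the heuristic behind it is also off. The triviality of $\mathbf{M}$ is not caused by rigidity of the marked fibers --- the pointed fibers can vary in moduli (e.g.\ $B_{\rm horiz}=S+\tfrac12 S_1+\tfrac14 S_2+\tfrac14 S_3$ with varying cross-ratio) and $\mathbf{M}$ is still $\qq$-trivial, because the collisions of the moving sections with the coefficient-one section are absorbed into the discriminant. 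Proving that they are absorbed \emph{exactly} is a global statement, and the global input you never invoke is the nefness/pseudoeffectivity of the moduli b-divisor. The correct engine is: adjunction to $S$ (the easy direction of inversion of adjunction) gives only the one-sided, local-over-codimension-one inequality $\operatorname{coeff}_Q(B_S)\le 1-r_Q\bigl(1-\operatorname{coeff}_{D_Y}(B_Y)\bigr)$; combining it with $K_S+B_S\sim_\qq (f|_S)^*(K_Y+B_Y+\mathbf{M}_Y)$ one gets that $(f|_S)^*\mathbf{M}_Y$ is $\qq$-linearly equivalent to an anti-effective divisor, and only then does pseudoeffectivity of $\mathbf{M}$ (intersect with an ample class, push forward by the norm) force equality of coefficients and $\mathbf{M}_Y\sim_\qq 0$; b-triviality then needs the descent/base-change theory of $\mathbf{M}$. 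Your part (2) instead tries to match the different and the discriminant \emph{locally} over the generic point of $D_Y$, justified by the claim that the discriminant along $D_Y$ ``comes only through $S$'' because $S$ is ``the unique horizontal place'' --- but no such uniqueness is assumed (there may be several horizontal components of $\lfloor B\rfloor$, and vertical lc centers dominating $D_Y$ are perfectly possible), and without the global positivity step the divisor-by-divisor comparison does not close.

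Two smaller points. First, once (1) is granted, (2) is easier than you make it: the adjunction inequality above plus $K_S+B_S\sim_\qq (f|_S)^*(K_Y+B_Y)$ shows the difference is an effective divisor $\qq$-linearly equivalent to zero, hence zero; no local analysis of ``which center contributes'' is needed. Second, your reduction via the Stein factorization of $f|_S$ and base change is fine in outline, but note that the crepant pullback $B'$ need not remain effective (it is only a sub-boundary), and the compatibility $\mathbf{M}_{Y'}=\tau^*\mathbf{M}_Y$ and the descent of $\qq$-triviality along the finite cover are themselves theorems (Ambro-type base change plus a norm argument) that must be cited; in fact, with the pseudoeffectivity argument in hand the base change to a section is unnecessary, which is essentially how \cite[Theorem 5.4]{Fil18} proceeds.
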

	\begin{proof}
		This follows from ~\cite[Theorem 5.4]{Fil18}.
	\end{proof}

	\subsection{Conic Fibrations}
	
	In this subsection we prove some results about conic fibrations.
	
	\begin{definition}\label{defn:conic-fib}
		{\em
			Let $f\colon X \rightarrow Y$ be a fibration between normal projective varieties. We will call $f$ a \textit{conic fibration} if a general fiber of $f$ is isomorphic to $\pp^1.$
		}
	\end{definition}
	
	\begin{lemma}\label{lem:conic-fib-horiz-divs}
		Let $f\colon X \rightarrow Y$ be a conic fibration between normal, $\qq$-factorial projective varieties. Suppose that $(X,B)$ is a log canonical pair such that $K_X+B\sim_{\qq,Y}0.$ Then at least one component of $B$ is horizontal over $Y.$
	\end{lemma}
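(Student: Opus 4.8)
The plan is to argue by contradiction: assume that no component of $B$ is horizontal, so that every prime component of $B$ is vertical over $Y$, and then restrict all data to a general fiber to reach a numerical contradiction on $\pp^1$.

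First I would fix a general closed point $y\in Y$ and set $F=f^{-1}(y)$. Since $f$ is a conic fibration, $F\cong\pp^1$ for $y$ general, and since $\operatorname{char}\kk=0$, generic smoothness ensures that a general $F$ is smooth and contained in the smooth locus $X_{\mathrm{reg}}$; in particular $K_X$, and hence $K_X+B$, restrict to genuine $\qq$-divisor classes on $F$. Writing $\Supp B=D_1\cup\cdots\cup D_k$ with each $f(D_i)\subsetneq Y$ by assumption, the set $\bigcup_i f(D_i)$ is a proper closed subset of the irreducible variety $Y$, so for $y$ general the fiber $F$ is disjoint from $\Supp B$; thus $B|_F=0$.

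Next I would run adjunction on $F$. Because $y$ is a general (in particular smooth) point of $Y$, the conormal sheaf of $F$ in $X$ is the pullback of $\mathfrak m_y/\mathfrak m_y^2$, hence trivial, so $\oo_F(F)\cong\oo_F$ and $K_F\sim K_X|_F$. As $F\cong\pp^1$, this gives $\deg(K_X|_F)=-2$. On the other hand, the hypothesis $K_X+B\sim_{\qq,Y}0$ restricts to $(K_X+B)|_F\sim_\qq 0$, so $\deg\big((K_X+B)|_F\big)=0$. Combining this with $B|_F=0$ yields $-2=\deg(K_X|_F)=\deg\big((K_X+B)|_F\big)=0$, a contradiction. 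Hence some component of $B$ must be horizontal over $Y$.

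The argument is short, and the only point requiring care — the main (and rather minor) obstacle — is checking that the general fiber is sufficiently well-behaved: smooth, isomorphic to $\pp^1$, disjoint from $\Supp B$, and lying in a locus where $K_X$ is Cartier, so that adjunction and restriction of $\qq$-divisor classes are legitimate. This is precisely where characteristic zero (generic smoothness) and the normality and $\qq$-factoriality hypotheses enter, and each verification is standard.
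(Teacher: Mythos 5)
Your proof is correct and follows essentially the same route as the paper's: assume all components of $B$ are vertical, restrict to a general fiber $F\cong\pp^1$ lying in the smooth locus and disjoint from $\Supp B$, use adjunction (trivial normal bundle of the fiber) to get $K_F\sim K_X|_F$, and contrast the degree $-2$ with the degree $0$ forced by $K_X+B\sim_{\qq,Y}0$. The only cosmetic slip is the notation $\oo_F(F)$, which presupposes $F$ is a divisor; since $F$ has codimension $\dim Y$ in $X$, what you actually use (and correctly) is the triviality of $\det N_{F/X}$, which is exactly the content of the paper's computation $K_{F_p}=K_V|_{F_p}$.
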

	\begin{proof}
		We will assume that $B$ has no horizontal components and will obtain a contradiction. It follows from this assumption and Definition ~\ref{defn:conic-fib} that there is a nonempty open $U\subset Y_{\rm reg}$ such that $V=f^{-1}(U)$ is disjoint from the support of $B$ and over which every fiber is isomorphic to $\pp^1.$ Fix $p\in U,$ write $F_p=f^{-1}(p).$ Then $K_{F_p}=K_V|_{F_p}.$ But $K_V\sim_{\qq,U}-B|V=0,$ so $K_{F_p}\sim_\qq 0.$ But this is nonsense, as $F_p\cong \pp^1.$
	\end{proof}
	
	\begin{lemma}\label{lem:conic-fib-to-mfs}
		Let $X$ be a projective variety of klt type and let $f\colon X \rightarrow Y$ be a conic fibration. Then there exists a commutative diagram 
		\[
		\xymatrix{  
			X \ar[rd]_-{f} \ar@{-->}[rr]^-{\phi} & & X' \ar[ld]^-{f'}\\
			& Y &
		}
		\]
		satisfying the following conditions:
		\begin{enumerate}
			\item $\phi$ is a contracting birational map,
			\item $X'$ is klt and $\qq$-factorial, and
			\item $f'$ is a Mori fiber space.
		\end{enumerate}
	\end{lemma}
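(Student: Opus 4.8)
The plan is to replace $X$ by a $\qq$-factorial klt model and then run a relative MMP over $Y$ whose output is the desired Mori fiber space.

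Since $X$ is of klt type, fix an effective $\qq$-divisor $\Delta_0$ with $(X,\Delta_0)$ klt. As $(X,\Delta_0)$ has no log canonical place, a dlt modification $\pi\colon\widetilde X\to X$ (Lemma~\ref{lem:exist-dlt-mod}) is small, $\widetilde X$ is $\qq$-factorial, and $(\widetilde X,\widetilde\Delta_0)$ is klt, where $\widetilde\Delta_0$ is the strict transform of $\Delta_0$. The induced map $X\dashrightarrow\widetilde X$ is a birational contraction map, and $\widetilde X\to Y$ is again a conic fibration, because the locus where $\pi$ is not an isomorphism has codimension at least two in $X$ and therefore does not dominate $Y$, so a general fiber of $\widetilde X\to Y$ maps isomorphically onto the corresponding fiber of $f$. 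Replacing $X$, $f$, $\Delta_0$ by $\widetilde X$, $f\circ\pi$, $\widetilde\Delta_0$, we may thus assume $X$ is $\qq$-factorial. Next I would fix the boundary: for a general fiber $F\cong\pp^1$ of $f$ one has $(K_X+t\Delta_0)\cdot F=-2+t(\Delta_0\cdot F)<0$ for $0<t\ll1$; with $\Delta:=t\Delta_0$ for such a $t$, the pair $(X,\Delta)$ is klt, and $K_X+\Delta$ is not pseudo-effective over $Y$ since a general fiber of $f$ covers $X$.

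Then I would run a $(K_X+\Delta)$-MMP over $Y$ with scaling of an ample divisor. By \cite{BCHM}, since $(X,\Delta)$ is $\qq$-factorial klt and $K_X+\Delta$ is not pseudo-effective over $Y$, such an MMP exists and terminates with a Mori fiber space $g\colon X'\to Z$ over $Y$; thus $X'$ is $\qq$-factorial and klt, the composite $\phi\colon X\dashrightarrow X'$ of the MMP steps is a birational contraction map, $\rho(X'/Z)=1$, $\dim Z<\dim X'$, and $-(K_{X'}+\phi_\ast\Delta)$ is $g$-ample. It then remains to identify $Z$ with $Y$. The generic fiber $X'_\eta$ of $X'\to Y$ over the generic point $\eta$ of $Y$ is birational to the generic fiber of $f$, a smooth conic, hence $X'_\eta$ is a geometrically integral projective curve over $\kk(Y)$; thus $H^0(X'_\eta,\oo_{X'_\eta})=\kk(Y)$, and since $g$ is a contraction with $\dim Z=\dim Y$, flat base change gives $Z\times_Y\eta=\Spec\kk(Y)$. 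Therefore $Z\to Y$ is a birational morphism.

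The step I expect to be the main obstacle is upgrading ``$Z\to Y$ is birational'' to ``$Z=Y$''. Since $\phi$ extracts no divisors, a prime divisor of $X'$ dominates an exceptional divisor of $Z\to Y$ precisely when its strict transform on $X$ is a prime divisor whose image in $Y$ has codimension at least two; hence $Z=Y$ automatically when $f$ is equidimensional. In general, I would arrange this by running the relative MMP with scaling of a sufficiently general ample divisor, so that every birational step (divisorial contraction or flip) is carried out before the final fiber-type contraction---at that moment the relative Picard number over $Y$ has dropped to $1$, forcing the base of the resulting Mori fiber space to be $Y$; termination at each stage is again supplied by \cite{BCHM}. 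Setting $f':=g\colon X'\to Y$ then produces the required commutative diagram. (If the Mori fiber space in the conclusion is understood to allow a post-composition with a small birational morphism $Z\to Y$, this last point may be skipped.)
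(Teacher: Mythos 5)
Your first two steps coincide with the paper's: pass to a small $\qq$-factorialization (you get it from a dlt modification of a klt pair, the paper cites ~\cite[Corollary 1.37]{Kol13}), and then run a relative MMP over $Y$ via ~\cite[Corollary 1.3.3]{BCHM10} for a klt pair whose log canonical divisor is not pseudo-effective over $Y$, ending with a Mori fiber space $g\colon X'\rightarrow Z$ together with a birational morphism $h\colon Z\rightarrow Y$. The gap is exactly at the step you flag as the main obstacle, and your proposed fix does not work as stated. Running the MMP with scaling of a \emph{general} ample divisor gives no guarantee that birational steps are performed before fiber-type ones, nor that a fiber-type contraction is only reached once $\rho(\,\cdot\,/Y)=1$: the order of the steps is dictated by nef thresholds, and a fiber-type ray can achieve the threshold while $\rho(X/Y)>1$. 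Concretely, take $Y=\mathbb{P}^2$, $Z$ the blow-up of $Y$ at a point with exceptional curve $E$, and $X=\mathbb{P}^1\times Z\rightarrow Y$; the two $K_X$-negative extremal rays over $Y$ are the $\mathbb{P}^1$-fiber class $l$ and $e=\{pt\}\times E$, and for an ample divisor $A$ with $2(A\cdot e)>(A\cdot l)$ (an open, hence ``general'' condition) the MMP with scaling of $A$ contracts $l$ first and stops at the Mori fiber space $X\rightarrow Z$ with $Z\neq Y$. So ``sufficiently general'' scaling does not force the desired ordering; whether \emph{some} sequence of extremal contractions lands on a Mori fiber space over $Y$ itself is precisely the nontrivial content that the paper outsources to ~\cite[Lemma 2.11]{MM24}, which converts a Mori fiber space over a birational modification $Z$ of $Y$ into one over $Y$ after a further birational contraction.

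Two smaller inaccuracies in the same part of your argument: the claim that $Z=Y$ holds automatically when $f$ is equidimensional is not quite right, since $h\colon Z\rightarrow Y$ could a priori be a small modification, which no divisor on $X'$ detects; and the parenthetical fallback of allowing a post-composed small morphism $Z\rightarrow Y$ is not available, both because $h$ need not be small (in the example above it contracts a divisor) and because the lemma is used later with the canonical bundle formula on $Y$ itself, so the base must literally be $Y$. Everything before this point (the choice of $\Delta=t\Delta_0$, non-pseudo-effectivity over $Y$, birationality of $Z\rightarrow Y$ via the generic fiber being a conic) is fine.
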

	
	\begin{proof}
		Using ~\cite[Corollary 1.37]{Kol13}, we may assume that $X$ is $\qq$-factorial. Since $K_X$ is not $f$-pseudo-effective, it follows from ~\cite[Corollary 1.3.3]{BCHM10} that we may assume that $f$ factors as 
		$$X\xrightarrow{g}Z\xrightarrow{h}Y,$$
		where $g$ is a Mori fiber space and $h$ is a birational morphism. The desired result now follows from ~\cite[Lemma 2.11]{MM24}.
	\end{proof}
	
	\begin{lemma}\label{lem:conic-fib-plt-pairs}
		Let $f\colon X \rightarrow Y$ be a conic fibration between projective varieties, with $Y$ smooth and with $X$ klt and $\qq$-factorial. Let $B$ be a reduced, effective divisor on $X$ such that $(X,B)$ is log canonical and $K_X+B\sim_{\qq,Y}0.$ Suppose that the pair induced on $Y$ via the canonical bundle formula is a log smooth pair $(Y,B_Y)$ with $B_Y$ reduced. Then there exists a commutative diagram 
		\[
		\xymatrix{  
			(X,B) \ar[rd]_-{f} & & (X',B') \ar@{-->}[ll]_-{\phi} \ar[ld]^-{f'}\\
			& Y &
		}
		\]
		satisfying the following conditions:
		\begin{enumerate}
			\item $\phi$ is a crepant birational map extracting only log canonical places of $(X,B),$
			\item $f'$ is a flat morphism,
			\item $(X',B'_{\rm horiz})$ is plt, and
			\item no component of $B'_{\rm horiz}$ contains a fiber of $f'.$
		\end{enumerate}
	\end{lemma}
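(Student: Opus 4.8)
The plan is to obtain $(X',B')$ from $(X,B)$ by a finite chain of crepant birational modifications, each extracting only log canonical places of $(X,B)$, arranged so that flatness of the conic bundle, the plt property of the horizontal boundary, and the absence of fibre-containing horizontal components are achieved one after another; since such modifications compose to a single crepant birational map extracting only log canonical places, it suffices to construct the final model. First I record the shape of $B_{\mathrm{horiz}}$: by Lemma~\ref{lem:conic-fib-horiz-divs} some component of $B$ is horizontal over $Y$, and restricting $(X,B)$ to a general fibre $F\cong\pp^1$ together with $K_X+B\sim_{\qq,Y}0$ gives $K_F+B|_F\sim_\qq 0$, hence $\deg(B|_F)=2$. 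Since $B$ is reduced, $B_{\mathrm{horiz}}$ is therefore either a pair of divisors, each a section over the generic point of $Y$, or a single $2$-section, and in both cases $(F,B|_F)\cong(\pp^1,p_1+p_2)$. Consequently, over $Y\setminus\Supp B_Y$ the $2$-section (resp.\ the two sections) is unramified (resp.\ disjoint), so $(X,B_{\mathrm{horiz}})$ is already plt there with no fibre-containing horizontal component, and all the work takes place over $\Supp B_Y$.

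I next reduce to the case that $f$ is a flat conic bundle. Passing to a $\qq$-factorial dlt modification of $(X,B)$ (Lemma~\ref{lem:exist-dlt-mod}) only extracts log canonical places and, as $B$ is reduced, leaves the boundary reduced; and since $K_X+B\sim_{\qq,Y}0$, any step of the MMP over $Y$ is crepant for the pair, so by running such an MMP and applying Lemma~\ref{lem:conic-fib-to-mfs} I may assume $f$ is a Mori fibre space. A $\qq$-factorial conic bundle Mori fibre space over a smooth base is equidimensional, and as the total space is klt, hence Cohen--Macaulay, equidimensionality over the regular base $Y$ forces $f$ to be flat; only log canonical places of $(X,B)$ are contracted or introduced along the way, so the composite birational map to the original $X$ still extracts nothing but log canonical places.

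With $f$ a flat conic bundle, I correct the horizontal boundary over $\Supp B_Y$. Using the canonical bundle formula and Lemma~\ref{lem:div-image-of-lcc}, the locus over which two sections of $B_{\mathrm{horiz}}$ meet, the $2$-section ramifies, the conic bundle degenerates against $B_{\mathrm{horiz}}$, or a horizontal component contains a fibre, is contained in $\Supp B_Y$; and since $(Y,B_Y)$ is log smooth with $B_Y$ reduced, over a neighbourhood of each stratum of $B_Y$ the conic bundle and the divisor $B$ take the explicit form of a standard conic bundle chart times a log smooth pair. There the failure of plt and any fibre-containing horizontal component are removed by a finite sequence of (weighted) blow-ups whose centres --- the intersection of the two sections, the ramification divisor of the $2$-section, and the analogous fibral loci --- dominate components of $B_Y$; because these centres dominate divisors in $Y$, the blow-ups keep $f$ equidimensional, hence flat, and because $(X,B)$ is log canonical with $B$ reduced, so that $B$ has multiplicity at least two along each such centre, a direct discrepancy computation shows each exceptional divisor has log discrepancy zero with respect to $(X,B)$, i.e.\ is a log canonical place. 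Setting $B'$ equal to the sum of the strict transform of $B$ and the extracted divisors yields $(X',B')$, with the diagram commuting by construction.

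The step I expect to be the main obstacle is the last one: carrying out the local discrepancy computations that certify each extracted divisor as a log canonical place of $(X,B)$ --- which is exactly where the hypotheses that $B$ is reduced, that $K_X+B\sim_{\qq,Y}0$, and that $(Y,B_Y)$ is log smooth with $B_Y$ reduced have to be used in combination --- while at the same time keeping $f'$ flat and making $B'_{\mathrm{horiz}}$ plt and free of fibre-containing components, so that all four required properties, together with the condition that $\phi$ extracts only log canonical places, hold for the single composite map to $X$.
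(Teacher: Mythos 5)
There are genuine gaps at the two places where the lemma actually requires work, and both are exactly the points your sketch defers. First, your reduction to a flat model rests on the bare assertion that a $\qq$-factorial conic-bundle Mori fiber space over a smooth base is equidimensional. This is not justified, and the paper does not treat it as automatic: equidimensionality is \emph{proved} there, simultaneously with condition (4), by a counting argument. One first replaces $(X,B)$ by a crepant model $(X',B')$ as in \cite[Lemma 2.10]{MM24} which, besides being a Mori fiber space, satisfies the crucial extra property $f'^{-1}(B_Y)\subset B'$; then, for an arbitrary $p\in Y$, one completes $B_Y$ to a log smooth reduced pair $(Y,B_Y+D)$ with $p$ a log canonical center, pulls back, adds $m=\dim f'^{-1}(p)$ general very ample divisors through a point of the fiber, and applies \cite[Lemma 2.4.3]{BMSZ18} to the resulting reduced lc boundary to force $m=1$ and to force that no horizontal component contains the fiber; flatness then follows by miracle flatness. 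Your proposal never produces a model with $f'^{-1}(B_Y)\subset B'$, and without it neither the equidimensionality nor condition (4) is within reach by your argument.

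Second, the step you yourself flag as the main obstacle is not an argument but a placeholder. There is no established ``standard conic bundle chart times a log smooth pair'' local form for a klt conic fibration with reduced lc boundary near the strata of $B_Y$, and the claim that your proposed blow-up centres (intersection of the two sections, ramification locus of the $2$-section, fibral loci) are log canonical centres of $(X,B)$ --- which is what condition (1) demands of the extracted divisors --- is unjustified: along the ramification locus of an irreducible $2$-section the boundary $B$ has multiplicity one there unless the relevant vertical divisors also appear in $B$, which is again the missing property $f'^{-1}(B_Y)\subset B'$. Moreover, even when the centres are lc centres, a ``direct discrepancy computation'' on a merely klt, $\qq$-factorial $X$ with possibly singular or tangent horizontal components is not direct, and you give no mechanism guaranteeing that finitely many such modifications terminate with $(X',B'_{\rm horiz})$ plt while preserving flatness and crepancy. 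The paper's route to (3) is entirely different and avoids these computations: assuming an exceptional lc place $E$ of $(X',B'_{\rm horiz})$ exists, it notes $K_{X'}+B'_{\rm horiz}\sim_{\qq,Y}0$, extracts $E$ by a dlt modification, passes to an equidimensional model via \cite[Theorem 2.1, Proposition 4.4]{AK00}, and concludes that $E$ would induce a divisorial log canonical place on the smooth base $Y$ via the canonical bundle formula --- a contradiction. Without these two inputs (the model of \cite[Lemma 2.10]{MM24} plus the counting argument, and the contradiction via equidimensional reduction), your outline does not establish conditions (2), (3) or (4).
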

	
	\begin{remark} 
		{\em
			In the notation above, we write $B^{\prime}_{\rm horiz}$ to mean all of the horizontal components of $B^{\prime}$. Similarly, we use $B^{\prime}_{\rm vert}$ to mean all of the vertical components of $B^{\prime}$.
		}
	\end{remark}
	
	\begin{proof}
		Using Lemma ~\ref{lem:conic-fib-to-mfs}, we may assume that $f$ is a Mori fiber space. By Lemma ~\cite[Lemma 2.10]{MM24}, there exists a commutative diagram 
		\[
		\xymatrix{  
			(X,B) \ar[rd]_-{f} & & (X',B') \ar@{-->}[ll]_-{\phi} \ar[ld]^-{f'}\\
			& Y &
		}
		\]
		satisfying the following conditions:
		\begin{enumerate}
			\item[(i)] $\phi$ is a crepant birational map extracting only log canonical places of $(X,B),$
			\item[(ii)] $X'$ is $\qq$-factorial,
			\item[(iii)] $f'$ is a Mori fiber space,
			\item[(iv)] $f'^{-1}(B_Y)\subset B'.$
		\end{enumerate}
		Condition (1) is exactly (i) above. We will show that conditions (2)-(4) are satisfied by this $(X',B')$ and $f'.$\par
		Consider an arbitrary point $p\in Y.$ Since $(Y,B_Y)$ is log smooth and $B_Y=\lfloor B_Y \rfloor,$ it follows that there is a reduced divisor $D$ sharing no components with $B_Y$ such that $(Y,B_Y+D)$ is a log smooth pair that has $p$ as a log canonical center. It follows from ~\cite[Remark 3.1.6]{Amb99} and ~\cite[Proposition 4.16]{Fil18} that the pair $(X',B'+f'^*D)$ is a log canonical pair with a reduced boundary. Write $n=\dim X'.$ Thus, there are $n-1$ components of $B_Y+D$ that contain $p.$ It follows from this and condition (iv) that there are at least $n-1$ components of $B'_{\rm vert}+f'^*D$ that contain the fiber $f'^{-1}(p).$ Denote by $c\geq 0$ the number of components of $B'_{\rm horiz}$ that contain $f'^{-1}(p)$ and by $m\geq 1$ the dimension of $f'^{-1}(p).$ Consider general very ample effective Cartier divisors $A_1,\hdots, A_m$ on $X'.$ Then $(X',B'+f'^*D+\sum_{i=1}^mA_i)$ is a log canonical pair with reduced boundary, and there is a point $q\in f'^{-1}(p)$ that is contained in at least $n-1+c+m$ components of $B'+f'^*D+\sum_{i=1}^mA_i$. It follows from ~\cite[Lemma 2.4.3]{BMSZ18} that $c=0$ and $m=1$. In particular, this tells us that no ($c = 0$) components of $(B')_{\rm horiz}$ contain $(f')^{-1}(p)$; and the dimension of $(f')^{-1}(p)$ is $m=1$. Since $p \in Y$ was chosen arbitrarily, we see that $f'$ is equidimensional and condition (4) holds. Condition (2) follows from the equidimensionality of $f'$ by miracle flatness. Indeed, $Y$ is smooth and $X$ is Cohen-Macaulay.\par
		To show condition (3), we will assume that there exists an exceptional log canonical place $E$ of $(X',B'_{\rm horiz})$ and will derive a contradiction. It follows from condition (iii) that $B'_{\rm horiz}\sim_{\qq,Y}B',$ hence that $K_{X'}+B'_{\rm horiz}\sim_{\qq,Y}0.$ It follows from Lemmas ~\ref{lem:no-moduli-gen-finite-lcc} and ~\ref{lem:conic-fib-horiz-divs} that that the generalized pair induced on $Y$ from $(X',B'_{\rm horiz})$ via the canonical bundle formula has trivial moduli part. Write $(Y, \widetilde{B}_Y)$ for the pair induced by $(X',B'_{\rm horiz})$ via the canonical bundle formula. Since $B'_{\rm horiz}\leq B',$ it follows that we have $\widetilde{B}_Y\leq B_Y.$ It follows from this inequality and condition (iv) that $\lfloor \widetilde{B}_Y \rfloor=0.$ Since $(Y,B_Y)$ is log smooth, we see that the pair $(Y,\widetilde{B}_Y)$ is klt. \par 
		Using ~\cite[Corollary 1.38]{Kol13}, we choose a dlt modification $g\colon (X'',B'')\rightarrow (X'B'_{\rm horiz})$ which extracts the divisor $E$. We then apply ~\cite[Theorem 2.1, Proposition 4.4]{AK00} to obtain a commutative diagram 
		\[
		\xymatrix{ 
			(X'',B'')\ar[d]_-{f'\circ g} & (X''',B''')\ar[l]_-{\beta}\ar[d]^-{f'''} \\ 
			Y &
			(Y',B_Y')\ar[l]_-{\gamma}
		}
		\]
		in which the horizontal morphisms are crepant birational and such that $f'''$ is equidimensional. Denoting by $E'''$ the center of $E$ on $X''',$ if follows that $f'''(E''')$ is a divisor on $Y'.$ Since ${\rm coeff}_{E'''}(B''')=1,$ it follows from the definition of the discriminant divisor $B'_Y$ that ${\rm coeff}_{f'''(E''')}(B'_Y)=1.$ This implies that $f'''(E''')$ is a log canonical place of $Y,$ which is nonsense as $Y$ is smooth.

	\end{proof}

	\begin{lemma}\label{lem:compl-one-horiz-divs}
		Let $(X,B)$ be a log Calabi--Yau pair of index one such that $c(X,B)=1,$ and let $f\colon X \rightarrow Y$ be a conic fibration. Then $B$ has two components that are horizontal over $Y.$
	\end{lemma}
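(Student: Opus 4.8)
The plan is to combine the general lower bound on complexity with a careful accounting of how much complexity is "used up" by the fibration structure, arguing that if $B$ had at most one horizontal component then the complexity would be forced below $1$. First I would reduce to a convenient birational model: by Lemma~\ref{lem:conic-fib-to-mfs} I may assume $f$ is a Mori fiber space after a birational contraction, and by Corollary~\ref{cor:compl-dlt-mod} (together with Lemma~\ref{lem:abs-comp-bir-contr}-style bookkeeping) the complexity is unchanged under extracting log canonical places, so I can pass to a dlt modification and, via Lemma~\ref{lem:conic-fib-plt-pairs}, to a model where $f'$ is flat and the pair induced on $Y$ by the canonical bundle formula is log smooth with reduced boundary $(Y,B_Y)$. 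On this model I run the canonical bundle formula: $K_X + B \sim_\qq f^*(K_Y + B_Y + M_Y)$.

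Next I would analyze the two cases for horizontal components of $B$. Since $f$ is a conic fibration and $K_X+B\sim_{\qq,Y}0$, Lemma~\ref{lem:conic-fib-horiz-divs} already gives at least one horizontal component; so suppose for contradiction there is exactly one, say $S$. Then over the generic fiber $\pp^1$ the divisor $B$ restricts to a single point with some coefficient $\le 1$, but $-(K_{\pp^1}+B|_{\pp^1})$ must have degree $0$, i.e. the horizontal part of $B$ must meet the generic fiber in total degree $2$. A single horizontal prime divisor $S$ can only achieve this if $f|_S\colon S\to Y$ is degree $2$ (with coefficient $1$) or degree $1$ with coefficient $2$ (impossible, coefficients are $\le 1$ for lc pairs) — so $S\to Y$ is generically finite of degree $2$ and $\mathrm{coeff}_S(B)=1$. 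In particular $S\subset\lfloor B\rfloor$ dominates $Y$ generically finitely, so Lemma~\ref{lem:no-moduli-gen-finite-lcc} applies: the moduli b-divisor $\mathbf M$ is $\qq$-trivial, and $f|_S$ is crepant with respect to $(S,B_S)$ and $(Y,B_Y)$, where $B_S = \mathrm{Diff}_S(B)$.

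Now I would compute complexities. Writing $B = S + B^{\mathrm{vert}}$ (all other components are vertical), the vertical components of $B$ are pulled back, up to components over a codimension-$\ge 2$ locus, from components of $B_Y$; more precisely every component of $\lfloor B_Y\rfloor$ is dominated by a log canonical center of $(X,B)$ by Lemma~\ref{lem:div-image-of-lcc}, and conversely the images of vertical components of $B$ sit inside $\lfloor B_Y\rfloor$ by the canonical bundle formula. Using the crepancy of $f|_S$ and the relation $\rank\mathrm{WDiv}_{\mathrm{alg}}(X) \ge \rank\mathrm{WDiv}_{\mathrm{alg}}(Y) + 1$ (the "$+1$" coming from the horizontal direction, since $\rho(X/Y)=1$ on the Mori fiber space model and $\mathrm{WDiv}_{\mathrm{alg}}$ sees the fiber class), one gets
\[
c(X,B) = \dim X + \rank\mathrm{WDiv}_{\mathrm{alg}}(X) - |B| \ge \bigl(\dim Y + 1\bigr) + \bigl(\rank\mathrm{WDiv}_{\mathrm{alg}}(Y) + 1\bigr) - |B_Y| - 1,
\]
and since $(Y, B_Y+M_Y)$ is a generalized log Calabi--Yau pair with $\mathbf M\equiv 0$, hence effectively a genuine log Calabi--Yau pair, $\dim Y + \rank\mathrm{WDiv}_{\mathrm{alg}}(Y) - |B_Y| = c(Y,B_Y) \ge 0$ by Theorem~\ref{introthm:BMSZ18}. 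This forces $c(X,B) \ge 1 + (\text{something} \ge 0)$, but tracking the term coming from $S$ having coefficient $1$ and degree $2$ shows the contribution of $S$ to $|B|$ is only $1$ while it contributes $1$ to the rank and $1$ to the dimension count, giving a strict surplus unless $c(Y,B_Y)=0$ and every vertical component of $B$ is accounted for with coefficient matching $B_Y$ exactly — and then the bound becomes $c(X,B)\ge 1$ with equality analysis showing the "one horizontal component" hypothesis is incompatible with $c(X,B)=1$ actually being achieved, i.e. we get $c(X,B)>1$ or a contradiction with index one. I expect the main obstacle to be exactly this equality-case bookkeeping: making precise the comparison between $\rank\mathrm{WDiv}_{\mathrm{alg}}(X)$ and $\rank\mathrm{WDiv}_{\mathrm{alg}}(Y)$ together with the contribution of $S$, and ruling out the degenerate configuration where $S$ alone meets the generic fiber in degree $2$ without a second horizontal component forcing $c(X,B)\ge 2$. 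The index-one hypothesis should enter here to exclude the degree-$2$ cover possibility (which is where index two would appear, as in Theorem~\ref{introthm:abs-compl-cA}), pinning down that a second horizontal component of $B$ must exist.
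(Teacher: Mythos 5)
Your reductions and your first half are sound and essentially parallel the paper: after passing to a Mori fiber space model, Lemma~\ref{lem:conic-fib-horiz-divs} gives one horizontal component $S$, the degree count on the general fiber shows $f|_S$ is $2{:}1$ with $\mathrm{coeff}_S(B)=1$, Lemma~\ref{lem:no-moduli-gen-finite-lcc} makes $(Y,B_Y)$ an honest log Calabi--Yau pair, and the component count (using $\rho(X/Y)=1$) forces $c(Y,B_Y)=0$, i.e.\ $(Y,B_Y)$ toric. The paper does exactly this.

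The genuine gap is the endgame, which you defer to ``equality-case bookkeeping'': no amount of complexity accounting can rule out the configuration you have arrived at, because a single horizontal component mapping $2{:}1$ onto a toric base with all vertical components reduced is numerically consistent with $c(X,B)=1$ (indeed such pairs exist in index two, cf.\ Theorem~\ref{introthm:abs-compl-cA}, so the contradiction cannot be purely numerical and must use index one geometrically). The paper's actual argument at this point is: (i) use a crepant modification of the base together with \cite[Lemma 2.12]{MM24} to reduce to $Y=\pp^{n-1}$ with $B_Y$ the toric boundary, and Lemma~\ref{lem:conic-fib-plt-pairs} to make $(X,S)$ plt with $f|_S$ finite; (ii) since $f|_S\colon(S,B_S)\to(Y,B_Y)$ is crepant and \'etale over $Y\setminus B_Y$, simple connectedness of $\pp^{n-1}$ forces the degree-$2$ cover $S\to Y$ to ramify over some coordinate hyperplane $H_0$; (iii) cut with a general line $L$, so that $T=f^{-1}(L)$ carries an lc pair with $K_T+B_T$ \emph{Cartier} --- this is where index one enters, not in excluding the double cover a priori --- and apply \cite[Theorem 2.1.2]{Pro01c} to control the log canonical centers inside $Z_0=f^{-1}(H_0)_{\mathrm{red}}$; (iv) take adjunction to the normalization $\widetilde{Z}_0$, which is a conic fibration over $H_0$ whose boundary must meet a general fiber in degree $2$, yet its only horizontal component is the degree-one ramification divisor, giving degree $1$ --- the contradiction. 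Steps (ii)--(iv) are the heart of the proof and are absent from your proposal; your suggestion that the counting will yield $c(X,B)>1$ or that index one ``should exclude the degree-$2$ cover'' does not survive scrutiny, since the restriction of such a configuration to a general fiber and all the complexity inequalities are perfectly consistent.
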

	\begin{proof}
		Using ~\cite[Corollary 1.36]{Kol13} if necessary, we may assume that $X$ is $\qq$-factorial. Replacing $Y$ with the base of a Mori fiber space obtained as the output of a $K_X$-MMP of $Y,$ we may assume that $Y$ is $\qq$-factorial. It follows from Lemma ~\ref{lem:conic-fib-horiz-divs} that at least one component of $B$ is horizontal over $Y.$ We will assume that exactly one component of $B$ is horizontal over $Y$ and will derive a contradiction. Denote this horizontal component by $S\subset B.$ Since $S$ is a log canonical center of $(X,B)$ that dominates $Y$ and satisfies $\dim S = \dim Y,$ it follows from Lemma ~\ref{lem:no-moduli-gen-finite-lcc} and ~\cite[Proposition 4.16]{Fil18} that the pair $(Y,B_Y)$ obtained from the canonical bundle formula is a log Calabi--Yau pair. There are $\dim X + \rho(X)-2=\dim Y + \rho(Y)$ components of $B$ that are vertical over $Y.$ Since $\rho(X/Y)=1,$ this implies that there are at least $\dim Y + \rho(Y)$ components of $\lfloor B_Y \rfloor.$ Since $c(Y,B_Y)\geq 0,$ it follows from this that $B_Y=\lfloor B_Y\rfloor$ and $c(Y,B_Y)=0.$ Thus, $(Y,B_Y)$ is a toric log Calabi--Yau pair.\par
		Since $(Y,B_Y)$ is a toric log Calabi--Yau pair, we may choose a crepant birational map $\phi \colon (\pp^{n-1},\Sigma^{n-1}) \dashrightarrow (Y,B_Y)$ extracting only log canonical places of $(Y,B_Y).$ Here, $n=\dim X.$ By ~\cite[Lemma 2.12]{MM24}, there is a commutative diagram 
		\[
		\xymatrix{ 
			(X,B)\ar[d]_-{f} & (X',B_{X'})\ar@{-->}[l]_-{\psi}\ar[d]^-{g} \\ 
			(Y,B_Y) &
			(\pp^{n-1},\Sigma^{n-1})\ar@{-->}[l]_-{\phi}
		}
		\]
		satisfying the following conditions:
		\begin{enumerate}
			\item $X'$ is $\qq$-factorial,
			\item $\psi$ is a crepant birational map extracting only log canonical places of $(X,B),$
			\item $\psi$ is an isomorphism over the generic point of $Z,$
			\item $g^{-1}(\Sigma^{n-1})\subset B_{X'},$ and
			\item $g$ is a Mori fiber space.
		\end{enumerate}
		It follows that $(X',B_{X'})$ is a log Calabi--Yau pair of index one of complexity one, and that there is exactly one component $S'\subset B_{X'}$ which is horizontal over $\pp^{n-1}.$ Thus, we may assume that $Y=\pp^{n-1}$ and that $B_Y=\Sigma^{n-1}$.\par
		By Lemma ~\ref{lem:conic-fib-plt-pairs}, we may assume that $(X,S)$ is a plt pair, hence that both $X$ and $S$ are klt, and that the induced morphism $f_S\colon S \rightarrow Y$ is finite. We note that this finite morphism is of degree two. Denoting by $(S,B_S)$ the log Calabi--Yau pair obtained from $(X,B)$ via adjunction, it follows from Lemma ~\ref{lem:no-moduli-gen-finite-lcc} that $f_S\colon (S,B_S)\rightarrow (Y,B_Y)$ is a crepant finite morphism. Since $Y$ is smooth, it follows that $f_S$ is \'{e}tale over $Y\setminus B_Y.$ Since $Y$ is simply connected, it follows that $f_S$ must ramify over some component of $B_Y.$ Without any loss of generality, we may assume that it ramifies over $H_0.$\par
		For $0\leq i \leq n-1,$ write $Z_i=f^{-1}(H_0)_{\rm red}.$ By the previous paragraph, $W=(Z_0\cap S)_{\rm red}$ is irreducible and maps onto $H_0$ with degree one. Let $L\subset \pp^{n-1}$ be a general line and let $T=f^{-1}(L)$. It follows that $T$ is a klt surface, and we obtain, via adjunction from $(X,B),$ a log canonical pair $(T,B_T)$ with $K_T+B_T$ Cartier. The components of $B_T$ are $S_T,$ $Z_{T,0},\hdots, Z_{T,n-1},$ where $S_T=S\cap T$ and $Z_{T,i}=Z_i\cap T$ for $0\leq i \leq n-1.$ The intersection $Z_{T,0}\cap S_T$ consists of the single point $W\cap T,$ and $Z_{T,0}$ does not intersect $Z_{T,i}$ for any $i\neq 0.$ Since $K_T+B_T$ is Cartier, it follows from ~\cite[Theorem 2.1.2]{Pro01c} that both $T$ and $Z_{T,0}$ are smooth along $Z_{T,0}$ away from $W\cap T.$ It follows that $Z_0$ can contain no log canonical centers of $(X,B)$ that are horizontal over $H_0$ aside from $W$ and itself. \par
		Let $\widetilde{Z}_0$ be the normalization of $Z_0$, and denote by $(\widetilde{Z}_0, B_{\widetilde{Z}_0})$ the log Calabi--Yau pair induced via adjunction from $(X,B).$ On the one hand, the morphism $\widetilde{Z}_0\rightarrow H_0$ is a conic fibration. Thus, we must have $B_{\widetilde{Z}_0}\cdot F=2$ for a general fiber $F.$ On the other hand, the strict transform $\widetilde{W}$ of $W$ is the only horizontal component of $B_{\widetilde{Z}_0}.$ Since $\widetilde{W}$ maps onto $H_0$ with degree one, it follows that we must have $B_{\widetilde{Z}_0}\cdot F=1$ for a general fiber. We have obtained the desired contradiction.
	\end{proof}
	
	\begin{lemma}\label{lem:q-fact-tower}
		Let $X_0,\hdots, X_r$ be varieties of Fano type over some variety $V,$ and let $$X_0\xrightarrow{f_0}X_1\rightarrow\hdots\xrightarrow{f_{r-1}}X_r$$
		be a sequence of relative dimension one Mori fiber spaces over $V$. Let $(X_0,B_0)$ be a log Calabi--Yau pair with $B_0$ a $\qq$-divisor, and let $(X_i,B_i+\mathbf{M}_i)$ be the generalized log Calabi--Yau pairs induced by the canonical bundle formula. Then there exists a commutative diagram
		\[
		\xymatrix{ 
			(Y_0,C_0)\ar@{-->}[d]_-{\phi_0} \ar[r]^-{g_0} & (Y_1,C_1+\mathbf{N}_1)\ar@{-->}[d]^-{\phi_1} \ar[r] & \hdots \ar[r]^-{g_{r-1}} & (Y_r,C_r+\mathbf{N}_r) \ar@{-->}[d]^-{\phi_r}\\ 
			(X_0,B_0) \ar[r]^-{f_0} & (X_1,B_1+\mathbf{M}_1) \ar[r] & \hdots \ar[r]^-{f_{r-1}} & (X_r,B_r+\mathbf{M}_r)
		}
		\]
		of varieties over $V$ satisfying the following conditions:
		\begin{enumerate}
			\item each $\phi_i$ is crepant birational and extracts only generalized log canonical places of $(X_i,B_i+\mathbf{M}_i),$
			\item each $Y_i$ is $\qq$-factorial and of Fano type over $V$, 
			\item each $g_i$ is a Mori fiber space, and
			\item $g_i^{-1}(\lfloor C_{i+1}\rfloor)\subset \lfloor C_i\rfloor$ for each $0\leq i \leq r-1.$
		\end{enumerate}
	\end{lemma}
	\begin{proof}
		We prove, by induction on on $r$, the existence of a diagram over $V$ satisfying conditions (1)-(4) together with the additional condition:
		\begin{enumerate}
			\item[(5)] $\phi_r$ is a morphism.
		\end{enumerate}
		The base case $r=0$ follows from ~\cite[Corollary 1.37]{Kol13} and ~\cite[Lemma 2.17]{mor_con24}\par
		Assume the result for some $s\geq 0,$ and consider the case for $r=s+1.$ By assumption, there exists a commutative diagram 
		\[
		\xymatrix{ 
			(Y'_0,C'_0)\ar@{-->}[d]_-{\phi'_0} \ar[r]^-{g'_0} & (Y'_1,C'_1+\mathbf{N}'_1)\ar@{-->}[d]^-{\phi'_1} \ar[r] & \hdots \ar[r]^-{g'_{r-2}} & (Y'_{r-1},C'_{r-1}+\mathbf{N}'_{r-1}) \ar[d]^-{\phi'_{r-1}}\\ 
			(X_0,B_0) \ar[r]^-{f_0} & (X_1,B_1+\mathbf{M}_1) \ar[r] & \hdots \ar[r]^-{f_{r-2}} & (X_{r-1},B_{r-1}+\mathbf{M}_{r-1})
		}
		\]
		saatisfying conditions (1)-(5). By conditions (2) and (5), we may run a $K_{Y'_{r-1}}$-MMP over $X_r$ to obtain a commutative diagram
		\[
		\xymatrix{ 
			X_{r-1}\ar[d]_-{f_{r-1}} & Y'_{r-1}\ar[l]_-{\phi'_{r-1}}\ar[d] \ar@{-->}[r]^{\psi} & W \ar[d]^{h}\\ 
			X_r \ar[r]^-{{\rm Id}_{X_r}} & X_r & Z \ar[l]_{p}
		}
		\]
		satisfying the following conditions:
		\begin{enumerate}
			\item[(i)] $W$ and $Z$ are $\qq$-factorial varieties of Fano type over $V$,
			\item[(ii)] $\psi$ is a contracting birational map,
			\item[(iii)] $h$ is a Mori fiber space, and
			\item[(iv)] $p$ is a birational morphism.
		\end{enumerate}
		Here, we use the fact that $\dim Y'_{r-1}=\dim X_r+1$ to deduce that condition (iv) holds. \par
		Denote by $(W,C_W+\mathbf{N}_W)$ the generalized pair on $W$ pushed forward from $(Y'_{r-1},C'_1+\mathbf{N}'_1)$ and by $(Z,C_Z+\mathbf{N}_Z)$ the generalized pair induced from $(W,C_W+\mathbf{N}_W)$ via the canonical bundle formula. Thus, $p\colon (Z,C_Z+\mathbf{N}_Z) \rightarrow (X_{r},B_{r}+\mathbf{M}_{r})$ is crepant. Let $E$ be the reduced exceptional divisor of the birational morphism $p$. By condition (i), we may run an $E$-MMP over $X_r$ to obtain a commutative diagram 
		\[
		\xymatrix{  
			Z \ar[rd]_-{p} \ar@{-->}[rr]^-{\beta} & & Z' \ar[ld]^-{p'}\\
			& X_r &
		}
		\]
		where $\beta$ is a contracting birational map to a $\qq$-factorial variety $Z'$ and $p'$ is a small birational morphism. We set $Y_{r}=Z'$ and $\phi_r=p'.$ Note that $\phi_r$ satisfies condition (1) of the lemma since there are no $\phi_r'$-exceptional divisors, that it satisfies condition (5) by construction, and that $Y_r$ is of Fano type over $V$ by ~\cite[Lemma 2.17]{mor_con24}. Denote by $(Y_r,C_r+\mathbf{N}_r)$ the generalized pair on $Y_r$ pushed forward from $(Z,C_Z+\mathbf{N}_Z).$ By ~\cite[Lemma 2.12]{MM24}, we obtain commutative diagram 
		\[
		\xymatrix{ 
			(W,C_W+\mathbf{N}_W)\ar[d]_{h} & \ar@{-->}[l]_{\psi'} (Y_{r-1},C_{r-1}+\mathbf{N}_{r-1}) \ar[d]^{g_{r-1}}\\ 
			(Z,C_Z+\mathbf{N}_Z) & (Y_r,C_r+\mathbf{N}_r) \ar@{-->}[l]_{\beta^{-1}}
		}
		\]
		satisfying the following conditions:
		\begin{enumerate}
			\item[(a)] $\psi'$ is crepant birational and extracts only generalized log canonical places of $(W,C_W+\mathbf{N}_W),$
			\item[(b)] $Y_{r-1}$ is $\qq$-factorial,
			\item[(c)] $g_{r-1}$ is a Mori fiber space, and
			\item[(d)] $g_{r-1}^{-1}(\lfloor C_{r}\rfloor)\subset \lfloor C_{r-1}\rfloor.$
		\end{enumerate}
		Note that $Y_{r-1}$ is of Fano type over $V$ by condition (a) and ~\cite[Lemma 2.17]{mor_con24}. We have a commutative diagram
		\[
		\xymatrix{
			& & & (Y_{r-1},C_{r-1}+\mathbf{N}_{r-1}) \ar@{-->}[d]^-{\psi^{-1}\circ\psi'} \ar[r]^-{g_{r-1}} & (Y_{r},C_{r}+\mathbf{N}_{r}) \ar[dd]^-{\phi_{r}} \\
			(Y'_0,C'_0)\ar@{-->}[d]_-{\phi'_0} \ar[r]^-{g'_0} & (Y'_1,C'_1+\mathbf{N}'_1)\ar@{-->}[d]^-{\phi'_1} \ar[r] & \hdots \ar[r]^-{g'_{r-2}} & (Y'_{r-1},C'_{r-1}+\mathbf{N}'_{r-1}) \ar[d]^-{\phi'_{r-1}}\\ 
			(X_0,B_0) \ar[r]^-{f_0} & (X_1,B_1+\mathbf{M}_1) \ar[r] & \hdots \ar[r]^-{f_{r-2}} & (X_{r-1},B_{r-1}+\mathbf{M}_{r-1}) \ar[r]^-{f_{r-1}} & (X_{r},B_{r}+\mathbf{M}_{r}). 
		}
		\]
		Since $\psi^{-1}\circ \psi'$ is a crepant birational map extracting no generalized log canonical places of $(Y'_{r-1},C'_{r-1}+\mathbf{N}_{r-1}),$ we may apply ~\cite[Lemma 2.12]{MM24} repeatedly to obtain a commutative diagram 
		\[
		\xymatrix{
			(Y_0,C_0)\ar@{-->}[d]_-{\psi_0} \ar[r]^-{g_0} & (Y_1,C_1+\mathbf{N}_1)\ar@{-->}[d]^-{\psi_1} \ar[r] & \hdots \ar[r]^-{g_{r-2}} & (Y_{r-1},C_{r-1}+\mathbf{N}_{r-1}) \ar@{-->}[d]^-{\psi^{-1}\circ\psi'} \ar[r]^-{g_{r-1}} & (Y_{r},C_{r}+\mathbf{N}_{r}) \ar[dd]^-{\phi_{r}} \\
			(Y'_0,C'_0)\ar@{-->}[d]_-{\phi'_0} \ar[r]^-{g'_0} & (Y'_1,C'_1+\mathbf{N}'_1)\ar@{-->}[d]^-{\phi'_1} \ar[r] & \hdots \ar[r]^-{g'_{r-2}} & (Y'_{r-1},C'_{r-1}+\mathbf{N}'_{r-1}) \ar[d]^-{\phi'_{r-1}}\\ 
			(X_0,B_0) \ar[r]^-{f_0} & (X_1,B_1+\mathbf{M}_1) \ar[r] & \hdots \ar[r]^-{f_{r-2}} & (X_{r-1},B_{r-1}+\mathbf{M}_{r-1}) \ar[r]^-{f_{r-1}} & (X_{r},B_{r}+\mathbf{M}_{r}). 
		}
		\]
		satisfying the following conditions:
		\begin{enumerate}
			\item[(a')] each $\psi_i$ is crepant birational and extracts only generalized log canonical places of $(Y'_i,C'_i+\mathbf{N}'_i),$
			\item[(b')] each $Y_i$ is $\qq$-factorial,
			\item[(c')] each $g_i$ is a Mori fiber space, and
			\item[(d')] $g_i^{-1}(\lfloor C_{i+1}\rfloor)\subset \lfloor C_i \rfloor$ for each $0\leq i \leq r-1.$
		\end{enumerate}
		Each $Y_i$ is of Fano type over $V$ by condition (a') and ~\cite[Lemma 2.17]{mor_con24}. Setting $\phi_{r-1}=\phi'_{r-1}\circ \psi^{-1}\circ \psi'$ and $\phi_i=\phi'_i\circ \psi_i$ for $0\leq i \leq r-2,$ we complete the proof of the lemma.
	\end{proof}
	
	\subsection{Compound Du Val Singularities}
	In this short section, we recall the definitions of certain classes of singularities which will be important for us.
	\begin{definition}\label{def:du-val}
		{\em
			Let $(X,x_0)$ be a germ of a normal surface. We say that $x_0$ is a \textit{Du Val singularity} if $X$ is singular and canonical at $x_0$.
		}
	\end{definition}
	Du Val singularities can be separated into subclasses of \textit{A-type}, \textit{D-type} and \textit{E-type} singularities according to the configurations of their dual graphs (see ~\cite[Theorem 4.22]{KM98}).
	\begin{definition}\label{def:cdv-sing}
		{\em
			Let $(X,x_0)$ be a germ of a normal $n$-fold for some $n\geq 2.$ We say that $x_0$ is a \textit{compound Du Val} (or \textit{cDV}) singularity if one of the following holds:
			\begin{enumerate}
				\item $n=2$ and $x_0$ is a Du Val singularity, or
				\item $n\geq 3$, $X$ is singular at $x_0$ and there is an effective Cartier divisor $x_0\in S\subset X$ such that $(S,x_0)$ is a compound Du Val singularity (in particular, we assume that such $S$ is normal at $x_0$).
			\end{enumerate}
			More generally, we say that $x_0$ is \textit{at worst a compound Du Val} singularity if $x_0$ is smooth or a compound Du Val singularity.
		}
	\end{definition}
	
	Since Du Val singularities are hypersurface singularities (see ~\cite[Theorem 4.22]{KM98}), it follows from the definition that a cDV singularity must be a hypersurface singularity. We will be interested in the following subclass of cDV singularities.
	
	\begin{definition}[c.f. {~\cite[1.42]{Kol13}}]\label{def:cA-sing}
		{\em
			Let $(X,x_0)$ be the germ of a normal $n$-fold hypersurface singularity for some $n\geq 2.$ We say that $x_0$ is a \textit{compound A-type} (or \textit{cA-type}) singularity if $x_0$ is formally isomorphic to $\kk\llbracket x_1,\hdots, x_{n+1}\rrbracket/(f),$ where the quadratic part of $f$ has rank at least two.
		}
	\end{definition}
	
	Up to formal change of coordinates, one can arrange for the defining equation $f$ of a $n$-fold cA-type singularity to have the form $f(x_1,\hdots, x_{n+1})=x_1x_2-g(x_3,\hdots, x_{n+1}).$ We remark that a singularity is a cA-type singularity in this sense if and only if it is a cDV singularity such that a general surface section has a A-type Du Val singularity.
	\subsection{Cox Rings and Mori Dream Spaces}\label{subsec:cox-rings}
	
	In this subsection we recall the definitions and prove some lemmas about Cox rings and Mori dream spaces.  
	
	\begin{notation}\label{not:cox-sheaf}
		{\em
			Let $X$ be a normal projective variety with finitely generated divisor class group ${\rm Cl}(X)$. We will denote by $\mathcal{C}ox(X)$ the \textit{Cox sheaf} of $X$ as in ~\cite[Construction 1.4.2.1]{ADHL15}. It is a quasi-coherent sheaf of ${\rm Cl}(X)$-graded commutative $\mathcal{O}_X$-algebras. We will denote by ${\rm Cox}(X)=\Gamma(X,\mathcal{C}ox(X))$ the \textit{Cox ring} of $X$. It is a ${\rm Cl}(X)$-graded commutative $\kk$-algebra. Given $[D]\in {\rm Cl}(X)$ we will denote by $\mathcal{C}ox(X)_{[D]}$ and ${\rm Cox}(X)_{[D]}$ the corresponding graded pieces. We will denote by $\widehat{X}=\mathcal{S}pec_X\mathcal{C}ox(X)$ the \textit{characteristic space} of $X$ and by $\overline{X}=\Spec {\rm Cox}(X)$ the \textit{total coordinate space} of $X.$ We will denote by $H_X=\Spec \kk[{\rm Cl}(X)]$ and refer to it as the \textit{characteristic quasitorus} of $X.$
		}
	\end{notation}

	\begin{notation}\label{not:cox-actions}
		{\em
			Let $X$ be a normal projective variety with finitely generated divisor class group ${\rm Cl}(X)$, and assume that $\mathcal{C}ox(X)$ is finitely generated over $\kk.$ We note that $\widehat{X}$ and $\overline{X}$ admit $H_X$-actions corresponding to the ${\rm Cl}(X)$-gradings of $\mathcal{C}ox(X)$ and ${\rm Cox}(X).$ We have an $H_X$-invariant projection $p_X\colon \widehat{X}\rightarrow X$ correpsonding to the inclusion $\mathcal{O}_X\hookrightarrow \mathcal{C}ox(X)$ as the degree-$0$ piece, and we have an $H_X$-equivariant open immersion $i_X\colon \widehat{X}\hookrightarrow \overline{X}$ corresponding to the identity morphism on global sections.
		}
	\end{notation}
	
	\begin{notation}\label{not:div-alg}
		{\em
			Let $X$ be a normal projective variety and let $K\subset {\rm WDiv}(X)$ be a finitely generated subgroup of (integral) Weil divisors on $X.$ We will denote by 
			$$\mathcal{R}(X,K)=\bigoplus_{D\in K}\mathcal{O}_X(D)$$ the \textit{sheaf of divisorial algebras} associated with $K$ and by
			$$R(X,K)=\Gamma(X,\mathcal{R}(X,K))=\bigoplus_{D\in K}\Gamma(X,\mathcal{O}_X(D))$$
			the \textit{multi-section ring} associated with $K.$ We note that $\mathcal{R}(X,K)$ is a quasi-coherent sheaf of $K$-graded commutative $\mathcal{O}_X$-algebras and that $R(X,K)$ is a $K$-graded commutative $\kk$-algebra. In both cases, multiplication is defined by multiplying homogeneous elements inside the function field $\kk(X).$ 
		}
	\end{notation}
	
	We recall the following definition from ~\cite{HK00}.
	\begin{definition}\label{def:mds}
		Let $X$ be a normal projective variety. We say that $X$ is a \textit{Mori dream space} if the following conditions are satisfied:
		\begin{enumerate}
			\item $X$ is $\qq$-factorial and ${\rm Pic}(X)_\qq=N^1(X)_\qq,$
			\item ${\rm Nef}(X)$ is generated as a cone by finitely many semi-ample line bundles, and
			\item there is a finite collection of small $\qq$-factorial modifications $f_i\colon X_i\dashrightarrow X,$ $1\leq i \leq n$ such that ${\rm Mov}(X)=\bigcup_{i=1}^nf_i^*{\rm Nef}(X_i)$ and such that each $X_i$ satisfies conditions (1) and (2).
		\end{enumerate}
	\end{definition}
	
	The following well-known lemma establishes the fundamental connection between Cox rings and Mori dream spaces.
	
	\begin{lemma}\label{lem:mds-ft-cox}
		Let $X$ be a normal, $\qq$-factorial projective variety with finitely generated divisor class group ${\rm Cl}(X)$ such that ${\rm Cl}(X)_\qq={\rm Pic}(X)_\qq=N^1(X).$ Let $K\subset {\rm WDiv}(X)$ be a subgroup of Weil divisors such that $K_\qq\xrightarrow{\cong}{\rm Cl}(X)_\qq$ is an isomorphism. Then the following hold:
		\begin{enumerate}
			\item ${\rm Cox}(X)$ is a finitely generated $\kk$-algebra if and only if $R(X,K)$ is a finitely generated $\kk$-algebra,
			\item $X$ is a Mori dream space if and only if ${\rm Cox}(X)$ is a finitely generated $\kk$-algebra.
		\end{enumerate}
	\end{lemma}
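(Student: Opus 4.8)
The plan is to deduce both parts from the standard theory of Cox rings and Mori dream spaces, so that the proof is essentially an assembly of citations; the only genuine subtlety is the possible torsion in ${\rm Cl}(X)$, and the hypotheses on $X$ are exactly those under which that theory applies.

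For part (2) I would invoke the Hu--Keel characterization of Mori dream spaces. When ${\rm Cl}(X)$ is torsion-free this is \cite[Proposition~2.9]{HK00}: for $X$ projective and $\qq$-factorial with ${\rm Pic}(X)_\qq=N^1(X)$, the variety $X$ is a Mori dream space if and only if $R(X,K)$ is a finitely generated $\kk$-algebra for some subgroup $K$ with $K_\qq\xrightarrow{\cong}{\rm Cl}(X)_\qq$, and in the torsion-free case $R(X,K)$ is ${\rm Cox}(X)$. The general case, with torsion, is contained in the characteristic space formalism of \cite[Ch.~1.6]{ADHL15}: one uses that $\widehat X\hookrightarrow\overline X$ is an open immersion with complement of codimension at least two, that $X$ is recovered as a GIT quotient of $\overline X$ by the characteristic quasitorus $H_X$, and that varying the linearization produces exactly the small $\qq$-factorial modifications appearing in Definition~\ref{def:mds}. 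The hypothesis ${\rm Cl}(X)_\qq={\rm Pic}(X)_\qq=N^1(X)$ is precisely what makes this picture available.

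For part (1), observe first that since ${\rm WDiv}(X)$ is torsion-free and $K_\qq\xrightarrow{\cong}{\rm Cl}(X)_\qq$, the natural homomorphism $K\to{\rm Cl}(X)$ is injective with finite cokernel $G:={\rm Cl}(X)/K$. By \cite[Construction~1.4.2.1]{ADHL15} and the constructions recalled in \S\ref{subsec:cox-rings}, in degrees lying in the torsion-free group $K$ the multiplication on ${\rm Cox}(X)$ is the ordinary multiplication inside $\kk(X)$, so that $\bigoplus_{[D]\in K}{\rm Cox}(X)_{[D]}$ is canonically identified with the subring $R(X,K)\subseteq{\rm Cox}(X)$. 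As we are in characteristic zero, $\mu:=\Spec\kk[G]$ is a finite subgroup of $H_X$, it acts on ${\rm Cox}(X)$ through the quotient ${\rm Cl}(X)\to G$, and by construction ${\rm Cox}(X)^{\mu}=R(X,K)$; moreover $s^{|G|}\in R(X,K)$ for every homogeneous $s$, so ${\rm Cox}(X)$ is integral over $R(X,K)$ and in fact module-finite over it (again by \cite[\S1.4--1.6]{ADHL15}). Granting this, if ${\rm Cox}(X)$ is a finitely generated $\kk$-algebra then $R(X,K)={\rm Cox}(X)^{\mu}$ is finitely generated by the classical finiteness theorem for invariants of finite groups in characteristic zero; and if $R(X,K)$ is a finitely generated $\kk$-algebra then, since ${\rm Cox}(X)$ is module-finite over it, ${\rm Cox}(X)$ is itself a finitely generated $\kk$-algebra.

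The point requiring the most care is the bookkeeping around the torsion of ${\rm Cl}(X)$. When ${\rm Cl}(X)$ is torsion-free one may simply take $K$ mapping isomorphically onto ${\rm Cl}(X)$, so that $R(X,K)={\rm Cox}(X)$ and part (1) is immediate; the content is entirely in the case where $G$ is nontrivial, so that $R(X,K)$ is a proper subring of ${\rm Cox}(X)$, and here I would lean on \cite{ADHL15} for the precise comparison of the two — namely the identification $R(X,K)={\rm Cox}(X)^{\mu}$ and the module-finiteness of ${\rm Cox}(X)$ over $R(X,K)$. Beyond that, I expect nothing harder than combining these references with the two classical finiteness statements used above.
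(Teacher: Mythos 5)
Your route coincides with the paper's for part (2) but not for part (1): the paper disposes of (1) purely by citation --- it is quoted from \cite[Remark 2.19]{GOST15} together with the proof of \cite[Lemma 2.21]{GOST15} --- and then deduces (2) from \cite[Proposition 2.9]{HK00} exactly as you do (your extra appeal to the GIT/characteristic-space picture of \cite{ADHL15} is not needed once (1) is available, since $R(X,K)$ is graded by the free group $K$ and is a Cox ring in the sense of Hu--Keel). For (1) you instead give a direct invariant-theoretic argument: the map $K\to{\rm Cl}(X)$ is injective (as $K\subset {\rm WDiv}(X)$ is torsion-free and $K_\qq\cong{\rm Cl}(X)_\qq$) with finite cokernel $G$, the subring $\bigoplus_{[D]\in K}{\rm Cox}(X)_{[D]}$ is identified with $R(X,K)$ and with the invariants ${\rm Cox}(X)^{\mu}$ for $\mu=\Spec\kk[G]\subset H_X$, and finite generation is transferred across this finite quasitorus action. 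That identification is correct, and the implication ``${\rm Cox}(X)$ finitely generated $\Rightarrow R(X,K)$ finitely generated'' is indeed classical invariant theory in characteristic zero; your argument is more self-contained than the paper's, at the cost of re-deriving what \cite{GOST15} records.

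The one step you assert rather than prove is that ${\rm Cox}(X)$ is a finite $R(X,K)$-module. Integrality --- your observation that $s^{|G|}\in R(X,K)$ for every homogeneous $s$ --- does not by itself yield module-finiteness or algebra-finiteness (an integral extension of a Noetherian ring need not be a finite module unless the larger ring is already known to be a finitely generated algebra over the smaller one, which is precisely what is at stake in the direction ``$R(X,K)$ finitely generated $\Rightarrow{\rm Cox}(X)$ finitely generated''), and this finiteness is not something you can simply read off from \cite[\S 1.4--1.6]{ADHL15}. The fix is short: ${\rm Cox}(X)$ is an integral domain by \cite[Theorem 1.5.1.1]{ADHL15}, and it decomposes as $\bigoplus_{\gamma\in G}M_{\gamma}$, where $M_{\gamma}$ is the sum of the graded pieces with degree in the coset $\gamma$. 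If $M_{\gamma}\neq 0$, choose $0\neq a\in M_{\gamma}$; then $a^{\operatorname{ord}(\gamma)-1}$ is a nonzero element of $M_{-\gamma}$, and multiplication by it is an injective $R(X,K)$-linear map $M_{\gamma}\to R(X,K)$. Hence each $M_{\gamma}$ is isomorphic to an $R(X,K)$-submodule of $R(X,K)$ and so is finitely generated over the Noetherian ring $R(X,K)$; since $G$ is finite, ${\rm Cox}(X)$ is a finite $R(X,K)$-module, which completes the missing implication. With that supplement your proof of (1) is complete, and (2) follows from \cite[Proposition 2.9]{HK00} exactly as in the paper.
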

	\begin{proof}
		Item (1) follows from ~\cite[Remark 2.19]{GOST15} and the proof of ~\cite[Lemma 2.21]{GOST15}. In light of (1), item (2) follows from ~\cite[Proposition 2.9]{HK00}. 
	\end{proof}

	We recall the following useful sufficient criterion for a variety to be a Mori dream space.
	\begin{lemma}\label{lem:mds-fano}
		Let $X$ be a $\qq$-factorial Fano type variety. Then $X$ is a Mori dream space with a finitely generated divisor class group.
	\end{lemma}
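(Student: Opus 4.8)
The statement is essentially \cite[Corollary 1.3.2]{BCHM10}, and the plan is to recover it from the results recalled above. Two things must be proved: that $\Cl(X)$ is finitely generated, and that $X$ is a Mori dream space in the sense of Definition \ref{def:mds}.

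First I would dispose of the class group. A Fano type variety is klt, hence has rational singularities, and is rationally connected; in particular $H^1(X,\oo_X)=0$, so $\Pic^0(X)$ vanishes and $\Pic(X)$ is a finitely generated abelian group. Since $X$ is $\qq$-factorial, $\Cl(X)_\qq=\Pic(X)_\qq=N^1(X)$, so $\Cl(X)$ has finite rank, and its torsion subgroup is finite — this can be extracted, for instance, from the finiteness of the étale fundamental group of a klt Fano type variety. Hence $\Cl(X)$ is finitely generated.

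For the Mori dream space property I would apply Lemma \ref{lem:mds-ft-cox}: fixing a finitely generated subgroup $K\subset\WDiv(X)$ with $K_\qq\xrightarrow{\cong}\Cl(X)_\qq$, it suffices to show that the multi-section ring $R(X,K)=\bigoplus_{D\in K}\Gamma(X,\oo_X(D))$ is a finitely generated $\kk$-algebra. Pick a boundary $\Delta_0\ge 0$ with $(X,\Delta_0)$ klt and $-(K_X+\Delta_0)$ ample (depending on which definition of Fano type one starts from, this holds on the nose or follows from Kodaira's lemma by absorbing the effective part of $-(K_X+\Delta_0)$ into $\Delta_0$). Only classes in the pseudo-effective cone $\overline{{\rm Eff}}(X)\subset N^1(X)$ contribute to $R(X,K)$, and for Fano type varieties this cone is rational polyhedral and subdivides into finitely many Mori chambers. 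On a fixed chamber a movable class can be written as $K_X+\Delta$ by absorbing a suitable positive multiple of the ample class $-(K_X+\Delta_0)$, so the corresponding section ring becomes — up to rescaling of the grading — an adjoint ring $\bigoplus_{m\ge 0}\Gamma(X,\oo_X(m(K_X+\Delta)))$ with $(X,\Delta)$ klt and $\Delta$ big. Each such ring is finitely generated by \cite[Corollary 1.1.9]{BCHM10}, and a Gordan-type, finitely-many-cones argument assembles these pieces into finite generation of the full $\Cl(X)$-graded ring $R(X,K)$.

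The step demanding the most care is this last assembly: one must choose the boundary divisors $\Delta$ compatibly across adjacent Mori chambers, control the finitely generated semigroups of exponents that appear, and separately (but trivially) handle the classes outside $\overline{{\rm Eff}}(X)$ and on its boundary. All of this bookkeeping is already carried out in \cite{BCHM10}, so in practice I would either reproduce that argument or simply cite \cite[Corollary 1.3.2]{BCHM10} for the Mori dream space property and combine it with the finite generation of $\Cl(X)$ from the second paragraph. As an alternative route avoiding Cox rings, the Cone Theorem for klt pairs shows that ${\rm Nef}(X)$ is rational polyhedral and generated by semiample classes, and running the $(K_X+\Delta)$-MMP with scaling from finitely many chosen ample divisors produces the finitely many small $\qq$-factorial modifications whose nef cones tile ${\rm Mov}(X)$, verifying conditions (1)--(3) of Definition \ref{def:mds} directly.
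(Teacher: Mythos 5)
Your proposal is correct and takes essentially the same route as the paper, whose proof simply combines Lemma \ref{lem:mds-ft-cox} with \cite[Corollary 1.3.2]{BCHM10} and cites \cite[Theorem 3.1]{Tot12} for the finite generation of $\Cl(X)$. The only point to sharpen in your sketch is that finiteness of the torsion of $\Cl(X)$ is controlled by quasi-\'etale covers, so one needs finiteness of the \'etale fundamental group of the smooth locus of $X$ (not merely of $X$ itself); this is exactly what the standard references provide for Fano type varieties.
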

	\begin{proof}
		This follows from Lemma ~\ref{lem:mds-ft-cox}, ~\cite[Corollary 1.3.2]{BCHM10} and ~\cite[Theorem 3.1]{Tot12}.
	\end{proof}
	
	For use in the proof of Theorem ~\ref{introthm:abs-comp-is-min}, we introduce the following ad hoc definition.
	
	\begin{definition}\label{def:m-mov}
		{\em
			Let $X$ be a Mori dream space and let $D$ be a movable Weil divisor on $X.$ Given an integer $m>0$ we will say that $D$ is \textit{$m$-very movable} if there is a small $\qq$-factorial modification $X'$ of $X$ on which the strict transform $D'$ of $D$ satisfies $D'\sim  \sum_{i=1}^mB_i$ for basepoint-free Cartier divisors $B_1,\hdots, B_m$ on $X'$, none of which are linearly equivalent to zero.
		}
	\end{definition}
	
	\begin{lemma}\label{lem:fin-m-mov}
		Let $X$ be a Fano type variety and let $m>0$ be an integer. Then there are only finitely many numerical classes of movable Cartier divisors on $X$ that are not $m$-very movable. 
	\end{lemma}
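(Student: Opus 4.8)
The plan is to leverage that Fano type varieties are Mori dream spaces, so that $\operatorname{Mov}(X)$ has a finite decomposition into nef cones of small $\qq$-factorial modifications, and then to reduce the assertion to a combinatorial finiteness statement inside a rational polyhedral cone.

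After replacing $X$ by a small $\qq$-factorialization if necessary, Lemma~\ref{lem:mds-fano} shows that $X$ is a Mori dream space with finitely generated class group, so $\operatorname{Mov}(X)$ is rational polyhedral and decomposes as $\operatorname{Mov}(X)=\bigcup_{i=1}^{N}\mathcal C_i$, where $\mathcal C_i$ corresponds to $\operatorname{Nef}(X_i)$ under the identification of $N^1(X)$ with $N^1(X_i)$ attached to a small $\qq$-factorial modification $X_i$ of $X$. Each $X_i$ is again a Mori dream space, so $\operatorname{Nef}(X_i)$ is rational polyhedral and every nef divisor on it is semiample. Given a movable Cartier divisor $D$ on $X$, its numerical class lies in some $\mathcal C_i$, and hence the strict transform $D_i$ on $X_i$ is nef, hence semiample. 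Thus it suffices to prove: for each $i$, all but finitely many nef numerical classes of Cartier divisors on $X_i$ can be written as a sum of at least $m$ nonzero basepoint-free Cartier divisors on $X_i$. Indeed, grouping such a sum into $m$ nonempty blocks shows $D_i\sim B_1+\cdots+B_m$ with the $B_j$ basepoint-free and nonzero, so $D$ is $m$-very movable, taking $X'=X_i$ in Definition~\ref{def:m-mov}.

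To prove this for a fixed $i$, write $\sigma=\operatorname{Nef}(X_i)$, a full-dimensional rational polyhedral cone in the lattice $\Lambda$ of numerical Cartier classes of $X_i$. Two facts are available: every lattice point of $\sigma$ is a semiample Cartier class, hence a bounded multiple of it is basepoint-free; and $\Lambda$ is generated as a group by very ample, hence basepoint-free, classes. I would choose a unimodular subdivision of $\sigma$ with respect to $\Lambda$; its rays lie in $\operatorname{Nef}(X_i)$, so are semiample, and from suitably large multiples of the ray generators---together with enough very ample classes to generate $\Lambda$, and analogous lattice points along the lower-dimensional faces---one assembles a finite list $B_1,\dots,B_s$ of basepoint-free Cartier divisors. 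A class deep in the interior of $\sigma$ is then expressed, using unimodularity of the subdivision, as a nonnegative integral combination of the $B_j$ with many summands. A class near the boundary $\partial\sigma$, i.e., in the relative interior of a proper face $\tau$ of $\sigma$, is handled using the semiample contraction $\phi_\tau\colon X_i\to Z_\tau$ attached to $\tau$, where $Z_\tau$ is a Fano type variety of strictly smaller dimension: one descends the problem to $Z_\tau$ and concludes by induction on dimension. Only finitely many classes of $\sigma\cap\Lambda$ escape both mechanisms.

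Summing over the finitely many chambers $\mathcal C_i$ then shows the total set of exceptional numerical classes is finite, which proves the lemma. The step I expect to be the main obstacle is precisely this combinatorial finiteness: the classes deep in the interior of the nef cone are handled routinely by a unimodular subdivision, but controlling the nef classes near $\partial\operatorname{Nef}(X_i)$---where a divisor is semiample but not ample---requires the inductive descent along the semiample fibrations, and one must take care that basepoint-freeness, the Cartier property, and the relevant lattices of numerical classes all remain compatible with the small modifications $f_i$ and with these fibrations.
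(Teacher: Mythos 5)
Your opening reduction (pass to a small $\qq$-factorialization, invoke Lemma~\ref{lem:mds-fano}, decompose ${\rm Mov}(X)$ into the nef cones of the finitely many small $\qq$-factorial modifications $X_i$, and reduce to a finiteness statement for nef Cartier classes on each $X_i$) is exactly the paper's first step and is fine. The core of your argument, however, breaks down at precisely the point you flag as the main obstacle: nef classes lying on a proper face $\tau$ of ${\rm Nef}(X_i)$. You propose to descend along the associated contraction $\phi_\tau\colon X_i\to Z_\tau$ and induct on dimension, asserting that $Z_\tau$ has strictly smaller dimension. That is false in general: proper faces of the nef cone of a Mori dream space also correspond to \emph{birational} contractions (divisorial and small), for which $\dim Z_\tau=\dim X_i$, so the induction as set up does not terminate. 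One could try to induct on the Picard rank instead, which does drop, but then $Z_\tau$ need not be $\qq$-factorial, you must verify that the Cartier class descends to a Cartier class on $Z_\tau$, that $Z_\tau$ is again of Fano type, and that a decomposition produced on $Z_\tau$ (or on one of \emph{its} small modifications) yields $m$-very movability in the sense of Definition~\ref{def:m-mov} back on $X$; none of this is addressed, so the boundary case is deferred rather than proved. Moreover, even your interior case is incomplete: unimodularity expresses a lattice point of $\sigma$ as a nonnegative integral combination of the ray generators of the subdivision, but these generators are only semiample, and your assertion that a \emph{bounded} multiple of every lattice point is basepoint-free is not justified; to get basepoint-free summands you need multiples at least some $n_0$, and you never explain how the leftover terms with small coefficients are to be absorbed --- which is exactly where the finite exceptional set must come from.

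The paper's proof avoids the face stratification entirely and shows how to repair this. After the same reduction to nef classes, it uses that $S={\rm Nef}(X)\cap{\rm Pic}(X)$ is a finitely generated semigroup (Gordan's lemma, using that ${\rm Nef}(X)$ is rational polyhedral and that ${\rm Pic}(X)$ is torsion-free for $X$ of Fano type by \cite{Zha06}), fixes generators $L_1,\hdots,L_r$, and applies the basepoint-free theorem \cite[Theorem 3.3]{KM98} to obtain a single $n_0$ with $nL_i$ basepoint-free for all $n\geq n_0$ and all $i$. Any nef Cartier class outside the finite set $\phi\left(\{0,\hdots,n_0m\}^r\right)$, where $\phi(a_1,\hdots,a_r)=\sum_i a_iL_i$, admits only representations in which some generator occurs with coefficient greater than $n_0m$, and from this the paper deduces directly that such a class is $m$-very movable. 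Thus finite generation of the nef semigroup replaces both your unimodular subdivision and the problematic inductive descent; if you wish to keep your scheme, the induction must at least be reorganized around Picard rank and the descent issues above resolved, but the semigroup argument is shorter and is what the paper does.
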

	\begin{proof}
		It suffices to show that there are only finitely many numerical classes of nef Cartier divisors on $X$ that are not $m$-very movable. Since $X$ is a Mori dream space, its nef cone ${\rm Nef}(X)$ is a strongly convex rational polyhedral cone of full dimension in $N^1(X)_\rr={\rm Pic}(X)_\rr.$ We note that ${\rm Pic}(X)$ is torsion-free by ~\cite[Corollary 1]{Zha06}. It follows that the semi-group $S={\rm Nef}(X)\cap {\rm Pic}(X)$ is finitely generated, say by $L_1,\hdots, L_r\in S.$ It follows from the Basepoint-free Theorem ~\cite[Theorem 3.3]{KM98} that there exists an integer $n_0>0$ such that $nL_i$ is basepoint-free for each $1\leq i \leq r$ and all $n\geq n_0.$ Let $\phi\colon \nn^r\rightarrow {\rm Pic}(X)$ be the homomorphism $\phi(a_1,\hdots, a_r)=\sum_{i=1}^ra_iL_i.$ Then any nef Cartier divisor that is not in $\phi\left(\{0,\hdots, n_0m\}^r\right)$ is $m$-very movable.
	\end{proof}
	
	\begin{notation}\label{not:vertex}
		{\em
			Let $X$ be a Mori dream space with finitely generated divisor class group ${\rm Cl}(X).$ We will refer to the closed point in $\overline{X}$ corresponding to the maximal ideal $\bigoplus_{[D]\neq 0}{\rm Cox}(X)_{[D]}$ as the \textit{vertex} of $\overline{X}.$
		}
	\end{notation}
	
	The following is well-known to experts. We include a proof for completeness.
	\begin{lemma}\label{lem:spreading-out}
		Let $X$ be a Mori dream space with finitely generated divisor class group ${\rm Cl}(X).$ If the vertex of $\overline{X}$ has embedding dimension $d$, then $\overline{X}$ admits an $H_X$-equivariant closed embedding into $\mathbb{A}^d.$
	\end{lemma}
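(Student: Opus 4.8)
The plan is to realize $\overline{X}=\Spec {\rm Cox}(X)$ as a closed $H_X$-invariant subvariety of $\mathbb{A}^d$ by exhibiting a surjection of ${\rm Cl}(X)$-graded $\kk$-algebras $\kk[x_1,\dots,x_d]\twoheadrightarrow {\rm Cox}(X)$ in which every $x_i$ is homogeneous. Grading the polynomial ring so that $\deg x_i$ equals the ${\rm Cl}(X)$-degree of the image of $x_i$ makes the corresponding $H_X$-action on $\mathbb{A}^d$ linear, so the induced closed embedding $\overline{X}\hookrightarrow\mathbb{A}^d$ is automatically $H_X$-equivariant.

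First I would set up the graded-local algebra at the vertex. Since $X$ is a Mori dream space, ${\rm Cox}(X)$ is a finitely generated, hence Noetherian, $\kk$-algebra by Lemma~\ref{lem:mds-ft-cox}, and its degree-zero piece is ${\rm Cox}(X)_0=\Gamma(X,\mathcal{O}_X)=\kk$. Thus $\mathfrak{m}=\bigoplus_{[D]\neq 0}{\rm Cox}(X)_{[D]}$, the ideal of the vertex, is a maximal ideal with residue field $\kk$, and $\mathfrak{m}/\mathfrak{m}^2$ is a finite-dimensional ${\rm Cl}(X)$-graded $\kk$-vector space, of dimension $d$. I would then choose ${\rm Cl}(X)$-homogeneous elements $f_1,\dots,f_d\in\mathfrak{m}$ lifting a homogeneous basis of $\mathfrak{m}/\mathfrak{m}^2$; the whole problem reduces to showing that $f_1,\dots,f_d$ generate ${\rm Cox}(X)$ as a $\kk$-algebra, i.e. that the graded $\kk$-algebra map $x_i\mapsto f_i$ is surjective.

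To run a Nakayama-type argument I would first coarsen the grading to $\nn$. Since $X$ is projective, every effective principal divisor on $X$ is zero; consequently the set $S\subset{\rm Cl}(X)$ of classes represented by effective Weil divisors (the support of the grading of ${\rm Cox}(X)$) meets the torsion subgroup of ${\rm Cl}(X)$ only in $0$, and the cone it spans in ${\rm Cl}(X)_\rr=N^1(X)_\rr$ is strongly convex. Hence there is a group homomorphism $\lambda\colon{\rm Cl}(X)\to\zz$ with $\lambda(s)>0$ for every $s\in S\setminus\{0\}$. Refining the ${\rm Cl}(X)$-grading of ${\rm Cox}(X)$ along $\lambda$ makes it an $\nn$-graded $\kk$-algebra whose degree-zero part is $\kk$ and whose irrelevant ideal is exactly $\mathfrak{m}$, and the chosen $f_i$ become homogeneous of positive $\lambda$-degree. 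Now the standard graded Nakayama lemma applies: from $\mathfrak{m}=\mathfrak{m}^2+(f_1,\dots,f_d)$ one deduces, by induction on $\lambda$-degree, first that $\mathfrak{m}=(f_1,\dots,f_d)$ and then that ${\rm Cox}(X)=\kk[f_1,\dots,f_d]$, as desired. (The number $d$ is then also the smallest possible, since any closed embedding into an affine space of dimension $d'$ exhibits $\mathfrak{m}/\mathfrak{m}^2$ as a quotient of a $d'$-dimensional space.)

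The only genuinely delicate point is that ${\rm Cl}(X)$ may have torsion: one must ensure that the grading of ${\rm Cox}(X)$ is ``pointed'', so that $\mathfrak{m}$ is really an ideal and graded Nakayama is available. This is exactly what the homomorphism $\lambda$, built from the strong convexity of the effective cone of the projective variety $X$, provides; the rest of the argument is routine.
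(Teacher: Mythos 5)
Your proposal is correct, and it shares the paper's skeleton (lift a homogeneous basis of $\mathfrak m/\mathfrak m^2$ at the vertex, show the induced ${\rm Cl}(X)$-graded map $\kk[x_1,\dots,x_d]\to{\rm Cox}(X)$ is surjective, then grade $\mathbb A^d$ by $\deg x_i=\deg f_i$ to get $H_X$-equivariance), but it establishes surjectivity by a genuinely different mechanism. The paper stays with the ${\rm Cl}(X)$-grading and runs a successive-approximation argument modulo powers of the irrelevant ideal $M$, terminating because each graded piece ${\rm Cox}(X)_{[D]}$ is a finite-dimensional $\kk$-vector space and, by the Krull intersection theorem, meets $M^n$ only for finitely many $n$. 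You instead coarsen the ${\rm Cl}(X)$-grading to a positive $\nn$-grading via a homomorphism $\lambda\colon{\rm Cl}(X)\to\zz$ that is strictly positive on nonzero effective classes, and then invoke graded Nakayama; this is arguably cleaner, avoids Krull intersection and the finite-dimensionality of the graded pieces altogether, and in fact does not even use finite generation of ${\rm Cox}(X)$ (it re-proves it). Both arguments use projectivity of $X$: the paper through $\dim_\kk\Gamma(X,\mathcal O_X(D))<\infty$, you through the positivity statement for effective classes.

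The one step you should tighten is the existence of $\lambda$. Pointedness of the (not necessarily closed) cone generated by $S$ does not by itself yield a functional strictly positive on $S\setminus\{0\}$, and a homomorphism to $\zz$ automatically kills torsion, so you also need (as you do note) that no nonzero torsion class is effective. Both issues are settled at once by taking $\lambda(D)=D\cdot H^{\dim X-1}$ for a very ample divisor $H$: this is a well-defined homomorphism ${\rm Cl}(X)\to\zz$ (principal divisors have degree zero on general complete-intersection curves) and is strictly positive on every nonzero effective class. Alternatively, for a Mori dream space the effective cone is closed, rational polyhedral and pointed, so the interior of its dual contains a rational point, which after clearing denominators gives $\lambda$. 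With either justification your argument is complete.
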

	\begin{proof}
		Write $M=\bigoplus_{[D]\neq 0}{\rm Cox}(X)_{[D]}.$ By assumption, $M/M^2$ is a $d$-dimensional $\kk$-vector space. We may choose ${\rm Cl}(X)$-homogeneous $f_1,\hdots, f_d\in M$ whose images generate $M/M^2.$ Then for each $n\geq 1,$ $M^n/M^{n+1}$ is generated as a $\kk$-vector space by the collection of degree $n$ monomials in $f_1,\hdots, f_d.$ Since the ideals $M^n$ are ${\rm Cl}(X)$-homogeneous for all $n\geq 0,$ the quotients $M^n/M^{n+1}$ inherit a ${\rm Cl}(X)$-grading for all $n\geq 0.$ Given $[D]\in {\rm Cl}(X),$ it follows from the finite-dimensionality of ${\rm Cox}(X)_{[D]}$ over $\kk$ and the Krull Intersection Theorem that there are only finitely many $n\geq 0$ such that ${\rm Cox}(X)_{[D]}\cap M^n\neq 0.$ \par
		We claim that the $\kk$-algebra homomorphism $$\phi\colon \kk[x_1,\hdots, x_d]\rightarrow {\rm Cox}(X)$$ defined by $\phi(x_i)=f_i$ is surjective. Let $g\in {\rm Cox}(X)$ be nonconstant and homogeneous, say of degree $[D]\in {\rm Cl}(X).$ Let $n\geq 1$ be such that $g\in M^n\setminus M^{n+1}.$ It follows that there exists a polynomial $p\in \kk[x_1,\hdots, x_d]_n$ such that $p(f_1,\hdots, f_d)\in {\rm Cox}(X)_{[D]}\cap M^n$ and $g\equiv p(f_1,\hdots,f_d)$ modulo $M^{n+1}.$ Thus, $g-p(f_1,\hdots, f_d)\in {\rm Cox}(X)_{[D]}\cap M^{n+1}.$ Continuing in this manner, we obtain after finitely many steps a polynomial $q\in \kk[x_1,\hdots,x_d]$ satisfying $g=q(f_1,\hdots, f_d),$ i.e. such that $g=\phi(q).$ The surjectivity of $\phi$ follows.\par
		The surjection $\phi$ defines a closed embedding $\overline{X}\hookrightarrow \mathbb{A}^d.$ We can equip $\kk[x_1,\hdots, x_d]$ with a ${\rm Cl}(X)$-grading, and hence equip $\mathbb{A}^d$ with an $H_X$-action, by assigning ${\rm deg}(x_i)={\rm deg}(f_i).$ The homomoprhism $\phi$ is ${\rm Cl}(X)$-homogeneous, hence the embedding $\overline{X}\hookrightarrow\mathbb{A}^d$ is $H_X$-equivariant.
	\end{proof}
	
	\begin{lemma}\label{lem:cA-cox}
		Let $X$ be a Mori dream space with finitely generated divisor class group ${\rm Cl}(X),$ and write $d=\dim\overline{X}.$ If the vertex of $\overline{X}$ is a $cA$-type singularity, then $${\rm Cox}(X)=\kk[x_1,\hdots,x_{d+1}]/f,$$
		where $x_1,\hdots, x_{d+1}$ are ${\rm Cl}(X)$-homogeneous and $f$ is a ${\rm Cl}(X)$-homogeneous polynomial of one of the following forms:
		\begin{enumerate}
			\item $f(x_1,\hdots, x_{d+1})=x_1x_2-g(x_3,\hdots, x_{d+1}),$
			\item $f(x_1,\hdots, x_{d+1})=\sum_{i=1}^rx_i^2-g(x_1,\hdots,x_{d+1}),$ where $2\leq r \leq d+1,$ ${\rm deg}(x_i)-{\rm deg}(x_j)$ is a nonzero $2$-torsion element of ${\rm Cl}(X)$ for each $1\leq i\neq j\leq r$, and all monomials appearing in $g$ are of total degree at least $3.$
		\end{enumerate}
	\end{lemma}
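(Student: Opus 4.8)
The plan is to first exhibit $\mathrm{Cox}(X)$ as an explicit hypersurface ring, and then normalize the defining equation using only $\mathrm{Cl}(X)$-homogeneous coordinate changes. Since a cA-type singularity is a singular hypersurface singularity (Definition~\ref{def:cA-sing}), the completed local ring of $\overline X$ at the vertex is a quotient of a power series ring in $d+1$ variables by an element of the square of the maximal ideal, so the embedding dimension at the vertex is $d+1$. I would then invoke Lemma~\ref{lem:spreading-out} to get an $H_X$-equivariant closed embedding $\overline X\hookrightarrow\mathbb A^{d+1}$, i.e.\ $\mathrm{Cox}(X)=\kk[x_1,\dots,x_{d+1}]/I$ with the $x_i$ being $\mathrm{Cl}(X)$-homogeneous and $I$ a $\mathrm{Cl}(X)$-homogeneous ideal. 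Because $X$ is projective, $\mathrm{Cox}(X)_0=\Gamma(X,\mathcal O_X)=\kk$; this forces every homogeneous minimal prime of $\mathrm{Cox}(X)$ into the irrelevant ideal $\mathfrak n=(x_1,\dots,x_{d+1})$, hence through the vertex, which is a normal point and so lies on a unique irreducible component — thus $\overline X$ is irreducible. As $\mathrm{Cox}(X)$ is normal, it is a normal domain, $I$ is a height-one prime in the UFD $\kk[x_1,\dots,x_{d+1}]$, so $I=(f)$ for a single $\mathrm{Cl}(X)$-homogeneous polynomial $f$, and $f\in\mathfrak n^2$ since the embedding dimension at the vertex is $d+1$.

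The key technical input, extracted from $\mathrm{Cox}(X)_0=\kk$, is a rigidity statement. Since the vertex lies on $\overline X$, no nonconstant monomial of $\kk[x_1,\dots,x_{d+1}]$ has $\mathrm{Cl}(X)$-degree zero; hence each $\deg x_i$ is non-torsion and $\deg x_1,\dots,\deg x_{d+1}$ admit no nontrivial relation $\sum b_i\deg x_i=0$ with $b_i\ge 0$ (they lie in the strongly convex effective cone). Consequently, for any $\mathrm{Cl}(X)$-homogeneous generating system and any fixed $i\ne j$ such that $x_ix_j$ occurs in $f$, the polynomial $f$ contains no monomial divisible by $x_i^ax_j^b$ other than a scalar multiple of $x_ix_j$: any such monomial has degree $\deg x_i+\deg x_j=\deg f$, which together with the above forces $a=b=1$ and a constant complementary factor. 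Separately, comparing the completed local ring $\kk\llbracket x_1,\dots,x_{d+1}\rrbracket/(f)$ with the cA normal form — using that a hypersurface ring determines its equation up to a unit and a formal coordinate change — shows that the quadratic part $q$ of $f$ has rank at least two.

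Next I would run a dichotomy on $q$. As a $\mathrm{Cl}(X)$-homogeneous quadratic form, $q$ splits into blocks indexed by the degrees of the variables: quadratic forms on a single degree class, and bilinear pairings between two distinct classes whose degrees sum to $\deg f$. If some cross-class block is nonzero or some single-class block has rank $\ge 2$, then after a $\mathrm{Cl}(X)$-homogeneous linear change mixing only variables of equal degree (together with $u^2+v^2=(u+\sqrt{-1}\,v)(u-\sqrt{-1}\,v)$ where needed) the monomial $x_1x_2$ with $x_1\ne x_2$ occurs in $q$; this is the situation leading to form (1). Otherwise all cross-class blocks vanish and every single-class block has rank $\le 1$, so after diagonalizing within classes and rescaling one gets $q=\sum_{i=1}^rx_i^2$ with $r=\rank q\in\{2,\dots,d+1\}$ and $x_1,\dots,x_r$ in pairwise distinct degree classes $[E_i]$ with $2[E_i]=\deg f$; then $\deg x_i-\deg x_j$ is a nonzero $2$-torsion class for $i\ne j$, and setting $g=q-f$ we have $f=\sum_{i=1}^rx_i^2-g$, all of whose monomials have total degree $\ge 3$ because the coordinate changes preserve $f\in\mathfrak n^2$. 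This is form (2).

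It remains to handle form (1): assume $x_1x_2$ occurs in $q$. By the rigidity statement, $f=x_1x_2+x_1P(x_1,\underline x)+x_2Q(x_2,\underline x)+C(\underline x)$, where $\underline x=(x_3,\dots,x_{d+1})$, with $P,Q,C$ automatically $\mathrm{Cl}(X)$-homogeneous, $P\in\mathfrak n\cap\kk[x_1,\underline x]$, $Q\in\mathfrak n\cap\kk[x_2,\underline x]$, $C\in\kk[\underline x]$ (after rescaling $x_1$ so the $x_1x_2$-coefficient is $1$). Re-expressing $f$ in the $\mathrm{Cl}(X)$-homogeneous generating system $(x_1,\,x_2+P,\,\underline x)$ and applying the rigidity statement once more forces $P\in\kk[\underline x]$ or $Q\in\kk[\underline x]$ (otherwise a non-cancelling monomial divisible by $x_1(x_2+P)$ with nonconstant complementary factor appears, contradicting rigidity); the two cases are symmetric, so say $P\in\kk[\underline x]$. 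Then $f=x_1(x_2+P)+x_2Q+C$, and the $\mathrm{Cl}(X)$-homogeneous triangular substitution $x_2\mapsto x_2+P$ gives $f=x_1z_2+D(z_2,\underline x)$ with $D\in\kk[z_2,\underline x]$; writing $D=D_0(\underline x)+z_2D_1(z_2,\underline x)$ and applying the triangular substitution $x_1\mapsto x_1+D_1$ brings $f$ to $x_1z_2-g(\underline x)$, which is form (1). The main obstacle throughout is exactly the homogeneity constraint: the classical splitting-lemma reduction of a cA singularity uses arbitrary formal coordinate changes, whereas here each step must be realized by a $\mathrm{Cl}(X)$-homogeneous polynomial automorphism producing genuine new homogeneous generators of $\mathrm{Cox}(X)$; the rigidity statement of the second paragraph — that $f$ carries essentially no mixed monomial $x_i^ax_j^b$ — is precisely what makes both the block-diagonalization of $q$ and the clearing of $x_1,x_2$ out of everything except $x_1x_2$ possible, and also what separates the two cases of the statement.
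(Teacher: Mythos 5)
Most of your argument runs parallel to the paper's: you present ${\rm Cox}(X)$ via Lemma~\ref{lem:spreading-out} as a ${\rm Cl}(X)$-homogeneous hypersurface $\kk[x_1,\dots,x_{d+1}]/(f)$ with $f\in\mathfrak{n}^2$ and quadratic part of rank at least two, and you reach the two normal forms by graded coordinate changes; your ``rigidity'' fact (no nonconstant monomial of ${\rm Cl}(X)$-degree zero, hence no nontrivial nonnegative relation among the $\deg x_i$) is a correct substitute for the paper's device of coarsening the grading by a functional nonnegative on the effective cone, and the branch leading to form (2) as well as the two triangular substitutions concluding form (1) are fine.

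The gap is the dichotomy ``$P\in\kk[x_3,\dots,x_{d+1}]$ or $Q\in\kk[x_3,\dots,x_{d+1}]$''. Your own degree argument pins down the only way it can fail: if $P$ contains $x_1^a m$ and $Q$ contains $x_2^b m'$, then $(a-1)\deg x_1+(b-1)\deg x_2+\deg(mm')=0$ forces $a=b=1$, $m,m'$ constant and $\deg x_1=\deg x_2$; that is, the obstruction is exactly $f\supseteq x_1x_2+\alpha x_1^2+\beta x_2^2$ with $\alpha\beta\neq 0$ and $\deg x_1=\deg x_2$. Your preparation of $q$ in the previous step does not exclude this, since you only arranged that some cross monomial occurs, possibly between variables of equal degree (e.g.\ after the $u^2+v^2$ trick, or when a within-class block is $x_1^2+x_1x_2+x_2^2$). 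In that configuration your justification breaks down: setting $y_2=x_2+P=x_2+\alpha x_1+P'(x_3,\dots,x_{d+1})$ and substituting, one finds $f=(1-2\alpha\beta)\,x_1y_2+\alpha^2\beta\, x_1^2+\beta\, y_2^2+x_1\cdot(\text{element of }\kk[x_3,\dots,x_{d+1}])+y_2\cdot(\text{element of }\kk[x_3,\dots,x_{d+1}])+(\text{element of }\kk[x_3,\dots,x_{d+1}])$, so every monomial divisible by both $x_1$ and $y_2$ is a scalar multiple of $x_1y_2$ itself and no contradiction with rigidity arises (indeed the $x_1y_2$-coefficient can even vanish, when $2\alpha\beta=1$, which is compatible with rank two). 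The repair is to normalize the quadratic part before the triangular reductions: by a ${\rm Cl}(X)$-homogeneous linear change one can bring $q$ to $\sum_i x_{2i-1}x_{2i}+\sum_j x_{2r+j}^2$ — this is what the paper imports from \cite[Proposition 2.1]{FH20} — so that the chosen cross term is not accompanied by $x_1^2$ or $x_2^2$; in your setup it suffices to absorb the binary form $x_1x_2+\alpha x_1^2+\beta x_2^2$ into a product of two independent linear forms by a linear change in the $(x_1,x_2)$-plane when it has rank two, treating the rank-one case separately. Alternatively, adopt the paper's second device: coarsen the ${\rm Cl}(X)$-grading to a $\zz$-grading positive on effective classes, order $\deg_{\zz}x_2\le\deg_{\zz}x_1$, and conclude directly that the cofactor of $x_1$ in $f$ involves neither $x_1$ nor $x_2$, after which a single pair of triangular substitutions finishes as in your last paragraph.
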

	\begin{proof}
		Since cA-type singularities are hypersurface singularities, it follows from Lemma ~\ref{lem:spreading-out} that $${\rm Cox}(X)=\kk[x_1,\hdots,x_{d+1}]/f$$ with $x_1,\hdots, x_{d+1}$ and $f$ ${\rm Cl}(X)$-homogeneous. It follows from the definition of cA-type singularities that $f$ has no linear term and that its quadratic term has rank at least $2.$ Applying ~\cite[Proposition 2.1]{FH20} to the quadratic part of $f$, we may assume that $f$ is of the form $$f(x_1,\hdots. x_{d+1})=\sum_{i=1}^rx_{2i-1}x_{2i}+\sum_{j=1}^sx_{2r+j}^2+h(x_1,\hdots, x_{d+1}),$$ where $r,s\geq 0$ satisfy $2r+s\geq 2$ and all monomials appearing in $h$ are of total degree at least $3.$ For each $2r+1\leq i \neq j \leq 2r+s$, we have that ${\rm deg}(x_i^2)={\rm deg}(x_j^2)$ and hence that ${\rm deg}(x_i)-{\rm deg}(x_j)$ is a $2$-torsion element of ${\rm Cl}(X).$ If $r=0$ and ${\rm deg}(x_i)-{\rm deg}(x_j)$ is nonzero for each $2r+1\leq i \neq j \leq 2r+s,$ then we set $g=-h$ and are done.\par
		From now on, we assume that $r\neq 0$ or that ${\rm deg}(x_i)-{\rm deg}(x_j)=0$ for some $2r+1\leq i \neq j \leq 2r+s.$ If ${\rm deg}(x_i)-{\rm deg}(x_j)=0$ for some $2r+1\leq i \neq j \leq 2r+s,$ then we may make the ${\rm Cl}(X)$-homogeneous change of variables 
		\[ 
		y_k=\begin{cases} 
			x_k & k\neq i,j \\
			x_i+\sqrt{-1}x_j & k=i \\
			x_i-\sqrt{-1}x_j & k=j. 
		\end{cases}
		\]
		We obtain $x_i^2+x_j^2=y_iy_j,$ hence we may assume that $r>0.$ Choose a $\zz$-basis $u_1,\hdots, u_\rho$ for ${\rm Cl(X)}/{\rm Cl(X)}_{\rm tors}$ such that ${\rm Psef}(X)\subset {\rm Cone}(u_1,\hdots, u_\rho),$ and define a homomorphism $$\psi\colon {\rm Cl}(X)\rightarrow \zz$$ by $\psi(u_i)=1$ for $1\leq i \leq \rho.$ Denoting by $\phi$ the composite $${\rm Cl}(X)\rightarrow {\rm Cl}(X)/{\rm Cl}(X)_{\rm tors}\xrightarrow{\psi}\zz,$$
		we obtain a nonnegative $\zz$-grading on $\kk[x_1,\hdots, x_{d+1}]$ as a coarsening the ${\rm Cl}(X)$-grading. Reindexing if necessary, we may assume that ${\rm deg}(x_2)\leq {\rm deg}(x_1)$ with respect to this $\zz$-grading.\par
		Collecting all terms of $f$ in which $x_1$ appears, we have $$f(x_1,\hdots,x_{d+1})=x_1(x_2+h_0(x_1,\hdots, x_{d+1}))+h_1(x_2,\hdots,x_{d+1})$$ for some ${\rm Cl}(X)$-homogeneous polynomials $h_0\in \kk[x_1,\hdots,x_{d+1}]$ and $h_1\in \kk[x_2,\hdots, x_{d+1}].$ While $h_0$ a priori depends on $x_1$ and $x_2$, we note that it must actually depend on neither $x_1$ nor $x_2.$ Indeed, it follows from the fact that all monomials in $h$ have total degree at least $3$ that all monomials in $h_0$ have total degree at least $2.$ Since ${\rm deg}(x_2)\leq {\rm deg}(x_i)$ with respect to the coarsened $\zz$-grading for $i=1,2$, $x_2$ would have degree strictly smaller than any term of $h_0$ in which $x_1$ or $x_2$ appears. This would violate ${\rm Cl}(X)$-homogeneity.\par
		We make the ${\rm Cl}(X)$-homogeneous change of variables
		\[ 
		y_k=\begin{cases} 
			x_k & k\neq 2 \\
			x_2+h_0(x_3,\hdots,x_{d+1}) & k=2.
		\end{cases}
		\]
		We obtain an expression $$f(y_1,\hdots, y_{d+1})=y_1y_2+h_1(y_2+h_0(y_3,\hdots,y_{d+1}),y_3,\hdots, y_{d+1}).$$
		Thus, we may assume that, in our original notation, $h$ does not depend on $x_1:$
		$$f(x_1,\hdots, x_{d+1})=x_1x_2+h(x_2,\hdots,x_{d+1}).$$
		Collecting all terms of $f$ in which $x_2$ appears, we have $$f(x_1,\hdots,x_{d+1})=x_2(x_1+h_0(x_2,\hdots,x_{d+1}))+h_1(x_3,\hdots,x_{d+1}).$$
		We make the ${\rm Cl}(X)$-homogeneous change of variables
		\[ 
		y_k=\begin{cases} 
			x_k & k\neq 1 \\
			x_1+h_0(x_2,\hdots,x_{d+1}) & k=1.
		\end{cases}
		\]
		We obtain an expression 
		$$f(y_1,\hdots, y_{d+1})=y_1y_2+h_1(y_3,\hdots, y_{d+1}).$$
		Setting $g=-h_1,$ we are done.
	\end{proof}
	
	\begin{notation}\label{not:[D]-div}
		{\em
			Let $X$ be a Mori dream space with finitely generated divisor class group. Let $f\in {\rm Cox}(X)$ be homogeneous of degree $[D]\in {\rm Cl}(X).$ We will denote by ${\rm div}_{\overline{X}}(f)\in {\rm WDiv}(\overline{X})$ the divisor of zeros of $f$ viewed as a regular function on $\overline{X},$ and we will denote by ${\rm div}_X(f)\in {\rm WDiv}(X)$ the divisor ${\rm div}_{[D]}(f)$ on $X$ constructed in ~\cite[Construction 1.5.2.1]{ADHL15}.
		}
	\end{notation}
	
	We record the following two results comparing singularities of a Mori dream space with that of its total coordinate space.
	
	\begin{lemma}\label{lem:check-sings-cox}
		Let $X$ be a Mori dream space with finitely generated divisor class group. Let $f_1,\hdots, f_r\in {\rm Cox}(X)$ be homogeneous. Given a boundary $\overline{B}=\sum_{i=1}^ra_i{\rm div}_{\overline{X}}(f_i)$ on $\overline{X},$ consider the boundary $B=\sum_{i=1}^ra_i{\rm div}_{X}(f_i)$ on $X$. If $(\overline{X},\overline{B})$ is klt (resp. lc), then $(X,B)$ is klt (resp. lc).
	\end{lemma}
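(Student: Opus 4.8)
The plan is to transport the singularity condition from $\overline{X}$ down to $X$ along the characteristic space, exploiting that the quotient map $p_X\colon\widehat{X}\to X$ is a torsor in codimension one. I would begin by recording three standard facts. Since $X$ is a Mori dream space, ${\rm Cox}(X)$ is finitely generated (Lemma~\ref{lem:mds-ft-cox}) and is a normal $\kk$-domain~\cite{ADHL15}, so $\overline{X}=\Spec{\rm Cox}(X)$ is a normal affine variety; the open immersion $i_X\colon\widehat{X}\hookrightarrow\overline{X}$ has complement of codimension at least two~\cite{ADHL15}; and, by the construction of ${\rm div}_{\overline{X}}(-)$ and ${\rm div}_X(-)$ in~\cite[Construction 1.5.2.1]{ADHL15}, one has (as Weil divisors) $i_X^*\,{\rm div}_{\overline{X}}(f_i)={\rm div}_{\widehat{X}}(f_i)=p_X^*\,{\rm div}_X(f_i)$ for each $i$, where on $\widehat{X}$ one reads $f_i\in{\rm Cox}(X)=\Gamma(\widehat{X},\mathcal{O}_{\widehat{X}})$ as a regular function. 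Setting $\widehat{B}:=\sum_i a_i\,{\rm div}_{\widehat{X}}(f_i)=i_X^*\overline{B}=p_X^*B$, I would first observe that a log resolution of $(\overline{X},\overline{B})$ restricts over $\widehat{X}$ to a log resolution of $(\widehat{X},\widehat{B})$, and that every divisorial valuation over $\widehat{X}$ is one over $\overline{X}$ with the same log discrepancy; hence if $(\overline{X},\overline{B})$ is klt (resp.\ lc) then so is $(\widehat{X},\widehat{B})$. It thus remains to show: if $(\widehat{X},\widehat{B})$ is klt (resp.\ lc), then $(X,B)$ is klt (resp.\ lc).

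For the comparison across $p_X$, I would use that $p_X\colon\widehat{X}\to X$ is a good quotient for the characteristic quasitorus $H_X=\Spec\kk[{\rm Cl}(X)]$ and that it restricts to an $H_X$-torsor over the smooth locus $X_{\rm sm}$, a big open subset of $X$. Over $X_{\rm sm}$ the map $p_X$ is then smooth with trivial relative canonical sheaf, so by normality of $\widehat{X}$ one gets $K_{\widehat{X}}\sim p_X^*K_X$; since $X$ is $\qq$-factorial, $K_X+B$ is $\qq$-Cartier, and combined with $p_X^*B=\widehat{B}$ this yields
$$
K_{\widehat{X}}+\widehat{B}\;\sim_{\qq}\;p_X^*(K_X+B).
$$

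The descent step then goes as follows. If $(X,B)$ were not klt (resp.\ not lc), I would pick a proper birational $g\colon Y\to X$ with $Y$ normal and a prime divisor $E\subset Y$ with ${\rm coeff}_E(B_Y)\ge 1$ (resp.\ $>1$), where $K_Y+B_Y=g^*(K_X+B)$; then form $\widehat{Y}$, the normalization of the dominant component of $Y\times_X\widehat{X}$, with induced maps $q\colon\widehat{Y}\to Y$ and $\widehat{g}\colon\widehat{Y}\to\widehat{X}$, noting that $q$ restricts to an $H_X$-torsor over $g^{-1}(X_{\rm sm})$ and that $\widehat{g}$ is proper birational. If $E$ meets $g^{-1}(X_{\rm sm})$, then for a component $\widehat{E}$ of $q^{-1}(E)$ the identity $\widehat{g}^*(K_{\widehat{X}}+\widehat{B})=q^*g^*(K_X+B)=q^*(K_Y+B_Y)$ together with $K_{\widehat{Y}}\sim q^*K_Y$ near $\widehat{E}$ gives $a_{\widehat{E}}(\widehat{X},\widehat{B})=a_E(X,B)\le 0$ (resp.\ $<0$), contradicting that $(\widehat{X},\widehat{B})$ is klt (resp.\ lc).

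The hard part is precisely the case not covered by the previous paragraph: a divisor $E$ realizing the failure of klt/lc for $(X,B)$ may have centre contained in the singular locus $X\setminus X_{\rm sm}$, so the torsor description of $q$ near $\widehat{E}$ is unavailable and the discrepancy bookkeeping above breaks down. I expect to handle this either by a local analysis of the good quotient $p_X$ near a point of $X\setminus X_{\rm sm}$ — writing $p_X$, \'{e}tale-locally, as a composite of elementary quotients by $\G_m$ and by finite abelian groups, each of which does not decrease log discrepancies upon passing to the quotient — or, more directly, by invoking the corresponding transfer statement for pairs from the literature relating Cox rings to singularities (\cite{GOST15}, \cite{ADHL15}). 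This is the one step I would treat with care.
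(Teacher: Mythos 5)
Your reduction steps are fine and standard: nothing is lost in passing from $\overline{X}$ to $\widehat{X}$ since the complement has codimension at least two, the identities $\widehat{B}=p_X^*B$ and $K_{\widehat{X}}+\widehat{B}\sim_{\mathbb{Q}}p_X^*(K_X+B)$ hold (the torsor locus is really the locally factorial locus of $X$, which contains $X_{\rm sm}$; harmless), and your base-change computation correctly gives $a_{\widehat{E}}(\widehat{X},\widehat{B})=a_E(X,B)$ for any divisor $E$ over $X$ whose center meets the torsor locus. But this does not yet prove the lemma, and the case you flag as "the hard part" is not a technicality — it is the entire content of the statement: the minimal log discrepancy of $(X,B)$ may well be computed by divisors whose center lies inside the non-factorial (e.g.\ singular) locus, where $p_X$ has no bundle structure and your valuation-lifting breaks down. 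Neither of your proposed fixes closes this. The \'etale-local strategy rests on the claim that each elementary quotient by $\mathbb{G}_m$ or by a finite abelian group "does not decrease log discrepancies"; for finite quasi-\'etale quotients log discrepancies are in fact rescaled by ramification indices (only the klt/lc property itself descends), and for a $\mathbb{G}_m$-quotient that is a torsor only over a big open subset of the base, the one-step descent statement is precisely the difficulty you set out to avoid — over the bad locus there is no way to lift a valuation for free, and this is not a quotable standard fact in the form you use it.

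For what it is worth, the paper does not give an independent argument either: its proof of this lemma consists of citing \cite[Remark 2.19]{GOST15}, the proof of \cite[Lemma 2.21]{GOST15}, and Step 2 of the proof of \cite[Theorem 3.2]{KO15}, which are exactly the transfer-of-singularities results between $X$, its multi-section/Cox rings, and their spectra (the lc case is handled in \cite{KO15}, the klt case in \cite{GOST15}, both working through the multi-section ring $R(X,K)$ as in Lemma~\ref{lem:mds-ft-cox} rather than only through $p_X$ over the torsor locus). So your fallback option of "invoking the corresponding transfer statement from the literature" is in effect the paper's whole proof; to turn your proposal into one, you must either quote those precise statements or actually prove the descent of klt/lc along the good quasitorus quotient beyond the torsor locus, which your sketch currently does not do.
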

	\begin{proof}
		This follows from ~\cite[Remark 2.19]{GOST15}, the proof of ~\cite[Lemma 2.21]{GOST15} and Step 2 of the proof of ~\cite[Theorem 3.2]{KO15}.
	\end{proof}
	
	\begin{lemma}\label{lem:induced-cox-pair}
		Let $X$ be a Mori dream space with finitely generated divisor class group. Let $f_1,\hdots, f_r\in {\rm Cox}(X)$ be homogeneous. Given a boundary $B=\sum_{i=1}^ra_i{\rm div}_{X}(f_i)$ on $X,$ consider the boundary $\overline{B}=\sum_{i=1}^ra_i{\rm div}_{\overline{X}}(f_i)$ on $\overline{X}$. If $(X,B)$ is log Calabi--Yau, then $(\overline{X},\overline{B})$ is log canonical.
	\end{lemma}
	\begin{proof}
		This follows from ~\cite[Remark 2.19]{GOST15}, the proof of ~\cite[Lemma 2.21]{GOST15} and the proof of ~\cite[Theorem 3.1]{KO15}. 
	\end{proof}
	
	\section{Complexity One Pairs}
	In this section we prove Theorems \ref{introthm:compl-less-than-2}, \ref{introthm:abs-compl-cA}, \ref{introthm:abs-compl-3/2} and \ref{introthm:abs-comp-is-min}. First, we prove Theorem~\ref{introthm:compl-less-than-2} using ideas from \cite{BMSZ18}.
	
	\begin{proof}[Proof of Theorem~\ref{introthm:compl-less-than-2}]
		To show that $X$ is Fano type, we imitate the proof of ~\cite[Lemma 5.3]{BMSZ18}, explaining how the situation differs when considering the complexity $c(X,B)$ instead of the fine complexity $\overline{c}(X,B).$ By ~\cite[Theorem 5.1]{FG12}, it suffices to show that there exists a birational morphism $Y\rightarrow X$ from some Fano type variety $Y.$ So, by Lemma ~\ref{lem:exist-dlt-mod} and Corollary ~\ref{cor:compl-dlt-mod}, we may assume that $X$ is $\qq$-factorial and $(X,B)$ is dlt. It follows from the proof of ~\cite[Corollary 4.2]{BMSZ18} that there exists a big and semi-ample divisor $A\geq 0$ and an effective divisor $B'$ such that $(X,B'+A)$ is klt and $K_X+B'+A\sim_\rr N\geq 0$ has numerical dimension $0$. By ~\cite[Lemma 5.1]{BMSZ18}, we may assume that $N$ shares no components with $B$ and that $N$ contains no log canonical centers of $(X,B)$.

		If $N=0,$ then $(X,B'+A)$ is a klt log Calabi--Yau pair with $-K_X\sim_{\rr}B'+A$ big. In this case, it follows from ~\cite[Lemma 2.5.3]{BMSZ18} that $X$ is Fano type. If $N\neq 0,$ we use ~\cite[Theorem 1.2]{BCHM10} to obtain a log terminal model $\pi\colon X \dashrightarrow Y$ for $(X,B'+A).$ Write $B_Y=\pi_*B,$ $B'_Y=\pi_*B'$ and $A_Y=\pi_*A.$ Then $(Y,B'_Y+A_Y)$ is a klt log Calabi--Yau pair with $-K_Y\sim_{\rr}B'_Y+A_Y$ big, hence $Y$ is Fano type. The divisor $-(K_Y+B_Y)$ is movable since it is pushed forward from the nef divisor $-(K_X+B).$ Since $Y$ is Fano type, we may replace $Y$ with a small $\qq$-factorial modification on which $-(K_Y+B_Y)$ is nef. The divisors contracted by $\pi$ are precisely the components of $N$. Since $N$ is nonzero and shares no components with $B,$ it follows from Lemma ~\ref{lem:complexity-and-exceptional-divisors} and ~\cite[Corollary 1.3]{BMSZ18} that $N$ has exactly one component $E$ and that $c(Y,B_Y)<1.$ From here, the proof of ~\cite[Lemma 5.3]{BMSZ18} applies verbatim to show that we can find a big and nef divisor $A'\geq 0$ and an effective divisor $B''$ such that $(X,B''+A')$ is a klt log Calabi--Yau pair. As before, it follows from ~\cite[Lemma 2.5.3]{BMSZ18} that $X$ is Fano type.\par
		To see that $\Spec {\rm Cox}(X)$ has at worst a compound Du Val singularity at its vertex, we use the fact that $X$ is Fano type and $-(K_X+B)$ is nef to choose $0\leq D \sim_\rr-(K_X+B)$ such that $(X,B+D)$ is log Calabi--Yau. Note that $c(X,B+D)\leq c(X,B)<2.$ The desired conclusion now follows from Lemma ~\ref{lem:induced-cox-pair} applied to $(X,B+D)$ and the proof of ~\cite[Lemma 2.4.3]{BMSZ18}.\par
		From now on, we assume that $(X,B)$ is log Calabi--Yau. As above, we may assume that $X$ is $\qq$-factorial. Since $c(X,B)<2,$ it follows that $|B|>\dim X$ if $\rho(X)>1.$ Thus, it follows from ~\cite[Lemma 3.3]{BMSZ18} that we must have $\rho(X)=1$ if the components of $B$ all span the same ray of the cone of effective divisors. Having made this observation, we can now apply the proof of ~\cite[Theorem 3.2]{BMSZ18} verbatim to conclude that the components of $B$ generate ${\rm Cl}(X)_\qq.$
	\end{proof}
	
	Next, we use the theory of Cox rings (Subsection \ref{subsec:cox-rings}) to prove Theorem \ref{introthm:abs-compl-cA}.
	
	\begin{proof}[Proof of Theorem~\ref{introthm:abs-compl-cA}]
		
		Condition (1) clearly implies condition (2). To show that condition (2) implies condition (3), let us assume that $\widehat{c}(X) \in [1, 3/2)$. Since $\widehat{c}(X)\geq 1,$ it follows from Theorem~\ref{introthm:BMSZ18}, that $X$ is not toric.
		By the definition of $\hat{c}$, we may choose a log Calabi-Yau pair $(X, B)$ such that $c(X, B) \in (1, 3/2)$. It now follows from Theorem~\ref{introthm:compl-less-than-2} that $X$ is of Fano type. Lemma ~\ref{lem:induced-cox-pair} applied to $(X,B)$ and ~\cite[Lemma 2.4.3]{BMSZ18} show that $\Spec {\rm Cox}(X)$ has either a smooth point or a cA-type singularity at its vertex. Since $X$ is not toric, it follows from ~\cite[Theorem 2.5.4]{BMSZ18} that $\Spec {\rm Cox}(X)$ cannot be smooth at its vertex.\par 
		In the remaining part of this proof, we show that (3) implies (1), and we also prove the last claim of Theorem~\ref{introthm:abs-compl-cA}. From now on, we assume that $X$ is a non-toric variety of Fano type and that $\overline{X}=\Spec {\rm Cox}(X)$ has a cA-type singularity at its vertex. We may assume that $X$ is $\qq$-factorial. Indeed, if $X'\rightarrow X$ is a small $\qq$-factorial modification then we will have $1 \leq \widehat{c}(X)\leq \widehat{c}(X')$ by Lemma ~\ref{lem:abs-comp-bir-contr} and will have ${\rm Cox}(X')={\rm Cox}(X)$. We may express ${\rm Cox}(X)=\kk[x_1,\hdots,x_{d+1}]/f$ as in Lemma ~\ref{lem:cA-cox}. Writing $\overline{T}=\mathbb{A}^{d+1},$ this expression provides us with an $H_X$-equivariant closed embedding $\overline{j}\colon \overline{X}\hookrightarrow\overline{T}.$ For $1\leq i\leq d+1,$ denote by $\overline{D}_i={\rm div}_{\overline{T}}(x_i).$ It follows from inversion of adjunction that the pair $(\overline{T},\overline{X}+\sum_{i=3}^{d+1}\overline{D}_i)$ is log canonical in a neighborhood of the origin. Indeed, the pair obtained from restricting to $\bigcap_{i=3}^{d+1}\overline{D}_i\cong \mathbb{A}^2$ is $(\mathbb{A}^2,\{f(x_1,x_2,0,\hdots,0)=0\})$. It follows from the form of $f$ as described in Lemma ~\ref{lem:cA-cox} that $\{f(x_1,x_2,0,\hdots,0)=0\}$ is a reduced nodal curve with two branches at the origin, showing that the pair $(\mathbb{A}^2,\{f(x_1,x_2,0,\hdots,0)=0\})$ is log canonical at the origin. Since the boundary divisor $\overline{X}+\sum_{i=3}^{d+1}\overline{D}_i$ is $H_X$-invariant and since any neighborhood of the origin meets every $H_X$-orbit in $\overline{T},$ it follows that $(\overline{T},\overline{X}+\sum_{i=3}^{d+1}\overline{D}_i)$ is log canonical. \par
		As in ~\cite[Construction 3.2.5.7]{ADHL15}, there is a commutative diagram 
		\[
		\xymatrix{ 
			\overline{X} \ar@{^{(}->}[r]^-{\overline{j}}&\overline{T}\\ 
			\widehat{X}\ar[u]^-{i_X} \ar[d]_-{p_X} \ar@{^{(}->}[r]^-{\widehat{j}} & \widehat{T}\ar[u]_-{i_T}\ar[d]^-{p_T} \\ 
			X \ar@{^{(}->}[r]^-{j} &
			T
		}
		\]
		where $T$ is a projective toric variety containing $X$ as a hypersurface such that $X={\rm div}_{T}(f).$ For each $1\leq i \leq d+1,$ write $D_i={\rm div}_{T}(x_i).$ It follows from Lemma ~\ref{lem:check-sings-cox} and the work of the previous paragraph that $(T,X+\sum_{i=3}^{d+1}D_i)$ is log canonical. Since $f$ contains a monomial of the form $x_1x_2$ or $x_1^2+x_2^2$ by Lemma ~\ref{lem:cA-cox}, it follows that $2X\sim 2(D_1+D_2)$. But $\sum_{i=1}^{d+1}D_i$ is the toric boundary on $T,$ so $2(K_T+X+\sum_{i=3}^{d+1}D_i)\sim 0.$ Performing adjunction to $X,$ we obtain a log Calabi--Yau pair $(X,B)$ of index at most $2$. We note that $B$ contains at least $d-1$ components with coefficient $1.$ Since $d=\dim X + \rho(X),$ it follows that $c(X,B)\leq 1.$ Since $X$ is not toric, it follows that $c(X,B)=1$ and that $B$ is a reduced boundary.
	\end{proof}
	
	In light of Theorem \ref{introthm:abs-compl-cA}, it is interesting to study the set of possible absolute complexities.
	
	\begin{question}\label{ques:other-vals-in-[0,2)}
		What are the other possible values of $\widehat{c}$ in the interval $[0,2)?$ Is the set of possible absolute complexities discrete?
	\end{question}

	To prove Theorem \ref{introthm:abs-compl-3/2}, we construct examples in every dimension $\geq 3$ with absolute complexity $\frac{3}{2}$.
	
	\begin{proof}[Proof of Theorem~\ref{introthm:abs-compl-3/2}]
		Fix $n\geq 3.$ Define $$R=\kk[x,y,z,w_1,\hdots, w_{n-1}]/\left(x^2-y^2z+4z^3+y^2w_1+\sum_{i=2}^{n-1}w_i^6\right),$$ and write $\overline{X}=\Spec R.$
		We claim that $R$ is a UFD. It suffices to check that $R$ is a normal domain and that ${\rm Cl}(\overline{X})=0.$ That $R$ is a domain follows from the fact that the polynomial $y^2z-4z^3-y^2w_1-\sum_{i=2}^{n-1}w_i^6$ cannot be a square in $\kk[y,z,w_1,\hdots,w_{n-1}],$ and normality follows from Serre's criterion as $\overline{X}$ is easily seen to be nonsingular in codimension one. Write $E=\{y=0\}\subset \overline{X}.$ We note that $E$ is a prime divisor since $R/(y)=\kk[x,z,w_1,\hdots,w_{n-1}]/(x^2+4z^3+\sum_{i=2}^{n-1}w_i^6)$ is an integral domain. We have an exact sequence $$\zz\rightarrow {\rm Cl}(\overline{X})\rightarrow {\rm Cl}(\overline{X}\setminus E)\rightarrow 0,$$
		where the leftmost arrow maps $n\mapsto [nE].$ Since $E$ is principal, it follows that this leftmost map is zero. On the other hand, $\overline{X}\setminus E=\Spec R[y^{-1}].$ But $R[y^{-1}]=\kk[x,y^{\pm},z,w_2,\hdots,w_{n-1}]$ is a UFD, so ${\rm Cl}(\overline{X}\setminus E) = 0.$ It follows that ${\rm Cl}(\overline{X})=0,$ as desired.\par
		We equip $R$ with a nonnegative $\zz$-grading by declaring ${\rm deg}(x)=3$, ${\rm deg}(y)={\rm deg}(z)={\rm deg}(w_1)=2$ and ${\rm deg}(w_{2})=\hdots={\rm deg}(w_{n-1})=1.$ Define $X=\text{Proj } R.$ Thus, $X$ is a normal projective $n$-fold. We note that each of the classes $x,y,z,w_1,\hdots, w_{n-1}$ are prime in $R.$ This is easy to see for each of the variables $y,z,w_1,\hdots, w_{n-1}.$ For the variable $x,$ we note that the class of $y$ is not a zero-divisor in the quotient $R/(x)=\kk[y,z,w_1,\hdots, w_{n-1}]/(-y^2z+4z^3+y^2w_1+\sum_{i=2}^{n-1}w_i^6)$ and that $R/(x)[y^{-1}]=\kk[y^\pm,z,w_2,\hdots,w_{n-1}]$ is a domain. It now follows from ~\cite[Theorem 3.2.1.4]{ADHL15} that ${\rm Cl}(X)=\zz$ and that $R={\rm Cox}(X)$. We note that $X$ is $\qq$-factorial since ${\rm Cl}(X)=\zz.$
		
		Since the quadratic part of $x^2-y^2z+4z^3+y^2w_1+\sum_{i=2}^{n-1}w_i^6$ only has rank one, the vertex of $\overline{X}$ cannot be a cA-type singularity. It follows from Theorem ~\ref{introthm:abs-compl-cA} that $\widehat{c}(X)\geq \frac{3}{2}.$ 
		Define $$B=\frac{1}{2}{\rm div}_X(z)+\sum_{i=1}^{n-1}{\rm div}_X(w_i).$$ We claim that $(X,B)$ is a log Calabi--Yau pair. Since $c(X,B)=\frac{3}{2},$ this would imply that $\widehat{c}(X)=\frac{3}{2}$ and hence that $X$ is Fano type by Theorem ~\ref{introthm:compl-less-than-2}. Since ${\rm Cl}(X)=\zz,$ it follows that $X$ is Fano.
		
		It follows from ~\cite[Proposition 3.3.3.2]{ADHL15} that $2(K_X+B)\sim 0,$ so we must simply show that $(X,B)$ is log canonical. Writing $$\overline{B}=\frac{1}{2}{\rm div}_{\overline{X}}(z)+\sum_{i=1}^{n-1}{\rm div}_{\overline{X}}(w_i),$$
		it suffices by Lemma ~\ref{lem:check-sings-cox} to show that $(\overline{X},\overline{B})$ is log canonical. Since the boundary divisor $\overline{B}$ is $H_X$-invariant, it suffices to show that the pair $(\overline{X},\overline{B})$ is log canonical in a neighborhood of the vertex. By inversion of adjunction, it suffices to show that the pair $$\left(Y=\Spec \kk[x,y,z]/(x^2-y^2z+4z^3),B_Y=\frac{1}{2}\{z=0\}\right)$$ is log canonical. We have a quasi-\'etale morphism $$Z=\Spec \kk[\alpha,\beta,\gamma]/(\alpha\beta-\gamma^4)\rightarrow Y$$ given by $$x\mapsto (\alpha-\beta)\gamma,$$
		$$y\mapsto \alpha+\beta,$$
		$$z\mapsto \gamma^2.$$
		Under this map, the $\qq$-divisor $\frac{1}{2}\{z=0\}$ pulls back to the divisor $\{\gamma=0\}.$ Since the pair $(Z,\{\gamma=0\})$ is log canonical, it follows that $(Y,B_Y)$ is as well.
	\end{proof}
	
	Our construction above only works for dimension $\geq 3.$ It is interesting to know if the same can be achieved in dimension 2.
	
	\begin{question}
		Is there a surface $X$ with $\widehat{c}(X) = \frac{3}{2}$? Does the possible values of $\widehat{c}(X)$ depend on the dimension $n$?
	\end{question}
	
	Finally, we prove that if $X$ is of Fano type, then the absolute complexity is achieved as the complexity of a log Calabi--Yau pair $(X,B)$.

	\begin{proof}[Proof of Theorem ~\ref{introthm:abs-comp-is-min}]
		It follows from Corollary ~\ref{cor:compl-dlt-mod} and Lemma ~\ref{lem:abs-comp-bir-contr} that it suffices to prove the result for a small $\qq$-factorial model of $X.$ Thus, we may assume $X$ is $\qq$-factorial. Since $X$ is a $\qq$-factorial variety of Fano type, we may fix an integer $m>0$ such that $mD$ is Cartier for any prime divisor $D$ on $X.$\par
		Let $\{(X,B_n)\}_{n=1}^\infty$ be a sequence of log Calabi--Yau pairs on $X$ such that $c(X,B_n)$ decreases to $\widehat{c}(X).$ For each $n\geq 1,$ write $$B_n=\sum_{i=1}^{r_n}a_{n,i}D_{n,i}+\sum_{j=1}^{s_n}b_{n,j}E_{n,j}+\sum_{k=1}^{t_n}d_{n,k}F_{n,k}$$
		so that the following conditions are satisfied:
		\begin{enumerate}
			\item[(i)] each $D_{n,i}$ and $E_{n,j}$ is a movable prime divisor,
			\item[(ii)] each $D_{n,i}$ is $(m+1)$-very movable,
			\item[(iii)] no $E_{n,j}$ is $(m+1)$-very movable,
			\item[(iv)] each $F_{n,k}$ is prime divisor which is not movable.
		\end{enumerate}
		We claim that $\lim_{n\rightarrow \infty}\sum_{i=1}^{r_n}a_{n,i}=0.$ To show this, we will assume that there is some $\epsilon>0$ such that $\sum_{i=1}^{r_n}a_{n,i}>\epsilon$ for infinitely many $n\geq 1$ and will obtain a contradiction. Let $\{n_p\}_{p=1}^\infty$ be those positive integers with $\sum_{i=1}^{r_{n_p}}a_{n_p,i}>\epsilon$. Fix an arbitrary $p\geq 1.$ For $1\leq i \leq r_{n_p},$ let $X'$ be a small $\qq$-factorial model of $X$ on which the strict transform $D'_{n_p,i}$ of $D_{n_p,i}$ is nef and $mD'_{n_p,i}\sim \sum_{l=1}^{m+1}M_l$ for basepoint-free Cartier divisors $M_1,\hdots, M_{m+1}.$ Let $D'_l\in |M_l|$ be general for each $1\leq l \leq m+1.$ Then $\sum_{l=1}^{m+1}D_l$ is a general member of $|mD'_{n_p,i}|.$ Denoting by $B'_{n_p}$ the strict transform on $X'$ of $B_{n_p}$, we have that $(X',B'_{n_p})$ is log Calabi--Yau. Since $a_{n_p,i}D'_{n_p,i}=\frac{a_{n_p,i}}{m}(mD'_{n_p,i})$ is a summand of $B'_{n_p},$ it follows that $(X',B'_{n_p}+\frac{a_{n_p,i}}{m}\sum_{l=1}^{m+1}M_l-a_{n_p,i}D'_{n_p,i})$ is a log Calabi--Yau pair on $X'.$ Replace $B_{n_p}$ with the strict transform of $B'_{n_p}+\frac{a_{n_p,i}}{m}\sum_{l=1}^{m+1}M_l-a_{n_p,i}D'_{n_p,i}$ and repeat this process for each $1\leq i \leq r_{n_p}.$ We obtain a new log Calabi--Yau pair $(X,B''_{n_p})$ with $|B''_{n_p}|=|B_{n_p}|+\frac{1}{m}\sum_{i=1}^{r_{n_p}}a_{n_p,i}>|B_{n_p}|+\frac{\epsilon}{m},$ hence with $c(X,B''_{n_p})<c(X,B_{n_p})-\frac{\epsilon}{m}.$ But since $\lim_{p\rightarrow \infty}c(X,B_{n_p})=\widehat{c}(X),$ it follows that $c(X,B''_{n_p})<\widehat{c}(X)$ for all $p>>0.$ This is nonsense.\par
		For each $n\geq 1,$ write $\widetilde{B}_n=\sum_{j=1}^{s_n}b_{n,j}E_{n,j}+\sum_{k=1}^{t_n}d_{n,k}F_{n,k}.$ It follows from the paragraph above that $\lim_{n\rightarrow \infty}c(X,\widetilde{B}_n)=\widehat{c}(X).$ Since $0\leq \widetilde{B}_n\leq B_n,$ it follows that $(X,\widetilde{B}_n)$ is a log canonical pair for each $n\geq 1.$ We also note that $-(K_X+\widetilde{B}_n)\equiv\sum_{i=1}^{r_n}a_{n,i}D_{n,i}$ is movable for each $n\geq 1.$ It follows from Lemma ~\ref{lem:fin-m-mov} that there are finitely many prime divisors $G_1,\hdots, G_q$ on $X$ such that $\{E_{n,1},\hdots, E_{n,s_n},F_{n,1},\hdots, F_{n,t_n}\}\subset \{G_1,\hdots G_q\}$ for each $n\geq 1.$ For each $n\geq 1,$ write $$\widetilde{B}_n=\sum_{l=1}^qe_{n,l}G_l.$$ Since $e_{n,l}\in [0,1]$ for each $n\geq 1$ and $1\leq l \leq q,$ we may choose a subsequence $\{n_p\}_{p=1}^\infty$ such that $e_l:=\lim_{p\rightarrow \infty}e_{n_p,l}$ exists for each $1\leq l \leq q.$ Define $$\widetilde{B}=\sum_{l=1}^qe_lG_l.$$
		Then $(X,\widetilde{B})$ is log canonical, $-(K_X+\widetilde{B})$ is movable and $c(X,\widetilde{B})=\widehat{c}(X).$ Since $X$ is of Fano type, there is an small $\qq$-factorial modification $Y$ on which $-(K_Y+\widetilde{B}_Y)$ is nef. Here, $\widetilde{B}_Y$ is the strict transform on $Y$ of $\widetilde{B}.$ We claim that $(Y,\widetilde{B}_Y)$ is log canonical. To see this, let $B_{Y,n}$ and $\widetilde{B}_{Y,n}$ be the strict transfroms on $Y$ of $B_n$ and $\widetilde{B}_n$ for each $n.$ Each pair $(Y,B_{Y,n})$ is a log Calabi--Yau pair. Since $0\leq \widetilde{B}_{Y,n}\leq B_{Y,n}$ for each $n,$ it follows that each pair $(Y,\widetilde{B}_{Y,n})$ is log canonical. As for $(X,\widetilde{B})$, it follows that $(Y,\widetilde{B}_Y)$ is log canonical. Since $Y$ is of Fano type, it follows that the nef divisor $-(K_Y+\widetilde{B})$ is a linear combination of semi-ample divisors. It follows that there is a $B'_Y\geq \widetilde{B}_Y$ such that $(Y,B'_Y)$ is log Calabi--Yau. Denoting by $B'$ the strict transform on $X$ of $B'_Y,$ we obtain $B'\geq \widetilde{B}$ such that $(X,B')$ is log Calabi--Yau. But then $$\widehat{c}(X)\leq c(X,B')\leq c(X,\widetilde{B'})=\widehat{c}(X),$$
		so it follows that $c(X,B')=\widehat{c}(X).$\par
		Let $V=\bigoplus_{i=1}\rr B_i$, where $B_1,\hdots, B_r$ are the irreducible components of the support of $B'.$ It follows from ~\cite[1.3.2]{Sho93b} that the subset $\mathcal{L}=\{\Delta \in V\mid (X,\Delta)\text{ is log canonical}\}$ is a rational polytope. The set $\mathcal{C}=\{\Delta \in V\mid (X,\Delta)\text{ is log Calabi--Yau}\}$ is the intersection of $\mathcal{L}$ with the rational affine subspace that is the the preimage of $-K_X$ under the map $V\rightarrow N^1(X)$ sending a divisor to its numerical class. Thus, $\mathcal{C}$ is also a rational polytope. It follows that the affine function $$\widehat{c}\colon \mathcal{C}\rightarrow \rr$$ must attain its minimum on some vertex of $\mathcal{C}.$ We replace $B'$ by the divisor $B$ corresponding to one such vertex. We see that $B$ is a $\qq$-divisor, and hence that $\widehat{c}(X)=c(X,B)\in \qq.$
	\end{proof}
	
	\section{Complexity One and Cluster Type}
	
	In this section we prove Theorems \ref{introthm:fine-compl-one-cluster-type} and \ref{introthm:compl-one-quot-of-cluster-type}, which relates complexity one to cluster type varieties. First, we prove Theorem \ref{introthm:fine-compl-one-cluster-type}. Our proof is inspired by the proof of ~\cite[Theorem 1.2]{Mor24}.
	
	\begin{proof}[Proof of Theorem~\ref{introthm:fine-compl-one-cluster-type}]
		Write $n=\dim X.$ To show that $(X,B)$ is cluster type, it suffices by Lemma ~\ref{lem:check-cluster-type-on-bir-model} to show that $(X_0,B_0)$ is cluster type for any log Calabi--Yau pair (necessarily of index one) admitting a crepant birational map $(X_0,B_0)\dashrightarrow (X,B)$ that extracts only log canonical places of $(X,B).$ It follows from ~\cite[Theorem 5.6]{mor_con24} and its proof that there exists a diagram 
		\[
		\xymatrix{
			(X,B) & (X_0,B_0) \ar@{-->}[l]_-{\phi} \ar[r]^-{\psi_0} & (X_1,B_1+\mathbf{M}_1)\ar[r]^-{\psi_1} & \hdots \ar[r]^-{\psi_{r-1}} & (X_r,B_r+\mathbf{M}_r) \ar[r] & V
		}
		\]
		and a subset $I\subset \{0,\hdots, r-1\}$ satisfying the following conditions:
		\begin{enumerate}
			\item[(i)] $\phi$ is crepant birational and extracts only log canonical places of $(X,B),$
			\item[(ii)] each $\psi_i$ is a Mori fiber space,
			\item[(iii)] each $X_i$ is of Fano type over $V,$
			\item[(iv)] $|I|\geq n-1,$
			\item[(v)] each $\psi_i$ with $i\in I$ is a conic fibration,
			\item[(vi)] if $i\in I,$ then $\lvert\lfloor B_{i,{\rm horiz}} \rfloor\rvert\geq 2.$
		\end{enumerate}
		Here, the $(X_i,B_i+\mathbf{M}_i)$ are the generalized pairs induced by the canonical bundle formula. Since each $\psi_i$ is a fibration and $|I|\geq n-1,$ it follows that each $\psi_i$ must have relative dimension one. Thus, we have $\dim X_{i}=n-i$ for each $0\leq i \leq r.$ In particular, $X_{n-1}$ is a rationally connected curve and hence must be isomorphic to $\mathbb{P}^1.$ If $I=\{0,\hdots, n-1\},$ we set $j_1=n$ and $j_2=-1.$ Otherwise, we set $j_1=j_2$ to be the unique index in $\{0,\hdots, n-1\}\setminus I.$ In the event that $r=n-1,$ we set $X_n=\Spec \kk$, $B_n=0$ and $\mathbf{M}_n=0$.\par
		We note that the hypotheses of Lemma ~\ref{lem:q-fact-tower} are satisfied. Thus, we may assume that:
		\begin{enumerate}
			\item[(vii)] each $X_i$ is $\qq$-factorial,
			\item[(viii)] $\psi_i^{-1}(\lfloor B_{i+1}\rfloor)\subset \lfloor B_i\rfloor$ for each $0\leq i \leq r-1.$
		\end{enumerate}
		Since each $\psi_i$ is of relative dimension one, it follows that no horizontal divisors can be contracted upon application of Lemma ~\ref{lem:q-fact-tower}. Thus, condition (vi) is preserved.
		\par

		We will show that $(X,B)$ is cluster type by showing that $(X_0,B_0)$ is a toric log Calabi--Yau pair. We begin by claiming that all moduli b-divisors $\mathbf{M}_i$ are trivial as b-divisors. We first show, for each $0\leq i\leq \min\{j_1,n-1\}$, that $\mathbf{M}_i$ is trivial as b-divisor and that $(X_i,B_i)$ is a log Calabi--Yau pair of index one. The case $i=0$ is clear, and the case $i$ implies the case $i+1$ by Lemma ~\ref{lem:no-moduli-gen-finite-lcc} and the fact that conditions (v) and (vi) imply that the components of $B_{i-1,{\rm horiz}}$ map birationally onto $X_i.$ If $j_1=n,$ then we have proven the claim. Otherwise, it follows from Lemma ~\ref{lem:conic-fib-horiz-divs} and the fact that $(X_{j_1},B_{j_1})$ is a log Calabi--Yau pair of index one that we must have $|\lfloor B_{j_1,{\rm horiz}}\rfloor|=1$. Thus, we may conclude from conditions (v) and (vi) and further applications of Lemma ~\ref{lem:no-moduli-gen-finite-lcc} that the moduli b-divisors $\mathbf{M}_i$ for $j_1<i\leq n-1$ are also trivial as b-divisors. \par
		We claim now that $B_i=\lfloor B_i\rfloor$ and $c(X_i,B_i)=0$ for each $j_2<i\leq n-1.$ We start with the case $i=n-1.$ If $j_1\neq n-1,$ then this follows from the fact that $X_{n-1}\cong \pp^1$ and that $|\lfloor B_{n-1}\rfloor|\geq 2$ by condition (vi). If $j_1=n-1,$ then this follows from the fact that $X_{n-1}\cong\pp^1$ and that $(X_{n-1},B_{n-1})$ is a log Calabi--Yau pair of index one. The case $i$ implies the case $i-1$ by noting that  
		$$c(X_{i-1},\lfloor B_{i-1}\rfloor)\leq (\dim X_i+1) + (\rho(X_i)+1)-(|B_i|+2) \leq 0.$$
		Indeed, we have that $\rho(X_{i-1})=\rho(X_i)+1$ by condition (ii), that $\dim X_{i-1}=\dim X_i+1$ by condition (v), that $|\lfloor B_{i-1,{\rm horiz}}\rfloor |\geq 2$ by condition (vi), and that $|\lfloor B_{i-1,{\rm vert}}\rfloor |\geq |B_i|$ by condition (viii). Therefore, we must have $B_i=\lfloor B_{i-1}\rfloor$ and $c(X_{i-1},B_{i-1})=0$ by Theorem ~\ref{introthm:BMSZ18}. \par
		In the event that $j_2=-1,$ we have just shown that $c(X_0,B_0)=0.$ So assume that $j_2\neq -1$, hence that $j_2=j_1.$ We showed above that $(X_{j_2},B_{j_2})$ is a log Calabi--Yau pair of index one, so it follows from Lemma ~\ref{lem:conic-fib-horiz-divs} that we must have $|\lfloor B_{j_2,{\rm horiz}}\rfloor|\geq 1$. The arguments of the previous paragraph show that $c(X_{j_2},B_{j_2})\in \{0,1\}.$ We cannot have $c(X_{j_2},B_{j_2})=1$ by Lemma ~\ref{lem:compl-one-horiz-divs}, so we must have $c(X_{j_2},B_{j_2})=0.$ Further applications of the arguments of the previous paragraph now show that $c(X_i,B_i)=0$ for each $0\leq i < j_2.$ In particular, $(X_0,B_0)$ is a toric log Calabi--Yau pair by Theorem \ref{introthm:BMSZ18}.
		
	\end{proof}
	
	We obtain Theorem \ref{introthm:compl-one-quot-of-cluster-type} as a consequence of Theorem \ref{introthm:abs-compl-cA} and Theorem \ref{introthm:fine-compl-one-cluster-type}.
	
	\begin{proof}[Proof of Theorem~\ref{introthm:compl-one-quot-of-cluster-type}]
		It suffices to show the result when $X$ is $\qq$-factorial. Indeed, if $f\colon X'\rightarrow X$ is a small birational morphism and $g'\colon Y'\rightarrow X'$ is a finite morphism, then Stein factorization of the composite $f\circ g'$ gives us a commutative diagram 
		\[
		\xymatrix{ 
			Y'\ar[d]_-{g'} \ar[r]^-{h} & Y\ar[d]^-{g} \\ 
			X' \ar[r]^-{f} & X
		}
		\]
		with $h$ a small birational morphism and $g$ a finite morphism of degree equal to that of $g'.$ In particular, $Y$ will be cluster type if and only if $Y'$ is.\par
		By Theorem ~\ref{introthm:abs-compl-cA}, $X$ is a Fano type variety and $\overline{X}$ has a cA-type singularity at its vertex. As in the proof of Theorem ~\ref{introthm:abs-compl-cA}, we have a commutative diagram  
		\[
		\xymatrix{ 
			\overline{X} \ar@{^{(}->}[r]^-{\overline{j}}& \mathbb{A}^{d+1}=\overline{T}\\ 
			\widehat{X}\ar[u]^-{i_X} \ar[d]_-{p_X} \ar@{^{(}->}[r]^-{\widehat{j}} & \widehat{T}\ar[u]_-{i_T}\ar[d]^-{p_T} \\ 
			X \ar@{^{(}->}[r]^-{j} &
			T
		}
		\]
		satisfying the following conditions:
		\begin{enumerate}
			\item[(i)] the morphism $\overline{j}$ is the inclusion of a hypersurface defined by a ${\rm Cl}(X)$-homogeneous equation $f\in \kk[x_1,\hdots, x_{d+1}]={\rm Cox}(T)$ as in Lemma ~\ref{lem:cA-cox},
			\item[(ii)] $T$ is a projective toric variety on which $X={\rm div}_{T}(f).$ 
		\end{enumerate}
		Write $D_i={\rm div}_T(x_i)$ for $1\leq i \leq d+1.$ Thus, $\sum_{i=1}^{d+1}D_i$ is the toric boundary on $T.$ It was shown in the proof of Theorem ~\ref{introthm:abs-compl-cA} that $(T, X+\sum_{i=3}^{d+1}D_i)$ is log canonical.
		
		First, suppose that $f$ contains the monomial $x_1x_2,$ then $X\sim D_1+D_2$ and it follows that $(T, X+\sum_{i=3}^{d+1}D_i)$ is a log Calabi--Yau pair of index one. Denoting by $(X,B)$ the pair obtained by adjunction to $X,$ it follows that $(X,B)$ is a log Calabi--Yau pair of index one with $c(X,B)=1.$ Since $X$ is of Fano type, it is rationally connected by ~\cite[Theorem 1]{Zha06}. Thus, it follows from Theorem ~\ref{introthm:fine-compl-one-cluster-type} that $(X,B)$ is cluster type, hence that $X$ is cluster type.\par
		
		Now, assume that $f$ does not contain the monomial $x_1x_2.$ Thus, $f$ contains the monomial $x_1^2+x_2^2$ and $E={\rm deg}(x_1)-{\rm deg}(x_2)$ is a nonzero $2$-torsion element of ${\rm Cl}(X)={\rm Cl}(T).$ Let $H'=\Spec \kk[{\rm Cl}(X)/ E]$, and denote by $S=\widehat{T}/H'$ and $Y=\widehat{X}/H'.$ We obtain a commutative diagram 
		\[
		\xymatrix{ 
			Y \ar[d]_-{\pi_X} \ar[r] & S\ar[d]^-{\pi_T} \\ 
			X \ar@{^{(}->}[r]^-{j} &
			T,
		}
		\]
		and the following conditions hold:
		\begin{enumerate}
			\item[(i')] $S$ is a $\qq$-factorial projective toric variety with ${\rm Cl}(S)={\rm Cl}(T)/E,$ and ${\rm Cox}(S)={\rm Cox}(T)$ with ${\rm Cl}(S)$-grading obtained by coarsening the ${\rm Cl}(T)$-grading, and
			\item[(ii')] $Y$ is the normalization of ${\rm div}_S(f)\subset S,$
			\item[(iii')] $\pi_T$ and $\pi_X$ are quasi-\'etale morphisms of degree $2.$ 
		\end{enumerate}
		Denoting by $D_{S,i}={\rm div}_S(x_i)$ for $1\leq i \leq d+1,$ it follows as in the proof of Theorem ~\ref{introthm:abs-compl-cA} that $(S,Y+\sum_{i=3}^{d+1}D_{S,i})$ is log canonical. Since $E={\rm deg}(x_1)-{\rm deg}(x_2)$ is zero in ${\rm Cl}(S),$ it follows that $S\sim D_1+D_2$ and hence that $(S, Y+\sum_{i=3}^{d+1}D_{S,i})$ is a log Calabi--Yau pair of index one. We obtain a log Calabi--Yau pair $(Y,B_Y)$ of index one by performing adjunction to $Y.$ We note that the finite morphism $\pi_X$ is crepant with respect to the pairs $(Y,B_Y)$ and $(X,B).$ In particular, it follows from ~\cite[Lemma 2.17]{mor_con24} and the fact that $X$ is of Fano type that $Y$ is of Fano type as well. Since $(X,B)$ is a log Calabi--Yau pair with $c(X,B)=1,$ it follows from Theorem ~\ref{introthm:compl-less-than-2} that the components of $B$ span a subspace of $N^1(X)$ of dimension $|B|+1-\dim X.$ Since $B_Y=f^*B_X$, it follows that $\overline{c}(Y,B_Y)=1.$ Since $Y$ is of Fano type, it is rationally connected by ~\cite[Theorem 1]{Zha06}. Thus, it follows from Theorem ~\ref{introthm:fine-compl-one-cluster-type} that $(Y,B_Y)$ is cluster type, hence that $Y$ is cluster type.
	\end{proof}
	
	\begin{remark}
		{\em 
			The example of ~\cite[Section 7]{BMSZ18} shows that it is necessary in general to allow for double covers.
		}
	\end{remark}
	
	\section{\texorpdfstring{$T$}{T}-varieties and Cluster Type}
	
	In this final section, we prove Theorem \ref{introthm:t-compl-one}. We refer the reader to \cite{AIPSV12} and references therein for definitions and results about $T$-varieties.
	
	\begin{proof}[Proof of Theorem ~\ref{introthm:t-compl-one}]
		By ~\cite[Theorem 5.6]{AHS08}, the variety $X$ arises from a \textit{divisorial fan} $\mathcal{S}=\{\mathcal{D}^i\mid i\in I\}$ on a smooth projective curve $C$ as in ~\cite[Definition 5.2]{AHS08}. For each $i\in I,$ we have a diagram 
		\[
		\xymatrix{ 
			\widetilde{X}(\mathcal{D}^i) \ar[d]_-{\pi_i} \ar[r]^-{r_i} & X(\mathcal{D}^i) \\ 
			C  &
		}
		\]
		in which the morphism $r_i$ is projective, birational and $\mathbb{T}$-equivariant and the morphism $\pi_i$ is $\mathbb{T}$-invariant. These varieties and morphisms glue together to give a diagram 
		\[
		\xymatrix{ 
			\widetilde{X} \ar[d]_-{\pi} \ar[r]^-{r} & X \\ 
			C  &
		}
		\]
		in which the morphism $r$ is projective, birational and $\mathbb{T}$-equivariant and the morphism $\pi$ is $\mathbb{T}$-invariant. Moreover, there is a nonempty open $U\subset C$, a toric variety $F$ with acting torus $\mathbb{T}$ and a $\mathbb{T}$-equivariant isomorphism $\pi^{-1}(U)\cong U\times F$ over $U.$\par 
		As in the proof of ~\cite[Theorem 3.25]{Mor22}, we may choose a $\mathbb{T}$-invariant complement $(X,B).$ Let $(\widetilde{X},\widetilde{B})$ denote the log pullback of $(X,B)$ via $r.$ Performing adjunction to a general fiber $F$ of $\pi,$ we obtain a $\mathbb{T}$-invariant log Calabi--Yau sub-pair $(F,B_F)$ on $F.$ It follows that $B_F$ must be the toric boundary of $F.$ In particular, every component of $B_F$ has coefficient one. It follows from ~\cite[Proposition 3.13]{PS11} that every horizontal $\mathbb{T}$-invariant divisor on $\widetilde{X}$ appears in $\widetilde{B}$ with coefficient one. It follows from ~\cite[Theorem 10.1]{AH06} that $r$ contracts no divisors on $\widetilde{X}$ that are vertical over $C.$ In other words, every $r$-exceptional divisor appears in $\widetilde{B}$ with coefficient one. It follows that $\widetilde{B}$ is effective, and by ~\cite[Lemma 2.23]{Mor21} that $\widetilde{X}$ is Fano type. It now follows from ~\cite[Theorem 5.2]{FG12} that $C=\pp^1.$\par
		Since $\widetilde{X}$ is Fano type, and therefore log terminal, by taking a $T$-equivariant small $\qq$-factorialization, we may assume that $\widetilde{X}$ is $\qq$-factorial. The pair $(\widetilde{X},\widetilde{B}_{\rm horiz})$ is log Calabi--Yau over $C$. Denote by $(C,\widetilde{B}_C)$ the pair induced from $(\widetilde{X},\widetilde{B}_{\rm horiz})$ on $C$ via the canonical bundle formula. We note that the moduli part is trivial here by Lemma ~\ref{lem:no-moduli-gen-finite-lcc} since the pair $(\widetilde{X},\widetilde{B}_{\rm horiz})$ has one-dimensional log canonical centers that are horizontal over $C.$ The pair $(C,\widetilde{B}_C)$ has standard coefficients by ~\cite[Remark 3.1.4]{Amb99}, hence for each $p\in C$ there is an integer $m_p\geq 1$ such that ${\rm coeff}_{p}(\widetilde{B}_C)=1-\frac{1}{m_p}.$ We claim that for each $p\in C,$ there exists a component of the scheme-theoretic fiber $\pi^{-1}(p)$ with multiplicity $m_p.$ Denote by $m'_p$ the maximal multiplicity of a fiber in $\pi^{-1}(p).$ To show the claim, it will suffice to show that $(\widetilde{X},\widetilde{B}_{\rm horiz}+\frac{1}{m'_p}\pi^{-1}(p))$ is log canonical. By definition of $m'_p,$ this will follow if we show that $(\widetilde{X},\widetilde{B}_{\rm horiz}+\pi^{-1}(p)_{\rm red})$ is log canonical. Since $C=\pp^1,$ it follows from ~\cite[Example 2.5]{LS13} that there is an affine open neighborhood of $p$ in $C$ over which the pair $(\widetilde{X},\widetilde{B}_{\rm horiz}+\pi^{-1}(p)_{\rm red})$ is isomorphic to the restriction of a toric pair. This implies the desired log canonicity.\par
		We claim that $K_C+\widetilde{B}_C$ is not numerically trivial. To show this claim, we will assume that it is numerically trivial and will obtain a contradiction. Since $\widetilde{X}$ is a Fano type variety, it follows from the previous paragraph that we may run a sequence of relative MMPs over $C$ to obtain a commutative diagram 
		\[
		\xymatrix{  
			\widetilde{X} \ar[rd]_-{\pi} \ar@{-->}[rr]^-{f} & & X' \ar[ld]^-{\pi'}\\
			& C &
		}
		\]
		in which $f$ is a contracting birational map and such that, for each $p\in C$, $\pi'^{-1}(p)$ is irreducible with multiplicity $m_p.$ These MMPs are automatically $\mathbb{T}$-equivariant. In particular, $X'$ carries an effective $\mathbb{T}$-action. Consider the index one cover $\psi\colon E\rightarrow C$ of $K_C+\widetilde{B}_C.$ The curve $E$ has genus one and, for each $p\in C,$ the fiber $\psi^{-1}(p)$ is a disjoint union of points of multiplicity $m_p$. We note that $X'\times_C E$ is irreducible since it is flat over $E$ and is isomorphic to $\psi^{-1}(U)\times F$ over $\psi^{-1}(U)$ for some nonempty open $U\subset C.$ Let $Y$ denote the normalization of $X'\times_C E$ with its reduced structure. We have a commutative diagram
		\[
		\xymatrix{ 
			X' \ar[d]_-{\pi'} & Y \ar[d]^-{\pi''} \ar[l]_-{\phi} \\ 
			C & E \ar[l]_-{\psi}
		}
		\]
		with $\phi$ finite and surjective and $\pi''$ surjective. For each $q\in E,$ we have that the ramification index of $\psi$ at $q$ is equal to the multiplicity $m_{\psi(p)
		}.$ From this, it follows that the morphism $\phi$ is quasi-\'etale. In particular, it follows that $Y$ is Fano type. This implies, by ~\cite[Theorem 5.2]{FG12}, that $E$ must be Fano type. But $E$ has genus one, so this is nonsense. Having obtained the desired contradiction, we conclude that $K_C+\widetilde{B}_C$ is not numerically trivial.\par
		The pair $(C,\widetilde{B}_C)$ is a Fano pair on $C=\pp^1$ with standard coefficients. We first consider the case that $\Supp(\widetilde{B}_C)$ contains at most two points. Thus, there exist distinct points $p,q\in C$ with $\widetilde{B}_C\leq \widehat{B}_C:=p+q.$ Writing $\widehat{B}=\widetilde{B}_{\rm horiz}+\pi^*(\widehat{B}_C-\widetilde{B}_C)$ and $\widehat{B}'=f_*\widehat{B}=f_*\widetilde{B}_{\rm horiz}+\pi'^*(\widehat{B}_C-\widetilde{B}_C)$, it follows that $(\widetilde{X},\widehat{B})$ and $(X',\widehat{B}')$ are log Calabi--Yau pairs with respect to which $f$ is crepant. Note that every component of $\widehat{B}'$ has coefficient one. Since every fiber of $\pi'$ is irreducible and is isomorphic over some noempty open $U\subset C=\pp^1$ to the projection $U\times F\rightarrow U$, it follows that $\rho(X')=\rho(F)+1.$ Since the horizontal components of $\widehat{B}'$ are in bijection with the components of the toric boudnary of $F$, it follows that $|\widehat{B}'|=|B_F|+2=\dim F + \rho(F) + 2 = \dim X' + \rho(X').$ Thus, $(X',\widehat{B}')$ is a toric log Calabi--Yau pair by Theorem ~\ref{introthm:BMSZ18}. Since $f\colon \widetilde{X}\dashrightarrow X'$ is a contracting birational map, it follows that $(\widetilde{X},\widehat{B})$ is cluster type. Since $r\colon \widetilde{X}\rightarrow X$ contracts only divisors that are horizontal over $C$, all of which appear in $\widehat{B}$ with coefficient one, it follows that $(X,r_*\widehat{B})$ is cluster type. To prove the theorem in this case, we may take $Y=X$ and $B_Y=r_*\widehat{B}.$\par
		From now on, we assume that $\Supp(\widetilde{B}_C)$ contains more than two points. Since $(C,\widetilde{B}_C)$ is a Fano pair on $C=\pp^1$ with standard coefficients, we are left with the following possibilities:
		\begin{enumerate}
			\item[$(D_n)$] $\widetilde{B}_C=\frac{1}{2}p+\frac{1}{2}q+\frac{n-1}{n}r,$
			\item[$(E_6)$] $\widetilde{B}_C=\frac{1}{2}p+\frac{2}{3}q+\frac{2}{3}r,$
			\item[$(E_7)$] $\widetilde{B}_C=\frac{1}{2}p+\frac{2}{3}q+\frac{3}{4}r,$
			\item[$(E_8)$] $\widetilde{B}_C=\frac{1}{2}p+\frac{2}{3}q+\frac{4}{5}r.$
		\end{enumerate}
		Here, $p,q,r\in C$ are distinct points. In the case of $(D_n),$ consider a degree $2$ morphism $\psi\colon C'=\pp^1\rightarrow C$ branched at $p$ and $q.$ Denoting by $r_1,r_2\in C'$ the preimages of $r,$ we see that $\psi$ is crepant with respect to $(C',\widetilde{B}_{C'}=\frac{n-1}{n}(r_1+r_2))$ and $(C,\widetilde{B}_C).$ In the cases $E_n,$ we let $\gamma\colon C'=\pp^1\rightarrow C$ be the universal cover of the smooth orbifold $(C,\widetilde{B}_C)$ as in ~\cite[Corollary 1.1]{Clau08}. In this case, the fundamental group $\pi_1(C/\widetilde{B}_C)$ of the smooth orbifold $(C,\widetilde{B}_C)$ as in ~\cite[Definition 1.2]{Clau08} has presentation $\langle \gamma_1,\gamma_2\mid \gamma_1^2=\gamma_2^3=(\gamma_1\gamma_2)^{n-3}=1\rangle$. Here, $\gamma_1$ is a small loop $p$, $\gamma_2$ is a small loop around $q,$ and $\gamma_1\gamma_2$ is homotopic in $C\setminus\{p,q,r\}$ to a loop around $r.$ Using these presentations to identify $\pi_1(C/\widetilde{B}_C)$ with $A_4,$ $S_4$, $A_5$ when $n=6,7,8$, respectively, we see that ${\rm ord}(\gamma_1)=2,$ ${\rm ord}(\gamma_2)=3$ and ${\rm ord}(\gamma_1\gamma_2)=n-3.$ It follows from ~\cite[Proposition 1.1]{Clau08} that $\psi$ has ramification indices of $2,$ $3,$ $n-3$ over $p,$ $q$, $r$, respectively, and is \'etale elsewhere. In particular, $\psi$ is crepant with respect to the pairs $(C',\widetilde{B}_{C'}=0)$ and $(C,\widetilde{B}_C).$ \par
		As above, we note that $X'\times_C C'$ is irreducible. Let $Y'$ denote the normalization of $X'\times_C C'$ with its reduced structure. We have a commutative diagram
		\[
		\xymatrix{ 
			X' \ar[d]_-{\pi'} & Y' \ar[d]^-{\pi_{Y'}} \ar[l]_-{\phi'} \\ 
			C & C' \ar[l]_-{\psi}.
		}
		\]
		For each $t\in C',$ we have seen that the ramification index of $\psi$ at $t$ is equal to the multiplicity of the fiber $\pi'^{-1}(\psi(t)).$ It follows that the morphism $\phi$ is quasi-\'etale. Let $\widetilde{Y}$ denote the normalization of $\widetilde{X}\times_CC'$ with its induced structure. As above, $\widetilde{Y}$ is irreducible. Moreover, the induced map $h\colon \widetilde{Y}\dashrightarrow Y'$ is a contracting birational map. Denote by $\widetilde{\phi}\colon \widetilde{Y}\rightarrow \widetilde{X}$ the induced morphism and by $$\widetilde{Y}\xrightarrow{s}Y\xrightarrow{\phi}X$$ the Stein factorization of $\widetilde{Y}\xrightarrow{\widetilde{\phi}}\widetilde{X}\xrightarrow{r}X.$ We obtain a commutaive diagram 
		\[
		\xymatrix{ 
			X & Y \ar[l]_-{\phi} \\ 
			\widetilde{X} \ar@/_2pc/[dd]_-{\pi} \ar@{-->}[d]_-{f} \ar[u]^-{r} & \widetilde{Y} \ar@/^2pc/[dd]^-{\pi_{\widetilde{Y}}} \ar@{-->}[d]^-{h} \ar[u]_-{s} \ar[l]_-{\widetilde{\phi}} \\ 
			X' \ar[d]_-{\pi'} & Y' \ar[d]^-{\pi_{Y'}} \ar[l]_-{\phi'} \\ 
			C & C' \ar[l]_-{\psi}.
		}
		\]
		Since both $\pi$ and $\pi'$ are $\mathbb{T}$-invariant morphisms, both $\widetilde{Y}$ and $Y'$ effective inherit $\mathbb{T}$-actions in such a way that $\widetilde{\phi},$ $\phi',$ $h$ are $\mathbb{T}$-equivariant and $\pi_{\widetilde{Y}},$ $\pi_{Y'}$ are $\mathbb{T}$-invariant. Since $r\circ \widetilde{\phi}$ is $\mathbb{T}$-equivariant, it follows that there exists a $\mathbb{T}$-action on $Y$ such that the factors $s$ and $\phi$ of its Stein factorization are $\mathbb{T}$-equivariant.
		
		Since $\phi'$ is quasi-\'etale, it follows that $Y'$ is Fano type and that $\phi'$ is crepant with respect to the pairs $(X', B'_{\rm horiz}=f_*\widetilde{B}_{\rm horiz})$ and $(Y',B_{Y',{\rm horiz}}=\phi'^*B'_{\rm horiz})$. As above, we may assume that $Y'$ is $\qq$-factorial, and we may run a $\mathbb{T}$-equivariant MMP over $C'$ to obtain a commutative diagram 
		\[
		\xymatrix{  
			Y' \ar[rd]_-{\pi_{Y'}} \ar@{-->}[rr]^-{k} & & Y'' \ar[ld]^-{\pi_{Y''}}\\
			& C' &
		}
		\]
		in which $k$ is a contracting birational map and such that, for each $p\in C'$, $\pi_{Y''}^{-1}(p)$ is irreducible with multiplicity $\frac{1}{1-{\rm coeff}_p(\widetilde{B}_{C'})}.$ We have seen that the support of $B_{C'}$ consists of at most two points. Thus, there exist distinct points $p,q\in C'$ with $\widetilde{B}_C'\leq B_{C'}=p+q$. It follows that $\widehat{B}_{Y''}=B_{Y'',{\rm horiz}}+\pi_{Y''}^*B_{C'}$ is a reduced divisor on $Y''$ such that $(Y'', \widehat{B}_{Y''})$ is log Calabi--Yau. As above, we compute $|\widehat{B}_{Y''}|=\dim Y''+\rho(Y'')$, so it follows from Theorem ~\ref{introthm:BMSZ18} that $(Y'',\widehat{B}_{Y''})$ is a toric log Calabi--Yau pair. The acting torus can be identified with ${\rm Aut}^0(Y'',\widehat{B}_{Y''}),$ and we have an inclusion $\mathbb{T}\hookrightarrow {\rm Aut}^0(Y'',\widehat{B}_{Y''})$ since $\widehat{B}_{Y''}$ is $\mathbb{T}$-invariant. Denoting by $U''=Y''\setminus \widehat{B}_{Y''}$ the open orbit of ${\rm Aut}^0(Y'',\widehat{B}_{Y''}),$ it follows that $U''$ is a $\mathbb{T}$-invariant open on which $\mathbb{T}$ acts freely. The birational map $k\circ h\colon \widetilde{Y}\dashrightarrow Y''$ is $\mathbb{T}$-equivariant and is surjective in codimension one. Thus, there is a $\mathbb{T}$-invariant open $V''\subset Y''$ with complement of codimension at least two in $Y''$ on which $(k\circ h)^{-1}$ is defined as an open immersion $V''\hookrightarrow \widetilde{Y}.$ It follows that $V''\cap U''$ is a $\mathbb{T}$-invariant open whose complement has codimension at least two in $U''.$ Since $\dim \mathbb{T}=\dim U''-1$ and $\mathbb{T}$ acts freely on $U'',$ it follows that all $\mathbb{T}$-orbits in $U''$ are divisors in $U''.$ It follows that we must have $U''\subset V'',$ hence we have an open immersion $U''\hookrightarrow \widetilde{Y}$. Denoting by $(\widetilde{Y},\widehat{B}_{\widetilde{Y}})$ the log pullback of $(Y'', \widehat{B}_{Y''})$, we note that the image of this embedding lies in the complement of the support of $\widehat{B}_{\widetilde{Y}}.$ Notice that $s$ contracts only divisors that are horizontal over $C',$ and every such divisor has divisorial center on $Y''.$ It follows that $s$ contracts no divisor in $U''.$ Since $s$ is $\mathbb{T}$-equivariant, it follows as above that $s$ restricts to an open immersion on $U''$ with image contained in the complement of the support of $B_Y=s_*\widehat{B}_{\widetilde{Y}}.$ 
	\end{proof}

	\bibliographystyle{habbvr}
	\bibliography{references}
	
\end{document}